\theoremstyle:=definition,remark,plain\do{\expandafter\g@addto@macro\csname th@\theoremstyle\endcsname{
\addtolength\thm@preskip\parskip}}
\renewenvironment{abstract}
 { \normalsize
  \list{}{\setlength{\leftmargin}{.0cm}%
    \setlength{\rightmargin}{\leftmargin}}%
  \item {\bf \abstractname.}\relax}
 {\endlist}
\titlespacing*{\paragraph}{0pt}{3.25ex plus 1ex minus .2ex}{0.5ex plus .2ex}
\definecolor{dnrbl}{rgb}{0,0,0.3}
\definecolor{dnrgr}{rgb}{0,0.3,0}
\definecolor{dnrre}{rgb}{0.5,0,0}
\theoremstyle{plain}
\newtheorem{thm}{Theorem}[section]
\newtheorem{prop}[thm]{Proposition}
\newtheorem*{propa}{Proposition}
\newtheorem{lem}[thm]{Lemma}
\theoremstyle{definition}
\newtheorem{defi}[thm]{Definition}
\newcommand{\Nat}{\mathbb{N}}
\newcommand{\restr}{\upharpoonright}  
\newcommand{\brag}[1]{{\langle #1\rangle}}
\newcommand{\sqbrad}[2]{\{\hspace{0.03cm}{#1} : {#2}\hspace{0.03cm}\}}
\DeclarePairedDelimiter{\dbra}{\llbracket}{\rrbracket}
\newcommand{\CC}{\mathcal{C}}
\newcommand{\abs}[1]{|{#1}|}
\newcommand{\parb}[1]{\big({#1}\big)}
\newcommand{\parB}[1]{\Big(\hspace{0.04cm}{#1}\hspace{0.04cm}\Big)}
\newcommand{\parlr}[1]{\left({#1}\right)}
\newcommand{\ou}{oracle-use\ }
\newcommand{\TT}{\mathcal{T}}
\newcommand{\ml}{Martin-L\"{o}f }
\newcommand{\pz}{$\Pi^0_1$\ }
\newcommand{\ie}{i.e.\ }
\newcommand{\ce}{c.e.\ }
\newcommand{\pf}{prefix-free }
\newcommand{\GG}{\mathcal G}
\newcommand{\FF}{\mathcal F}
\newcommand{\twome}{2^{\omega}}
\newcommand{\zp}{\mathbf{0}'}
\newcommand{\twomel}{2^{<\omega}}
\newcommand{\wedga}{\ \wedge\ \ }
\newcommand{\PA}{\mathsf{PA}}
\newcommand{\hthree}{\hspace{0.3cm}}
\newcommand{\Hs}{\mathsf{H}}
\newcommand{\hit}{{\rm hit}}
\title{Growth and irreducibility in path-incompressible trees\thanks{
Supported by NSFC-11971501 and partially by the  
program of the Institute for Mathematical Sciences, National University of Singapore, 2021. We thank Wang Wei for corrections in early drafts of this work.}}
\author{George Barmpalias}\author{Xiaoyan Zhang} 
\affil{State Key Lab of Computer Science, Inst.\ of Software, Chinese Acad.\ of Sciences, Beijing, China\vspace{0.1cm}}
\begin{document}
\maketitle
\begin{abstract}
We study randomness-preserving transformations of path-incompressible trees, namely trees of finite randomness deficiency. 
We characterize their  branching density, and show: 
(a) sparse perfect path-incompressible trees can be effectively densified, almost surely;
(b) there exists a path-incompressible tree with infinitely many paths which does not compute any 
 perfect path-incompressible tree with computable oracle-use.
\end{abstract}
\section{Introduction}\label{1O8WCJrzWz}
Algorithmic randomness  appears in different forms: infinite bit-sequences ({\em reals}), 
arrays, trees, and structures.
It is often essential to
effectively transform one form into another, without sacrificing algorithmic complexity:  
transforming  a real which is random with respect to a Bernoulli distribution into a uniformly random real
goes back to \citet{vonNeumann1951}, and 
general cases, including non-computable distributions,  have been explored  \citep{BienvenuCoin2012}. 

We study the hardness of transformations of closed sets of random points (or trees with incompressible paths) 
with respect to dimensionality features: branching and  accumulation points.
We use $\sigma,\tau,\rho,\eta$  for  bit-strings,  $x,y,z$  for reals, and
let $K(\sigma)$ be the \pf Kolmogorov complexity of  
$\sigma$:  the length of the shortest self-delimiting  program generating $\sigma$.
\begin{defi}
The (randomness) {\em deficiency} of $\sigma$ is $|\sigma|-K(\sigma)$. 
The {\em deficiency} of a set of strings  is the supremum of the deficiencies of its members. 
The deficiency of $x$  is the deficiency of the set of its prefixes.
\end{defi}
A real  is  {\em random} if it has finite deficiency; this is equivalent to the  notion of \citet{MR0223179}.
A set  of strings is a {\em tree} if it is downward-closed with respect to the prefix relation $\preceq$. 
A {\em path through tree $T$} is a real with all its prefixes in $T$.
A tree $T$  is 
\begin{itemize}
\item {\em perfect} if each $\sigma\in T$ has at least two $\preceq$-incomparable strict extensions in $T$ 
\item {\em pruned} if each $\sigma\in T$ has at least one proper extension in $T$
\item {\em proper} if it has infinitely many paths; {\em positive} if the measure of  its paths is positive
\item {\em effectively proper} there is an unbounded computable lower bound on $\abs{T\cap 2^n}$
\item {\em path-incompressible} if it is pruned and has finite deficiency, as a set of strings
\item  {\em path-random} if it is pruned and all of its paths are 
random\footnote{all infinite trees we consider are assumed to be pruned, hence representing  closed sets  of $\twome$.}
\end{itemize}
where $2^n$ denotes the set of $n$-bit strings.
Path-random trees are not always path-incompressible:
it is possible that $T$ is path-random and $T$ has infinite deficiency.  
On the other hand by \citep{deniscarlsch21,treeout} 
every perfect path-random tree computes a path-incompressible tree.

Our purpose  is to determine when randomness can be effectively manipulated with respect
to topological or density characteristics (length distribution, accumulation points etc.)

To this end, we ask: within the path-incompressible trees 
\begin{enumerate}[\hspace{0.3cm}(a)]
\item can every tree compute a proper  tree?
\item can every perfect  tree compute a positive  tree?
\item can every proper  tree compute a perfect  tree?
\item can every sparse perfect  tree be effectively transformed into a denser tree?
\end{enumerate}
These questions 
are  tree-analogues of problems of randomness extraction such as effectively
increasing the Hausdorff dimension of  reals \citep{dimiller}, except that the increase is now  on the structural density
of the tree, without loss in the algorithmic complexity of its paths.

\citet[Theorem 5.3, Theorem 7.7]{bslBienvenuP16} 
showed that incomplete  randoms do not compute effectively-proper path-incompressible trees.
\citet{deniscarlsch21} showed that sufficiently random reals do not compute perfect path-random trees.
A complete  answer to (a) was given in 
\citep{treeout}:  incomplete randoms do not compute perfect path-random trees.

A negative answer to question (b)
was given in \citep{treeout}: there exists a
perfect path-incompressible tree which cannot computably enumerate any positive path-random tree.
Much of this work was motivated by the study of compactness  in fragments of second-order arithmetic.

\subsection*{Outline of our results}
Our main contribution is a partial negative answer to question (c): 

\begin{thm}\label{dad73d1d}
There is a path-incompressible effectively proper tree that does not compute any path-random perfect tree with a computable upper bound on the
oracle-use.
\end{thm}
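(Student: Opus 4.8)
I would first observe that a Turing reduction whose oracle-use is bounded by a computable function is a weak-truth-table (wtt) reduction, so the task is to build an effectively proper path-incompressible tree $T$ that wtt-computes no path-random perfect tree. Enumerate all pairs $(\Psi_e,g_e)$, with $\Psi_e$ a Turing functional outputting a set of strings and $g_e$ the $e$-th partial computable function, and aim to meet
\[
R_e:\quad \text{if } g_e\text{ is total and bounds the oracle-use of }\Psi_e^T,\text{ then }\Psi_e^T\text{ is not a path-random perfect tree.}
\]
Two remarks shape the strategy. First, $R_e$ is met for free whenever $\Psi_e^T$ fails to be a tree, fails to be pruned, or is a pruned tree that simply fails to be perfect; so one only ever has to \emph{act} when $\Psi_e^T$ threatens to be a perfect pruned tree with random paths. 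Second, the defects ``not a tree'' and ``not pruned at some level'' are witnessed by finitely much of $T$ and hence are permanent once installed, whereas forcing a non-random path requires an ongoing commitment.

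\textbf{Ambient class and skeleton.} Fix a large $c$ and let $G_c\subseteq\twome$ be the reals of deficiency $\le c$, a $\Pi^0_1$ class with $\mu(G_c)\ge 1-2^{-c}$; its canonical tree $T_c$ is pruned, satisfies $\absb{T_c\cap 2^n}\ge(1-2^{-c})2^n$, branches densely, and consists entirely of strings of deficiency $\le c$, so that every pruned subtree of $T_c$ is automatically path-incompressible with constant $c$. I would build $T$ as such a subtree, by a $\zp$-oracle finite-extension construction $p_0\subseteq p_1\subseteq\cdots$ with finite injury, interleaving: growth steps, which use the dense branching of $T_c$ to keep $\absb{T\cap 2^n}\ge f(n)$ for a fixed, slowly growing, computable unbounded $f$ (securing effective properness); and, at higher priority, the steps for the $R_e$. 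Keeping $T$ sparse is a genuine constraint rather than a bonus, since $T_c$ itself---indeed $\zp$---wtt-computes perfect path-random trees, so $T$ cannot be allowed to be ``large''.

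\textbf{The $R_e$-step and the central lemma.} When $g_e$ has converged far enough to look like a use-bound, consult the part of $\Psi_e^T$ already decided by the current condition and invoke the lemma on which everything turns: \emph{for every wtt-functional $(\Psi_e,g_e)$ and every admissible finite $p\subseteq T_c$ (downward closed, leaves at one level, prunable, on track with the growth steps), there is an admissible finite extension $q\supseteq p$ inside $T_c$, respecting the pending growth steps, such that either $\Psi_e^q$ already fails to be a pruned tree---a permanent defect---or $q$ forces into $\Psi_e^{(\cdot)}$ a string $\rho$ with $K(\rho)<\abs{\rho}-d$ for a prescribed $d$.} In the first case $R_e$ is finished; in the second, since $\Psi_e^{(\cdot)}$ is then pruned, $\rho$ extends to a path, and returning to $R_e$ with $d\to\infty$ along nested such $\rho$'s pins a non-random path into $\Psi_e^T$. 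Granting the lemma, the verification is routine: $[T]\subseteq G_c$ gives path-incompressibility, the growth steps give effective properness, and the installed defects give every $R_e$; hence $T$ wtt-computes no path-random perfect tree.

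\textbf{Main obstacle.} Everything reduces to the lemma, whose content is precisely that $\Psi_e$ cannot \emph{robustly} output a perfect path-random tree---one for every sparse completion of $p$ inside $T_c$---for if it were robust from $p$ it would be robust from every extension of $p$, hence along the tree being built, and one checks that a generically built $T$ can never be steered off the (random) paths of such a robust $\Psi_e^T$. The hard part is that a naive counting argument does not rule robustness out: the computable bound $g_e$ may be astronomically large, so the slice of $T$ read to decide level $m$ of $\Psi_e^T$ can carry far more than $m$ bits even though $T$ is sparse. The plan is instead to weigh the freedom in choosing $T$---confined to the almost-full-measure class $G_c$ and to $\approx f$ leaves per level---against the demand that $\Psi_e^{(\cdot)}$ branch, unboundedly and unpredictably, only into strings forced to stay incompressible along every completion; the expectation is that, with $c$ and the targets $d$ tuned, this demand cannot be met inside a subtree of $T_c$ of growth $f$, so some admissible $q$ is compelled either to expose a non-tree or non-pruned configuration or to place a compressible string on a branch of $\Psi_e^{(\cdot)}$. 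Making this balance quantitative is the crux, and it is exactly where a \emph{computable} use-bound is indispensable: it is what turns the relevant part of $\Psi_e^T$ into a finite object that a finite condition can decide.
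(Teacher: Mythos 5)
There is a genuine gap, and you have located it yourself: your entire argument funnels into the ``central lemma'' (for every wtt-functional and every admissible condition $p$ there is an extension $q$ that either installs a permanent defect or forces a compressible string into the output), and you offer only an ``expectation'' that a quantitative weighing of freedom against robustness will prove it. That lemma is precisely the hard content of the theorem, and it is what the paper spends all of its technical machinery establishing: it works inside a $\Pi^0_1$ class $Q$ of bounded-deficiency reals with prescribed branching levels, introduces the \emph{hitting cost} $c_Q(\mathcal G)=\inf\mu(\llbracket H\rrbracket\cap Q)$ over sets $H$ hitting a class of trees $\mathcal G$, proves that a $\Pi^0_1$ class of such trees with hitting cost $0$ is empty (by converting cheap hitting sets into a compression of prefixes of paths, contradicting the deficiency bound), and then, via the ``envelope'' construction, proves the dichotomy: if the functional $\Phi$ is total with use bounded by $f$ on all trees in $\mathcal T(Q,F)$, then either the hitting costs $c_Q(\mathcal T(Q,F)\cap\Phi^{-1}(\sigma))$ decay like $2^{-|\sigma|}$ --- which forces $\mathcal T(Q,F)=\emptyset$ --- or they do not, in which case an auxiliary c.e.\ request set shows some $T\in\mathcal T(Q,F)$ has $\Phi(T)$ of deficiency $\geq m$. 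Your outer skeleton (a $\emptyset'$-assisted finite-extension construction of a sparse subtree of the deficiency-bounded class, with requirements met by either killing totality within the computable use or forcing compressibility into the output) matches the paper's proof of its Theorem 3.2, but without a proof of the dichotomy the proposal does not constitute a proof.

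A second, more local problem: even granting your lemma as stated, your plan to defeat a surviving $\Psi_e$ by forcing a \emph{non-random path} needs the compressible strings $\rho$ to be nested along a single branch with deficiency tending to infinity; one compressible string of deficiency $d$, or even infinitely many scattered ones, does not preclude every path of $\Psi_e^T$ being random (the paper itself notes that path-random trees can have infinite deficiency as sets of strings). Your lemma only supplies \emph{some} compressible $\rho$ at each return to $R_e$, with no guarantee it extends the previous one, so the ``pins a non-random path'' step does not follow. The paper avoids this entirely by aiming at the weaker (and sufficient) target that $\Phi_e(T)$ have infinite deficiency --- i.e.\ that $T$ computes no path-\emph{incompressible} perfect tree within use $f$ --- and then recovers the path-random statement from the fact that every perfect path-random tree computes a path-incompressible tail (Lemma 2.4 of the paper), together with taking $f$ to dominate all computable functions; note also that the paper's Theorem 3.2 is stronger than a wtt statement, since $f$ need not be computable. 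If you want to keep your route, you should either prove the strengthened nested version of your lemma or switch to the deficiency-of-the-tree target and add the tail argument.
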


More generally,  we show that for any function $f$ there is a path-incompressible effectively proper tree $T\leq_T\emptyset'\oplus f$  which does not compute any path-random perfect tree within \ou $f$.
The difficulty of constructing a tree with these properties is due to (i) it has to be effectively proper; (ii) it has to be path-incompressible, rather than merely path-random; (iii) there is no restriction on the density of branching of the trees
we want to avoid computing. 

If some of these conditions are relaxed,  question (c) admits a simple answer:
\begin{prop}\label{jlTkH88T3Z}\ 
\begin{enumerate}[\hthree(a)]
\item There exists a low path-incompressible tree with two paths which computes a perfect path-random tree.
\item There exists an effectively proper path-random tree which does not compute any perfect path-random tree.
\end{enumerate}
\end{prop}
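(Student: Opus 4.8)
\emph{Part (a).} The plan is to produce the tree with a single application of the low basis theorem. First fix a computable tree $S$ with $[S]$ a $\Pi^0_1$ class of \mln\ random reals and $\mu([S])>1/2$ (e.g.\ the complement of a level of a universal test); then $[S]\cap[0]$ and $[S]\cap[1]$ are nonempty $\Pi^0_1$ classes, and, having no computable member, $[S]$ is perfect, so its pruned tree is a perfect tree. Next build the $\Pi^0_1$ class $\mathcal{C}$ of reals coding, under a fixed computable scheme, perfect subtrees of $S$ each of whose nodes $\nu$ satisfies $[\nu]\cap[S]\neq\emptyset$: this is $\Pi^0_1$ (the node conditions are $\Pi^0_1$) and nonempty (the pruned tree of $[S]$ is perfect), and every $A\in\mathcal{C}$ computes a perfect tree $T_A\leq_T A$ all of whose paths, by compactness, lie in $[S]$ and are hence random. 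I would then invoke the \KG coding theorem in the form: there is a single truth-table reduction $\Psi$, with computable use and depending only on $[S]\cap[1]$, such that every real is the $\Psi$-image of some random member of $[S]\cap[1]$. Form
\[
\mathcal{E}=\{(x,y,A):x\in[S]\cap[1],\ y\in[S]\cap[0],\ A\in\mathcal{C},\ \Psi^x=A\}.
\]
This is a $\Pi^0_1$ class --- every listed condition is $\Pi^0_1$, and since $\Psi$ is a truth-table reduction ``$\Psi^x=A$'' is $\Pi^0_1$ in $x\oplus A$ --- and it is nonempty: take any $A_0\in\mathcal{C}$, use \KG to obtain a random $x_0\in[S]\cap[1]$ with $\Psi^{x_0}=A_0$, and any $y_0\in[S]\cap[0]$.

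Now apply the low basis theorem to $\mathcal{E}$ and let $(x,y,A)$ be a low member. Then $x,y$ are random (members of $[S]$), $x\neq y$ (distinct first bits), $x\oplus y$ is low, and $x\oplus y\geq_T x\geq_T A\geq_T T_A$, a perfect path-random tree. The tree $T=\{\sigma:\sigma\preceq x\}\cup\{\sigma:\sigma\preceq y\}$ is then pruned, has exactly the two paths $x,y$, has finite deficiency $\max\{|\sigma|-K(\sigma):\sigma\preceq x\text{ or }\sigma\preceq y\}$ --- hence is path-incompressible --- and satisfies $T\equiv_T x\oplus y$, so it is low and computes $T_A$. (Consistently with the cited results, $x\oplus y$ cannot itself be random, since a low random is incomplete and, by \citep{treeout}, computes no perfect path-random tree; the coding accordingly forces $y$ not to be random relative to $x$.) The hard part, and the step I expect to be the main obstacle, is obtaining the \KG coding in exactly this shape --- a \emph{uniform truth-table} reduction coding arbitrary reals into randoms inside a fixed positive-measure $\Pi^0_1$ class --- which is what makes $\mathcal{E}$ genuinely $\Pi^0_1$ and keeps the low basis argument valid.

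\emph{Part (b).} Here the plan is a short explicit construction. Fix a low \mln\ random real $z$, and for $k\geq1$ let $z_k$ be $z$ with its $(k{-}1)$st bit flipped; each $z_k$ is random, being a one-bit change of a random real. Let $T$ be the downward closure of $\{z\restr n:n\in\omega\}\cup\bigcup_{k\geq1}\{z_k\restr n:n\in\omega\}$. Then $[T]=\{z\}\cup\{z_k:k\geq1\}$ is a set of random reals and $T$ is pruned, so $T$ is path-random; and $T\cap2^n$ consists of the $n+1$ pairwise distinct strings $z\restr n$ and its $n$ single-bit flips, so $|T\cap2^n|=n+1$ and $T$ is effectively proper. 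Finally $T\equiv_T z$: on one side $T\cap2^n$ is computable from $z\restr n$; on the other, $z\restr n$ is the unique member of $T\cap2^n$ having two one-step extensions in $T$. Hence $T$ is low and, being Turing equivalent to the incomplete random $z$, computes no perfect path-random tree by \citep{treeout}. There is no genuine obstacle here beyond recalling that cited fact.
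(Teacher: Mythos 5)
Your construction is correct and is essentially the paper's own argument in a light disguise: the paper also starts from a single incomplete random $z$ and builds a countable-path, effectively proper, path-random tree that is tt-equivalent to $z$ (by re-appending $z$ at each position where $z$ has a zero, rather than by one-bit flips), and then quotes \citep{treeout} to conclude that $z$, being an incomplete random, computes no perfect path-random tree. Your verification that $[T]=\{z\}\cup\{z_k\}$, that $|T\cap 2^n|=n+1$, and that $T\equiv_T z$ all check out, and a low random is incomplete, so the cited fact applies.

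\textbf{Part (a).} Here the step you yourself flag as ``the main obstacle'' is not merely hard: it is provably impossible, and the whole plan collapses with it. Every $A\in\mathcal{C}$ is noncomputable (a computable code would give a computable pruned perfect tree of randoms, whose leftmost path is a computable random) and computes a perfect path-random tree $T_A$. Hence \emph{any} random $x$ with $x\geq_T A$ computes a perfect path-random tree, and so by the theorem of \citep{treeout} quoted in this paper, $x\geq_T\emptyset'$. Consequently: (i) the tt-form of Ku\v{c}era--G\'acs you posit cannot exist even for the restricted targets $A\in\mathcal{C}$ (and Demuth's theorem --- a noncomputable real tt-below a Martin-L\"of random has random Turing degree --- refutes the general ``every real is a tt-image of a random'' form outright); (ii) more robustly, suppose you only ask for what the argument actually needs, namely a functional $\Psi$ total on every oracle in the $\Pi^0_1$ class $[S]\cap[1]$, so that ``$\Psi^x=A$'' can be written in $\Pi^0_1$ form and so that the low-basis witness satisfies $A\leq_T x$. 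Then every member $(x,y,A)$ of $\mathcal{E}$ has $x$ random with $x\geq_T A\geq_T T_A$, hence $x\geq_T\emptyset'$; so if $\mathcal{E}$ were a nonempty $\Pi^0_1$ class, the low basis theorem would hand you a member with $x$ low, a contradiction. In other words $\mathcal{E}$ is empty: no coding of the required shape exists. If instead you render ``$\Psi^x=A$'' by the genuinely $\Pi^0_1$ condition ``no halted computation of $\Psi^x$ disagrees with $A$'' and use the actual Ku\v{c}era--G\'acs functional, the class is $\Pi^0_1$ and nonempty, but that functional is partial off its coding reals (its decoding waits for co-c.e.\ events), so the low member gives no reduction $A\leq_T x$ and the conclusion evaporates.

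The structural lesson, which is exactly how the paper proceeds, is that a \emph{single} random path cannot carry the tree; the computational power must sit in the join of the two paths. The paper takes a low PA real $z$, invokes \citep{BLNg08} to write $z\equiv_{tt}x\oplus y$ with $x,y$ random, and uses the fact that every PA real tt-computes a perfect path-incompressible tree; the two-path tree with paths $x,y$ is then low, path-incompressible, Turing equivalent to $z$, and computes the desired perfect tree, while neither random path alone could do so.
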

There are precise limits on the branching-density of path-incompressible trees. 
Given increasing $\ell=(\ell_n)$, we say
that a tree $T$ is  {\em $\ell$-perfect} if each node of length $\ell_n$ in $T$ has at least two extensions of length $\ell_{n+1}$.
\begin{thm}\label{ElRZki9oa}
If $\ell=(\ell_n)$ is computable and increasing the following are equivalent:
\begin{enumerate}[\hspace{0.3cm}(i)]
\item $\exists$ an $\ell$-perfect path-random tree
\item $\exists$ an  $\ell$-perfect path-incompressible tree 
\item $\sum_n 2^{-(\ell_{n+1}-\ell_n)}<\infty$.
\end{enumerate}
\end{thm}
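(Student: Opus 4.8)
The plan is to establish the cycle (iii)⇒(ii)⇒(i)⇒(iii). The implication (ii)⇒(i) is immediate: a path-incompressible tree is pruned with finite deficiency, hence all of its paths are random, so it is path-random; the $\ell$-perfect property is preserved verbatim. So the real content lies in the other two steps.

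For (iii)⇒(ii), assume $\sum_n 2^{-(\ell_{n+1}-\ell_n)}<\infty$ and build an $\ell$-perfect tree of finite deficiency. The idea is to construct $T$ level-by-level along the $\ell_n$, choosing at each node of length $\ell_n$ two extensions of length $\ell_{n+1}$ so as to keep every node $\sigma\in T$ compressible by only an additive constant's worth. The key tool is a counting/weight argument: since $K(\sigma)\le |\sigma|+2\log|\sigma|+O(1)$ always, the danger is only that the \emph{collection of chosen extensions} becomes too large relative to the measure it covers, forcing some string to have large deficiency. One arranges the choices greedily: at stage $n$, among the $2^{\ell_{n+1}-\ell_n}$ candidate extensions of a given node, pick two whose deficiency stays bounded; a Kraft–Chaitin-style bookkeeping shows this is possible because the total "cost" $\sum_n 2^{-(\ell_{n+1}-\ell_n)}$ of the doubling is finite, so the measure of the closed set $[T]$ is positive, and for a tree of positive measure whose branching is this sparse one can bound $K(\sigma)\ge |\sigma|-c$ uniformly. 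Concretely, I would fix a Martin-Löf test / universal test and show that with the stated summability the constructed tree avoids every level of the test beyond a fixed point, giving uniform finite deficiency. (Alternatively: take $[T]$ to be a random closed set in the Barmpalias–Brodhead–Cenzer–Dashti–Weber sense with the prescribed branching rates; summability guarantees it is nonempty with positive probability, and one extracts a path-incompressible subtree.)

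For (i)⇒(iii), argue by contrapositive: if $\sum_n 2^{-(\ell_{n+1}-\ell_n)}=\infty$, then \emph{no} $\ell$-perfect tree can be path-random. Suppose $T$ is $\ell$-perfect. Each node of length $\ell_n$ in $T$ has $\ge 2$ extensions of length $\ell_{n+1}$, so by induction $T$ has at least $2^{n}$ nodes of length $\ell_n$ — but more importantly, fix any path $x\in[T]$ and look at $\sigma=x\restr\ell_{n+1}$: its predecessor $x\restr\ell_n$ in $T$ has a \emph{sibling} extension, i.e. there are at least two length-$\ell_{n+1}$ strings in $T$ extending $x\restr\ell_n$, which differ somewhere in the block $(\ell_n,\ell_{n+1}]$. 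The point is that $x\restr(\ell_n,\ell_{n+1}]$ ranges over a set of size $\ge 2$ that is, in a precise sense, "determined with one bit of slack" by membership in $T$. Summing these slack bits: one builds a Martin-Löf test (a c.e. sequence of open sets of measure $\to 0$) capturing the paths of any such $T$ whenever $T$ is, say, $\Pi^0_1$, using that $\prod_n (1 - 2^{-(\ell_{n+1}-\ell_n)+1})=0$ when the exponents are not summable — equivalently, the measure of $[T]$ inside the "allowed" cylinders tends to zero along a suitable martingale that bets the saved bit at each block. This shows some path of $T$ is not random. The technical care needed: $T$ need not be $\Pi^0_1$, so one phrases the argument as a direct complexity estimate — for infinitely many $n$, $K(x\restr\ell_{n+1}) \le K(x\restr\ell_n) + (\ell_{n+1}-\ell_n) - g_n$ for a function $g_n$ with $\sum_n 2^{-g_n}$ still divergent-controlled, forcing $|\tau|-K(\tau)\to\infty$ along prefixes $\tau$ of some branch by a Borel–Cantelli / first-moment selection of the branch.

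The main obstacle is the construction in (iii)⇒(ii): making the greedy doubling compatible with a \emph{uniform} deficiency bound is delicate, because naive choices can let the deficiency creep up by $O(1)$ at each of infinitely many levels. The resolution is to run the construction against a fixed universal Martin-Löf test and use the summability $\sum_n 2^{-(\ell_{n+1}-\ell_n)}<\infty$ as a Kraft budget: the total measure one is ever forced to "throw away" to dodge the test is finite, so the surviving $\ell$-perfect subtree has positive measure and hence (being this sparse) uniformly incompressible paths. Getting the quantifiers right there — that two good extensions always survive at every node simultaneously — is where the real work goes, and it mirrors the positive-measure random-closed-set constructions already in the literature, which I would cite rather than redo.
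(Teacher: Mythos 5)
Your decomposition (iii)$\Rightarrow$(ii)$\Rightarrow$(i)$\Rightarrow$(iii) puts all the weight on the step (i)$\Rightarrow$(iii), and that is where the proposal has a genuine gap. You try to show directly that if $\sum_n 2^{-(\ell_{n+1}-\ell_n)}=\infty$ then an arbitrary $\ell$-perfect tree has a nonrandom path, via an estimate of the form $K(x\restr_{\ell_{n+1}})\le K(x\restr_{\ell_n})+(\ell_{n+1}-\ell_n)-g_n$ ``for infinitely many $n$''. But for an arbitrary (non-effective) tree there is no source for this saving: knowing that $x\restr_{\ell_n}$ has a sibling extension inside $T$ yields no shorter description of $x\restr_{\ell_{n+1}}$, because the decompressor has no access to $T$ and cannot identify the sibling; likewise the ``Borel--Cantelli / first-moment selection of the branch'' has no specified measure on $[T]$ and no effective content. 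You yourself flag that $T$ need not be $\Pi^0_1$, but the proposed substitute does not repair this. The paper's route supplies exactly the two reductions you are missing: first, (a)$\to$(b) via the tail lemma (every path-random tree has a path-incompressible tail, cited from Hirschfeldt--Jockusch--Schupp and Barmpalias--Wang), which places a tail of $T$ inside the $\Pi^0_1$ tree $P_c$ of strings of deficiency $\le c$; second, a compactness argument replacing the arbitrary path-incompressible $\ell$-perfect tree by an \emph{effective} one: the set $Q$ of strings lying on some member of the closed family $\FF_\ell(P_c)$ of everywhere-twice-$\ell$-branching subtrees of $P_c$ is a $\Pi^0_1$ tree which again branches at every $\ell$-level. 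Only then do the cited results of Barmpalias--Lewis-Pye on limits of Ku\v{c}era--G\'acs coding apply, and it is precisely the $\Pi^0_1$-ness of $Q$ (approximability of its branching) that makes a compression argument possible when the sum diverges. Without reducing to an effective object---either by the tail lemma or by a Baire-category argument placing a tail of $[T]$ inside some $[P_d]$---your step (i)$\Rightarrow$(iii) does not go through, and it is in fact the hardest composite implication of the theorem.

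Two smaller points. In (iii)$\Rightarrow$(ii), the claim that positive measure of $[T]$ together with sparse branching yields a uniform bound $K(\sigma)\ge|\sigma|-c$ is not right as stated; what works (and is what the paper does via its Lemma 3.4, i.e.\ Lemma 2.5/2.2 of the Barmpalias--Lewis-Pye paper) is to grow the $\ell$-perfect subtree \emph{inside} the $\Pi^0_1$ tree $P_c$ of $c$-incompressible strings, using the summability as the budget guaranteeing two extendible successors at each $\ell$-level; path-incompressibility is then automatic from containment in $P_c$, not from measure. Your instinct to cite the literature here is fine and matches the paper. Your (ii)$\Rightarrow$(i) is the same trivial observation as in the paper.
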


We conclude with a positive answer to question (d) above: 
can a sparse perfect path-incompressible tree be effectively transformed into
a denser path-incompressible tree?
\begin{thm}[Informal]\label{VlMkthFR3B}
Any sparse computably-perfect path-incompressible tree
can be effectively transformed into an $n^2$-perfect path-incompressible tree,
almost surely.
\end{thm}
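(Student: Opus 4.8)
The plan is to re-pack the incompressible content of the given tree $T$ onto the fast branching schedule $n^2$, using a random oracle $z$ to decide which portions of $T$ to copy. The transformation is a single Turing functional applied to $T\oplus z$, and ``almost surely'' refers to the measure of the set of $z$ for which its output is as required.

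First I would normalize. Since $T$ is computably perfect, fix computable branching levels $\ell_0<\ell_1<\cdots$ so that each length-$\ell_n$ node of $T$ has two incomparable extensions of length $\ell_{n+1}$, and fix $c$ with $\abs\sigma-K(\sigma)\le c$ for all $\sigma\in T$. The hypothesis that $T$ is \emph{sparse} enters only through an inequality of the shape $\ell_M\ge M^2+M$: it guarantees that every prefix $x\restr\ell_M$ of a path $x\in[T]$ contains, with room to spare, enough incompressible bits to fill the first $M^2$ positions of the tree being built.

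Given $z$, I would construct $T'=T'_z$ blockwise over the intervals $[n^2,(n+1)^2)$. A length-$n^2$ node of $T'_z$ carries as bookkeeping a node $x\restr\ell_n\in T$ together with the record of which bits of $x$ have been used; to pass to length $(n+1)^2$ one \emph{(i)} copies into position $n^2$ the bit along which $x$ branches at level $\ell_n$ --- this is what produces the two children --- and \emph{(ii)} fills the remaining $2n$ positions with $2n$ bits of $x\restr[\ell_n,\ell_{n+1})$ whose locations are read from the next block of $z$. Then $T'_z$ is $T\oplus z$-computable with a fixed oracle use, uniformly in $z$; each of its paths decodes $T\oplus z$-effectively to a path of $T$; and $T'_z$ is $n^2$-perfect for \emph{every} $z$, since at block $n$ the two children differ in the copied branching bit of $T$.

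The remaining point --- that $T'_z$ is path-incompressible for almost every $z$ --- is the crux, and the step I expect to be the main obstacle. A level-by-level union bound will not do: it can only cap the deficiency of $T'_z$ by a quantity growing like $\log n$, because each ``copy a branching bit'' step in \emph{(i)} is a potential descriptive hole and no fixed threshold survives infinitely many blocks. Instead I would bound, uniformly in $z$, a single weight quantity --- essentially $\sum_{\rho\in T'_z}2^{-\abs\rho+K(\rho)}$, arranged as a supermartingale along $z$ --- and conclude it is finite for almost every $z$, whence $T'_z$ has finite deficiency. Two inputs drive the bound: ample excess, $\sum_\sigma 2^{-K(\sigma)}\le 1$, which forces the $z$-selections in \emph{(ii)} that copy a deficiency-$>d$ substring out of a fixed incompressible prefix to carry counting weight at most $2^{-d+O(1)}$ within each block; and the convergence $\sum_n 2^{-((n+1)^2-n^2)}=\sum_n 2^{-(2n+1)}<\infty$, which is exactly the total branching overhead of the $n^2$-schedule and supplies the finite ``budget'' into which all per-block description costs collapse into one additive constant. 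Here Theorem~\ref{ElRZki9oa} with $\ell_n=n^2$ is the consistency check that a uniform deficiency bound is not a priori impossible, and the sparseness of $T$ is what leaves step \emph{(ii)} the slack it needs; the rest is bookkeeping.
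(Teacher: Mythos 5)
Your architecture differs from the paper's in two respects. First, your ``almost surely'' ranges over an auxiliary random oracle $z$, and path-incompressibility of the output is only claimed for almost every $z$; in the paper's formal version (Theorem \ref{dFNXj6nijs}) the transformation is a single deterministic truth-table map $\Phi$ on trees, incompressibility is preserved for \emph{every} input tree, and the probability is taken with respect to the uniform measure $\nu$ on the input trees $\TT_\ell$, only for the statement that the input is $\ell$-perfect and the output is $m$-perfect. Second, and more importantly, the paper never copies bits of $T$ forward into the output: it fixes a computable family $H_\sigma\subseteq 2^{\ell_n}$ for $\sigma\in 2^{m_n}$ with $\mu(H_\sigma)\leq 2^{-|\sigma|}$, declares $\sigma\in\Phi(T)$ iff $T$ meets $H_\sigma$, and transfers compressibility \emph{backwards}: if $K(\sigma)\leq|\sigma|-c-d$, then by Kraft--Chaitin--Levin every $\tau\in H_\sigma$ has $K(\tau)\leq|\tau|-c$, so a path-incompressible $T$ simply cannot hit such an $H_\sigma$. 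That yields the uniform deficiency constant outright and leaves only a Borel--Cantelli estimate (about both children of a node coming from the same $H_{\sigma\ast\rho}$) for the probabilistic part.

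The crux of your plan --- that $T'_z$ is path-incompressible for almost every $z$ --- is exactly what is not established, and the sketch does not close it. The quantity $\sum_{\rho\in T'_z}2^{-|\rho|+K(\rho)}$ has the wrong sign: its finiteness does not bound deficiency (a string of huge deficiency contributes almost nothing, while a fully incompressible tree already makes the sum diverge since $K(\rho)\approx|\rho|+K(|\rho|)$). With the intended quantity $\sum_\rho 2^{|\rho|-K(\rho)}$ the accounting still fails: level $n^2$ of $T'_z$ carries about $2^n$ nodes, and even granting a per-node bound of $2^{-d+O(1)}$ on the probability that a block selection produces a string of deficiency $d$ (itself unproved --- the selection-to-output map is far from injective), the union bound is $2^{n-d}$, not summable in $n$ for fixed $d$; the series $\sum_n 2^{-(2n+1)}$ you invoke controls branching, not deficiency, so first Borel--Cantelli cannot deliver a single constant, and the shared per-level selection only correlates the bad events without shrinking them. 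Moreover the slack $\ell_n-n^2\approx n$ you allow is too small for a reconstruction bound: recovering $x\restr_{\ell_n}$ from the selected string, the $\approx n$ unselected bits and the positions bounds the output deficiency only by a quantity growing with $n$, and only \emph{conditionally on $z$}; upgrading this to an unconditional uniform constant over the uncountably many paths of $T$ is precisely the hard part. This is why the paper routes incompressibility through the hitting sets $H_\sigma$ rather than through bit-copying. As it stands, your construction does give $n^2$-perfectness for every $z$ (that part is fine), but the main claim remains open.
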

The formal statement is given in \S\ref{Cz7pIhPLAK}, but two  aspects are clear: the probabilistic success of the transformation, 
and the bound on the density of the branching  achieved.

{\bf Our methodology.} The challenge in the study of reducibilities between path-incompressible trees is 
the lack of a simple representation of the associated maps between trees.  
Our approach is based on families of {\em hitting-sets} that intersect, and {\em missing-sets} that avoid the inverse images of the maps. The intuition comes from the Fell  ({\em hit-or-miss}) topology on the space of closed sets, and the theory of measure in this space   
\citep{Molchanov}, where probability of a closed set of reals is characterized by 
the measure of its hitting-sets \citep{Axonphd, axon2015}. 

\section{Background and notation}
We lay out additional notation and known facts that we need about the Cantor space.

Strings are ordered first by length and then lexicographically.
Let 
\begin{itemize}
\item  $2^n$ denote the set of $n$-bit strings and $z\restr_n$ denote the $n$-bit prefix of $z$
\item $\twome$ denote the set of {\em reals} and  $\twomel$ the set of binary strings.
\end{itemize}
The full binary tree represents the Cantor space, with topology generated by the sets
\[
\dbra{\sigma}:=\sqbrad{z\in\twome}{\sigma\prec z}
\hspace{0.5cm}\textrm{and}\hspace{0.5cm}
\dbra{V}=\bigcup_{\sigma\in V}\dbra{\sigma}
\hspace{0.5cm}\textrm{for $V\subseteq\twomel$}.
\]
We often identify $\sigma$ with $\dbra{\sigma}$ and $V$ with $\dbra{V}$.
The uniform measure on $\twome$ is given by 
\[
\mu(\sigma):=\mu(\dbra{\sigma})=2^{-|\sigma|}
\hspace{0.3cm}\textrm{and}\hspace{0.3cm}
\mu(V):=\mu(\dbra{V}).
\]
Let $\mu_{\sigma}(V)$ be the measure of $\dbra{V}$ relative to $\dbra{\sigma}$:
\[
\mu_{\sigma}(V):=\frac{\mu(\dbra{V}\cap \dbra{\sigma})}{\mu(\sigma)}=
2^{|\sigma|}\cdot \mu(\dbra{V}\cap \dbra{\sigma}).
\]
Let $\ast$ denote the concatenation of strings and
\[
\sigma  \ast U:= \sqbrad{\sigma\ast \tau}{\tau\in U}
\hspace{0.3cm}\textrm{and}\hspace{0.3cm}
V\ast U:=\sqbrad{\sigma\ast \tau}{\sigma\in V\wedga \tau\in U}.
\]

\subsection{Background on randomness of reals}\label{JfE8m2NCZC}
A \ml test is a uniformly \ce sequence of \pf sets $V_i\subseteq\twomel$ such that $\mu(V_i)<2^{-i}$.
Following \citet{MR0223179}, we say that $x$ is random if $x\not\in \cap_i \dbra{V_i}$ for all such tests $(V_i)$.

Randomness can equivalently be defined in terms of incompressibility, via Kolmogorov complexity.
A \pf machine is a Turing machine whose domain is a \pf set of strings. 
The {\em \pf Kolmogorov complexity of $\sigma$} with respect to \pf machine $M$, denoted by $K_M(\sigma)$,
is the length of the shortest input $\rho$ such that $M(\rho)$ converges and outputs $\sigma$. There is an optimal
\pf machine $U$, such that $\sigma\mapsto K_U(\sigma)$ is minimal up to a constant, with respect to all \pf machines.
We fix an optimal \pf machine and let $K(\sigma)$ denote the corresponding \pf complexity of $\sigma$.
Given $c\geq 0$, 
we say that  $\sigma$ is {\em $c$-incompressible} if $K(\sigma)\geq |\sigma|-c$.
Schnorr showed that
a real is \ml random if and only if there is a finite upper bound  on the randomness deficiency of its initial segments.

We use the {\em Kraft-Chaitin-Levin theorem}  (see \cite[\S 3.6]{rodenisbook}), which says that  if
\begin{itemize}
\item $(\ell_{\sigma})$ is uniformly approximable from above
\item $S\subseteq\twomel$ is \ce and $\sum_{\sigma\in S} 2^{-\ell_{\sigma}}<1$
\end{itemize}
there exists a  \pf machine $M$ such that $\forall \sigma\in S,\ K_M(\sigma)\leq \ell_{\sigma}$. We use this theorem to prove the following useful Lemma.

\begin{lem}
	\label{cbce2bda}
	Let $(V_n)$ be an uniformly \ce sequence of sets of strings. If \[\sum_{\sigma\in V_n}2^{-|\sigma|}\leq 2^{-2n},\] then there is a constant $c$ such that $K(\sigma)\leq|\sigma|-n+c$ for all $\sigma\in V_n$.
\end{lem}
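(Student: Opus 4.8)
The plan is to feed a single uniformly enumerated requirement set into the Kraft--Chaitin--Levin theorem quoted above. Before doing so it is worth recording one elementary consequence of the hypothesis: if $\sigma\in V_n$ then $2^{-\abs{\sigma}}\leq\sum_{\tau\in V_n}2^{-\abs{\tau}}\leq 2^{-2n}$, hence $\abs{\sigma}\geq 2n$ and in particular $\abs{\sigma}-n\geq 0$ for every $\sigma\in V_n$. This guarantees that the code lengths we are about to request will be nonnegative.

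Next I would form the requirement set $S$ that, for each $n$ and each $\sigma$ enumerated into $V_n$, contains the request $(\ell_\sigma,\sigma)$ with $\ell_\sigma:=\abs{\sigma}-n+2$. Since $(V_n)$ is uniformly \ce, the list of such requests is \ce, and the length attached to a request at the moment it is enumerated is computable from $n$ and $\sigma$, hence a fortiori approximable from above (a string lying in several $V_n$ simply generates several requests, which only strengthens the eventual bound). The total requested weight is
\[
\sum_{(\ell_\sigma,\sigma)\in S}2^{-\ell_\sigma}=\frac14\sum_n 2^{n}\sum_{\sigma\in V_n}2^{-\abs{\sigma}}\leq \frac14\sum_n 2^{n}\cdot 2^{-2n}=\frac14\sum_n 2^{-n}<1,
\]
since $\sum_n 2^{-n}\leq 2$ under any of the usual indexing conventions; the shift by $2$ is there precisely to leave room strictly below $1$.

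The Kraft--Chaitin--Levin theorem then yields a \pf machine $M$ with $K_M(\sigma)\leq\ell_\sigma=\abs{\sigma}-n+2$ whenever $\sigma\in V_n$. By optimality of the universal \pf machine $U$ there is a constant $c_M$ with $K(\sigma)\leq K_M(\sigma)+c_M$ for all $\sigma$, so setting $c:=c_M+2$ gives $K(\sigma)\leq\abs{\sigma}-n+c$ for all $\sigma\in V_n$, as required. I do not expect a genuine obstacle here: the argument is a textbook application of Kraft--Chaitin, and the only points demanding a moment's attention are the nonnegativity of the requested lengths (handled by $\abs{\sigma}\geq 2n$) and the strict inequality in the weight bound (handled by the additive shift, which also absorbs the choice of starting index in the sum over $n$).
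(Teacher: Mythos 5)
Your proof is correct and follows essentially the same route as the paper: both feed the requests $\ell_\sigma=\abs{\sigma}-n$ (you add a harmless shift of $2$) for $\sigma\in V_n$ into the Kraft--Chaitin--Levin theorem, bound the total weight by $\sum_n 2^{n}\cdot 2^{-2n}$, and then invoke optimality of $K$. Your extra care about nonnegativity of the lengths and about keeping the weight strictly below $1$ only tidies up details the paper leaves implicit.
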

\begin{proof}
	We use the {\em Kraft-Chaitin-Levin theorem} by setting $S=\bigcup_n V_n$ and $l_\sigma=|\sigma|-n$ for $\sigma\in V_n$. The total weight of requests is \[\sum_{n}\sum_{\sigma\in V_{n}}2^{-(|\sigma|-n)}=\sum_n 2^n\sum_{\sigma\in V_n}2^{-|\sigma|}\leq\sum_n 2^n2^{-2n}\leq 1,\] so by the Kraft-Chaitin-Levin theorem there is a prefix-free machine $M$ such that $K_M(\sigma)\leq|\sigma|-n$ for all $\sigma\in V_n$. Then since $K$ is minimal, there is a constant $c$ such that $K(\sigma)\leq K_M(\sigma)+c$ for all $\sigma$. Then $K(\sigma)\leq |\sigma|-n+c$ for all $\sigma\in V_n$.
\end{proof}

Given a \pf $V\subseteq \twomel$ and $k$, define $V^k$ inductively by $V^1:=V$ and $V^{k+1}:=V^k\ast V$. We use the following Lemma.
\begin{lem}\label{nBTeELXDu2}
Given \pf \ce $V\subseteq\twomel$, $k>0$ and computable increasing $(t_n)$:
\begin{enumerate}[\hspace{0.3cm}(i)]
\item if $V$ contains a prefix of each nonrandom, so does $V^k$. \textup{\citep{MR820784}}
\item if $\exists c\ \forall n: K(x\restr_{t_n})>t_n-c$, then $x$ is random. 
In fact, there exists $d$ such that
\[
\forall c\ \parb{\forall n: K(x\restr_{t_n})>t_n-c}\ \Rightarrow\ 
\forall n: K(x\restr_{n})>n-c-d
\]
for all $x$, where $d$  only depends on $(t_i)$. \textup{\citep{NSTMR2140044}}
\end{enumerate}
\end{lem}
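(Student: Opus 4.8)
Both clauses are standard (hence the citations); here is how I would prove them.

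\emph{Clause (i).} The plan is to reduce everything to one fact: prepending a finite string to a random real yields a random real --- equivalently, the tail of a nonrandom real beyond any finite prefix is again nonrandom. I would read this off from the complexity characterization of randomness. If $z$ is $\sigma$ followed by a real $y$ whose initial segments have deficiency at most $c$, then for any prefix $\tau$ of $y$ one has $K(\tau)\leq K(\sigma\tau)+K(|\sigma|)+O(1)$ (strip the first $|\sigma|$ bits), so the deficiency $|\sigma\tau|-K(\sigma\tau)$ is at most $|\sigma|+K(|\sigma|)+c+O(1)$; together with the trivial bound on prefixes shorter than $\sigma$ this caps the deficiency of $z$, so $z$ is random. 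Granting this, fix a nonrandom $z$; since $V$ meets the prefixes of every nonrandom, it contains some $\sigma_1\preceq z$; the tail of $z$ past $\sigma_1$ is again nonrandom, so $V$ contains a prefix $\sigma_2$ of it; after $k$ steps $\sigma_1\ast\cdots\ast\sigma_k\in V^k$ is a prefix of $z$. Hence $V^k$ meets the prefixes of every nonrandom. (Effectivity of $V$ is not used here.)

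\emph{Clause (ii).} It suffices to prove the displayed refinement, as randomness of $x$ then follows. Fix $m$ and let $t_n$ be the least term with $t_n\geq m$ (nothing to prove if $m$ is itself some $t_n$). I would build a single \pf machine $M$ with the computable sequence $(t_i)$ hard-wired: $M$ runs $U$ on an initial segment of its input, and if $U$ halts with output $\rho$, computes the least $n$ with $t_n\geq|\rho|$, reads the next $t_n-|\rho|$ input bits, and outputs $\rho$ followed by those bits. Prefix-freeness of $\dom{U}$ makes $\dom{M}$ \pfn, and the key point is that $M$ recovers the block length $t_n-|\rho|$ from $|\rho|$ and the hard-wired sequence, so this length need not be described. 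Applying this with $\rho=x\restr_m$ gives $K(x\restr_{t_n})\leq K(x\restr_m)+(t_n-m)+O(1)$, hence $m-K(x\restr_m)\leq t_n-K(x\restr_{t_n})+d$ where the constant $d$ depends only on $M$, that is, only on $(t_i)$. This is exactly the displayed implication, and taking the supremum over $m$ shows $x$ random.

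The work in clause (i) is routine. In clause (ii) the one thing that needs care is to resist the naive route --- relaying the deficiency of $x\restr_m$ to $x\restr_{t_n}$ by issuing a Kraft--Chaitin request for the intermediate block of length $t_n-m$ --- which pays an extra $K(t_n-m)=O(\log(t_n-m))$ and so fails to produce a constant $d$ uniform in $c$ once the gaps $t_{n+1}-t_n$ are unbounded, which they are in the intended applications (e.g.\ $t_n=n^2$). Folding the sequence $(t_i)$ into the machine removes that cost; the small obstacle is to verify that $\dom{M}$ is honestly prefix-free and that the additive constant truly depends on nothing beyond $(t_i)$.
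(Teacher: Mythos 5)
Your proposal is correct: the paper gives no proof of this lemma, citing Ku\v{c}era and Nies--Stephan--Terwijn instead, and your arguments are exactly the standard ones behind those citations --- for (i), the tail-of-a-nonrandom-is-nonrandom induction that places $\sigma_1\ast\cdots\ast\sigma_k\in V^k$ below $z$, and for (ii), the prefix-free machine with $(t_i)$ hard-wired that pads a $U$-description of $x\restr_m$ with the literal bits up to the next $t_n$, yielding $K(x\restr_{t_n})\leq K(x\restr_m)+(t_n-m)+\bigon$ and hence a constant $d$ depending only on $(t_i)$. Your remark about avoiding the extra $K(t_n-m)$ cost is precisely why the constant is uniform in $c$, so nothing is missing.
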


\subsection{Background on trees and randomness}\label{4UGgcdhLxh}

All trees in this paper, unless stated to be finite, are assumed to be infinite and pruned.

A string $\sigma$ \textit{branches} in $T$ if it has at least two incomparable extensions in $T$. A tree $T$ is \textit{skeletal} if there is $z\in[T]$ such that $\sigma\in T$ branches in $T$ if and only if $\sigma\prec z$. This $z$ is called the \textit{main branch} of the skeletal tree. Note that skeletal trees are proper but are neither positive nor perfect.

The \textit{$k$-prefix} of a (finite or infinite) tree $T$ is $T\upharpoonright_k=T\cap 2^{\leq k}$. Given finite tree $F$ and (finite or infinite) tree $T$, write $F\prec T$ if $F=T\upharpoonright_k$ for some $k$. In this paper, all finite trees are also assumed to be ``pruned'', in the sense that each of them is a prefix of an infinite pruned tree. For such a finite tree $F$, the \textit{height} of $F$ is the unique $k$ such that $F=T\upharpoonright_k$ for some $T$.

The standard topology on the space of all trees $\CC$ is generated by the basic open sets $\dbra{F}:=\sqbrad{T\in\CC}{F\prec T}$ where $F$ is a finite tree. In the case of $\twome$, this coincides with the hit-or-miss and the Vietoris topologies \citep[Appendix B]{Molchanov}. 
We note that $\CC$ is homeomorphic to a closed subset of the Cantor space. We only use the fact that {\em $\CC$ is compact} \citep{Molchanov}.

The {\em $\sigma$-tail} of a tree $T$ is the subtree of $T$ consisting of the nodes that  prefix or extend $\sigma$.

The following is from 
\citep[Corollary 1.12]{treeout} and, independently, \citep{deniscarlsch21}:
\begin{lem}\label{nqjFhRoRlv}
If $T$ is path-random, every positive computable tree $W$ contains a tail of $T$.
In particular, every path-random tree has a path-incompressible tail.
\end{lem}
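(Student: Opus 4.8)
The plan is to prove the displayed inclusion first and then the ``in particular'' by a separate, shorter argument. For the inclusion I would argue by contraposition: assume $W$ contains no tail of $T$, and derive a contradiction with path-randomness. The crucial observation is that this assumption forces $[W]\subseteq[T]$. Indeed, if some $z\in[W]$ were not a path of $T$, then since $[T]$ is closed there is $n$ with $\dbra{z\restr_n}\cap[T]=\emptyset$; because $T$ is pruned, no node of $T$ can be $\succeq z\restr_n$ (such a node would extend to a path of $T$ inside $\dbra{z\restr_n}$), so the $z\restr_n$-tail of $T$ is contained in the set of prefixes of $z\restr_n$, all of which lie in $W$ since $z\restr_n\prec z\in[W]$ — that is a tail of $T$ contained in $W$, a contradiction. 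Hence $[W]\subseteq[T]$, and as $W$ is pruned this upgrades to $W\subseteq T$. I would then invoke the elementary fact that an infinite pruned computable tree carries a computable path: from the root $\estring$, having reached $\tau\in W$ pass to $\tau 0$ if $\tau 0\in W$ and otherwise to $\tau 1$; prunedness of $W$ (plus downward-closure) guarantees at least one child of $\tau$ lies in $W$, so the construction continues forever, and the resulting path $z_0$ is computable because membership in $W$ is decidable. Now $z_0$ is computable, hence has unbounded deficiency and is not random, yet $z_0\in[W]\subseteq[T]$ — contradicting that every path of $T$ is random. Therefore $W$ contains a tail of $T$. (Only prunedness and computability of $W$ are actually used; positivity plays no role in this statement.)

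For the ``in particular'', suppose for contradiction that the path-random tree $T$ has no path-incompressible tail. For each $\sigma\in T$ the $\sigma$-tail of $T$ is pruned, so, failing to be path-incompressible, it must have infinite deficiency; since the finitely many prefixes of $\sigma$ contribute only a bounded amount, this gives $\sup\{\,\abs{\tau}-K(\tau):\tau\in T,\ \sigma\preceq\tau\,\}=\infty$ for every $\sigma\in T$. I would then build recursively a chain $\estring=\tau_0\prec\tau_1\prec\tau_2\prec\cdots$ in $T$ with $\abs{\tau_k}-K(\tau_k)\geq k$: at step $k$ apply the preceding line to $\sigma=\tau_{k-1}$ with bound $k$, obtaining $\tau_k\succeq\tau_{k-1}$ in $T$ of deficiency at least $k$ (note $\abs{\tau_k}\geq k$, so the lengths are unbounded and the chain is strictly increasing). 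Setting $y:=\bigcup_k\tau_k$, every prefix of $y$ is a prefix of some $\tau_k\in T$ and $T$ is downward closed, so $y\in[T]$; but $\abs{y\restr_{\abs{\tau_k}}}-K(y\restr_{\abs{\tau_k}})\geq k$ for all $k$, so $y$ has infinite deficiency and is not random — contradicting path-randomness of $T$.

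Neither half requires a hard technical step. The one place that calls for the right idea is the first reduction — seeing that ``no tail of $T$ lies in $W$'' is strong enough to pull all of $[W]$ into $[T]$, after which the computable (hence nonrandom) path of the pruned computable tree $W$ finishes the contradiction. In the second half the only subtlety is recognizing that the König-style fusion producing $y$ cannot escape $[T]$, precisely because $T$ is downward closed, so the unboundedness of deficiency along $y$ is genuinely fatal.
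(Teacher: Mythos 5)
Your argument for the ``in particular'' clause is correct and self-contained: the fusion of a chain $\tau_0\prec\tau_1\prec\cdots$ in $T$ with deficiency of $\tau_k$ at least $k$ does produce a nonrandom path of $T$, so a path-random tree must have a tail of finite deficiency. The problem is the first half, which is where the lemma's actual content lies.

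The gap is in your opening reduction, ``no tail of $T$ lies in $W$ $\Rightarrow$ $[W]\subseteq[T]$.'' The ``tail'' you exhibit inside $W$ is rooted at $z\restr_n$, a string that is \emph{not} a node of $T$ (indeed $\dbra{z\restr_n}\cap[T]=\emptyset$), so what lies in $W$ is only the finite chain of prefixes of $z\restr_n$ that happen to be in $T$. That is not a tail of $T$ in the sense the lemma requires: a tail is the subtree at a node $\sigma\in T$, hence infinite and pruned -- this is exactly what the ``in particular'' clause (a \emph{path-incompressible} tail) and the application in Theorem \ref{KmILdNfsur} need, since the tail must carry the paths and branching of $T$ above $\sigma$. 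Under your degenerate reading the first sentence collapses to the triviality ``$[W]\not\subseteq[T]$'' (which already follows from the fact that a pruned computable $W$ has a computable, hence nonrandom, path), and your own closing remark that ``positivity plays no role'' is the giveaway that you have proved a different statement. Positivity is essential: take $W$ to be the computable, pruned, null tree of prefixes of $0^\omega$; no path-random $T$ has a genuine tail inside this $W$ (a tail at $\sigma\in T$ contains arbitrarily long nodes of $T$, all prefixes of random paths, which cannot all be of the form $0^n$), yet your argument would happily ``find'' one, since $[W]=\{0^\omega\}\not\subseteq[T]$. Note also the direction: the lemma asserts that part of $T$ sits inside $W$, whereas your contrapositive delivers $W\subseteq T$ -- a sign the argument is answering a different question.

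What the first sentence actually is, is a tree analogue of Ku\v{c}era's theorem that every random real has a tail inside every positive effectively closed class; proving it requires a measure-theoretic argument in which, if cofinally many nodes of $T$ escape $W$, the escape events are turned into a Martin-L\"of test covering some path of $T$, and this uses $\mu([W])>0$ in an essential way. None of that appears in your proposal. For what it is worth, the paper itself does not prove this lemma either: it imports it from \citep[Corollary 1.12]{treeout} and \citep{deniscarlsch21}, so the honest options are to cite those sources or to reproduce a Ku\v{c}era-style argument, not the reduction you gave.
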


\section{Perfect versus non-perfect path-incompressible trees}\label{P4zb649F}
We first prove Proposition \ref{jlTkH88T3Z} from \S\ref{1O8WCJrzWz}:
\begin{propa}[Proposition \ref{jlTkH88T3Z}]\ 
\begin{enumerate}[\hthree(a)]
\item There exists a low path-incompressible tree with two paths which computes a perfect path-random tree.
\item There exists an effectively proper path-random tree which does not compute any perfect path-random tree.
\end{enumerate}
\end{propa}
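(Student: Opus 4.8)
For part (a), the plan is to start from a low random real and build a tree with exactly two paths, both of finite deficiency, which still manages to compute a perfect path-random tree. First I would take a real $x$ that is Martin-L\"of random and low (such reals exist by the low basis theorem applied to a $\Pi^0_1$ class of randoms). The tree $T$ will be the tree of prefixes of $x$ together with the tree of prefixes of a second path $y$, where $y$ is obtained from $x$ by some simple computable perturbation, say flipping $x$ at a single early coordinate, so that $y$ is also random (randomness is invariant under finite modifications) and $T=\{\sigma : \sigma\preceq x \text{ or } \sigma\preceq y\}$ is pruned with exactly two paths. Since $x$ is low, $T\equiv_T x$ is low. It remains to see that $x$ computes a perfect path-random tree: this is where I would invoke the Ku\v{c}era--G\'acs style coding. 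Since $x$ is random and we only need a \emph{path-random} (not path-incompressible) perfect tree, the idea is that from a single random real one can peel off infinitely many independent ``blocks'' of randomness and use them to fill out a perfect tree whose paths are all random; more carefully, one uses the fact that $x$ passes all Martin-L\"of tests to define, computably in $x$, a perfect tree $S$ such that the measure of non-random paths through any subtree is controlled, so $[S]$ consists entirely of randoms. The cleanest route is probably to note $x$ computes a sequence of van Lambalgen-independent randoms and assemble them; I expect the bookkeeping that every path of $S$ remains random to be the only delicate point here, handled by a union bound over tests together with Lemma \ref{cbce2bda}.

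For part (b), the plan is a direct diagonalization/forcing construction of an effectively proper path-random tree $T$ that kills every potential Turing functional $\Phi_e$ mapping $T$ to a perfect path-random tree. I would build $T$ as a ``thick but skeletal-in-spirit'' tree: effectively proper means I keep a computable unbounded lower bound on $|T\cap 2^n|$, so I commit in advance to, say, $|T\cap 2^n|\ge g(n)$ for a slowly growing computable $g$, and construct $T$ in stages adding nodes to meet this quota while preserving path-randomness via a Solovay-test / Lemma \ref{cbce2bda} argument (ensuring $\sum_{\sigma\in V_n}2^{-|\sigma|}$ is small for the ``bad'' sets $V_n$ of nodes we might add). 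Against $\Phi_e$: we want to guarantee that $\Phi_e^T$, if total and a tree, fails to be perfect path-random — either by forcing a node of $\Phi_e^T$ to have at most one extension (non-perfect), or by forcing some path of $\Phi_e^T$ to be non-random. The standard trick is that since $\Phi_e$ has to produce branching in its output tree from the oracle $T$, and $T$ is itself only ``thin'' (sub-exponentially many nodes per level), a counting/measure argument shows $\Phi_e^T$ cannot branch densely enough to be perfect while keeping all paths random; one makes this precise by a finite-extension argument choosing the next chunk of $T$ to drive some $\Phi_e^T$-path into a prepared Martin-L\"of test, or to stunt a branching node.

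The main obstacle I anticipate is in (b): reconciling the three competing demands on $T$ — it must be \emph{path-random} (a $\Pi^0_2$-ish global condition that forces us to keep all the ``extra'' nodes we add for effective properness from accumulating too much weight in any test), it must be \emph{effectively proper} (forcing us to add nodes we might otherwise want to omit for diagonalization), and it must diagonalize against \emph{all} $\Phi_e$ with \emph{no} restriction on the density of the output tree's branching. The resolution I would pursue is to make the diagonalization cost exponentially small in $e$ (so the total randomness deficiency injected stays bounded, via $\sum_e 2^{-e}<\infty$ and Lemma \ref{cbce2bda}) and to exploit that an effectively-proper-but-sparse oracle simply does not contain enough information to let $\Phi_e$ generate a perfect output tree all of whose paths are random — essentially a relativized version of the Bienvenu--Porter result quoted in the introduction that incomplete randoms do not compute effectively-proper path-incompressible trees, adapted here to the path-random, arbitrary-branching setting by a direct test construction rather than by appeal to that theorem.
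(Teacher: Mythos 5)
Your plan for part (a) fails at its central step: a low Martin-L\"of random real cannot compute a perfect path-random tree. Low reals are incomplete, and the result of \citet{treeout} quoted in the introduction (and the related result of \citealp{deniscarlsch21}) says precisely that incomplete randoms do not compute perfect path-random trees; so no amount of ``peeling off van Lambalgen-independent blocks'' of a single low random $x$ can assemble a perfect tree all of whose paths are random, and your tree $T\equiv_T x$ (two paths differing by a finite perturbation) computes nothing more than $x$ does. The whole point of part (a) is that the computational power must come from the \emph{pair} of paths jointly, not from either one: the paper takes a low $\PA$ real $z$ and invokes \citet{BLNg08} to write $z\equiv_{tt}x\oplus y$ with $x,y$ random; the tree with paths $x,y$ is then low and path-incompressible, and since its two paths jointly tt-compute a $\PA$ degree, it tt-computes a perfect path-incompressible tree. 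Without some such splitting result your construction cannot be repaired.

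For part (b), your proposed direct diagonalization is not a proof but a restatement of the difficulty. The assertion that ``a counting/measure argument shows $\Phi_e^T$ cannot branch densely enough to be perfect while keeping all paths random'' is exactly the hard content at stake: there is no restriction on the branching density or on the oracle-use of $\Phi_e$, and this is why the paper's own Theorem~\ref{dad73d1d} only handles functionals with a computable bound on the oracle-use, after a substantial hitting-cost argument. Your sketch gives no mechanism for forcing a path of $\Phi_e^T$ into a test, or for stunting a branching node, that survives arbitrary use and arbitrary output density, while simultaneously keeping $T$ effectively proper and path-random (and Lemma~\ref{cbce2bda} is a tool for certifying compressibility of enumerated strings, not for preserving randomness of the tree you are building). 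The paper avoids the construction entirely: take $z$ random relative to $\emptyset'$, let $T_z$ be the pruned tree with $[T_z]=\sqbrad{\tau 1\ast z}{\tau 0\prec z}$, which is effectively proper, path-random, and tt-equivalent to $z$; then quote \citet{treeout} that such a $z$ (being an incomplete random) computes no perfect path-random tree. If you insist on a self-contained argument for (b), you are in effect undertaking to reprove that theorem, which your outline does not do.
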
\begin{proof}
For (a), let $z$ be a low  $\PA$ real. By \citep{BLNg08} there exist random $x,y$ with $z\equiv_{tt} x\oplus y$.
Since every $\PA$ real truth-table computes a perfect path-incompressible tree, 
the pruned tree consisting of paths $x,y$ has the required properties. 

For (b), let $z$ be random relative to $\emptyset'$ so
the pruned tree $T_z$ determined by
\[
[T_z]=\sqbrad{\tau1 \ast z}{\tau0\prec z}
\]
is effectively proper and path-random.
By the $\emptyset'$-randomness of $z$ and \citep{treeout},  $z$  does not compute any perfect path-random tree.
Since $T_z\equiv_{tt} z$, the same is true of $T_z$.
\end{proof}
By \citep{deniscarlsch21,treeout} every perfect path-random tree computes a path-incompressible tree. So 
Theorem \ref{dad73d1d} is a direct consequence of the following Theorem by letting $f$ be a function that dominates all computable functions.

\begin{thm}\label{4ab6fec4}
For any $f$, there is a path-incompressible proper tree $T$ computable in $\emptyset'\oplus f$, such that $T$ does not compute any path-incompressible perfect tree within \ou $f$.
\end{thm}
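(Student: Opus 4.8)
The plan is to build $T$ by a forcing/priority construction relative to $\emptyset'\oplus f$, diagonalizing against all Turing functionals $\Phi_e$ with oracle-use bounded by $f$. We want to simultaneously guarantee: (1) $T$ is pruned and effectively proper — in fact we will make $\abs{T\cap 2^n}$ grow at a fixed computable rate, say $T$ will be $\ell$-perfect for some fixed slow computable $\ell$; (2) $T$ has finite deficiency, i.e.\ $K(\sigma)\geq\abs{\sigma}-c$ for all $\sigma\in T$ and some constant $c$; (3) for each $e$, if $\Phi_e^T$ (with use $\leq f$) is total and defines a perfect tree, then that tree has a path of unbounded deficiency, contradicting path-incompressibility. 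The requirements (2) and (3) pull in opposite directions: to kill $\Phi_e^T$ as a path-incompressible perfect tree we must at some stage be willing to push a node of $\Phi_e^T$ to have large deficiency, which we can only do by making a node of $T$ itself compressible — but we are forbidden from doing that. The resolution, as in the Bienvenu--Porter-style arguments cited, is to exploit that $\Phi_e^T$ being \emph{perfect} means it branches everywhere, so it has exponentially many nodes at each level, while its \emph{paths} must all be random: we will use a counting/measure argument (a "missing-set" in the paper's terminology) showing that some branch of the alleged $\Phi_e^T$ must receive too much weight from a Kraft--Chaitin request set that we, the builder, can afford to enumerate precisely \emph{because} $T$ is only required to be $\ell$-perfect (sparse branching) and hence the inverse images under $\Phi_e$ of the many branches of $\Phi_e^T$ cannot all be "fat" inside $T$.

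More concretely, here are the steps in order. First, fix the target density function $\ell$ (e.g.\ $\ell_n = 2^n$ or any function with $\sum_n 2^{-(\ell_{n+1}-\ell_n)}<\infty$, so Theorem~\ref{ElRZki9oa} does not obstruct us). We construct a nested sequence of finite pruned trees $F_0\prec F_1\prec\cdots$ with $T=\bigcup_s F_s$, where at $\ell$-levels we always keep exactly two successors so effective properness is automatic. Second, set up for each $e$ a module $\RR_e$ whose job is: using the oracle $\emptyset'$, decide whether $\Phi_e$ with use $f$ looks like it is computing a perfect tree along the current $F_s$; if so, identify a finite antichain $B\subseteq\Phi_e^{F_s}$ of branching nodes and, using the $\emptyset'\oplus f$ oracle, locate the portions of $F_s$ (the "preimages") that $\Phi_e$ reads in order to output the various extensions of $B$. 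Third — the heart of the argument — because $T$ is sparse, the preimages of the (at least $2^k$) length-$k$ descendants of $B$ under a use-$f$ functional must overlap heavily in $T$: there is a single node $\nu\in F_s$ that lies below the preimages of a $1/\mathrm{poly}$ fraction of those descendants. We then commit $T$ to pass through $\nu$ (a legal extension, since we're only fixing one successor at non-$\ell$ levels) and enumerate a Kraft--Chaitin request that compresses the \emph{outputs} — i.e.\ the nodes of $\Phi_e^T$ below $B$ — by an amount $n$, spending total weight $\leq 2^{-2n}$; here we invoke Lemma~\ref{cbce2bda} to conclude $K\leq\abs{\cdot}-n+O(1)$ on these outputs. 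This is affordable precisely because many output-nodes share the small description "$\nu$ together with $\Phi_e$", so the weight we spend is governed by $\mu(\nu)$, not by the (much larger) number of output nodes. Letting $n\to\infty$ over the requirements (with the usual priority bookkeeping so each $\RR_e$ acts only finitely often and the deficiency of $T$ itself stays bounded by a fixed $c$), we get that $\Phi_e^T$, if total and perfect, has branches of unbounded deficiency, hence is not path-incompressible.

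The main obstacle I expect is the interaction of step three with requirement (2): when $\RR_e$ acts it commits $T$ to go through some node $\nu$, and a later requirement $\RR_{e'}$ may want to commit to a node incompatible with $\nu$, forcing a reset; we must ensure the total compression ever injected into $T$ itself (as opposed to into the outputs $\Phi_e^T$) is $O(1)$. This is handled by the standard device of giving $\RR_e$ a weight budget $2^{-e}$ for "mistaken" commitments and arguing that once $\emptyset'$ confirms $\Phi_e^T$ really is branching in the limit, only one final commitment survives; the $\emptyset'$ oracle is exactly what lets us avoid injury from non-halting or non-branching guesses. A secondary subtlety is making the whole construction run in $\emptyset'\oplus f$: deciding totality of $\Phi_e$ with use $f$ on a given node is $\Pi^0_1(f)$, hence $\emptyset'\oplus f$-computable, and locating the overlap node $\nu$ is a finite search once the relevant use-values $f(0),\dots,f(k)$ are known. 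I would also need to double-check that "$\Phi_e^T$ defines a perfect tree" can be replaced, without loss, by "$\Phi_e^T$ defines a pruned tree in which some fixed node branches", since it is only the branching that drives the counting; perfectness gives branching densely, which is more than enough.
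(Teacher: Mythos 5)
Your proposal fails at the outset because of the shape you give $T$: you make $T$ $\ell$-perfect for a \emph{fixed computable} $\ell$, keeping two successors at the $\ell$-levels. Such a $T$ is itself a path-incompressible perfect tree, and it computes itself via the tree functional $\Phi(T,n)=T\restr_{\ell_n}$, whose oracle-use is the computable function $n\mapsto \ell_n$. Since the theorem is quantified over arbitrary $f$ --- and in the intended application, deriving Theorem \ref{dad73d1d}, $f$ dominates every computable function --- your $T$ would compute a path-incompressible perfect tree within oracle-use $f$, so the conclusion is violated before any diagonalization begins. The paper avoids exactly this by making $T$ \emph{skeletal}: an $(l_i)$-branching tree with a single main branch, which is (effectively) proper but nowhere perfect off that branch; this sparse structure is moreover what the measure argument later exploits, and it makes path-incompressibility automatic, since $T$ is built inside a fixed $\Pi^0_1$ class $Q$ of reals of bounded deficiency. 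Consequently your whole priority/injury apparatus with a ``compression budget'' for $T$ is both unnecessary and a symptom of the wrong setup: the paper's construction is plain finite-extension forcing relative to $\emptyset'\oplus f$, with no injury.

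The heart of your argument (step three) also does not hold up. The oracle is the tree $T$ itself, so all length-$k$ nodes of $\Phi_e(T)$ are computed from the \emph{same} oracle segment $T\restr_{f(k)}$; there is no assignment of distinct ``preimage nodes'' of $T$ to distinct output nodes whose overlap could concentrate at a single $\nu$, and the correct object $\Phi^{-1}(\sigma)$ is a class of \emph{trees}, not a set of strings. Likewise ``many output nodes share the short description $\nu$ plus $\Phi_e$'' cannot compress them all: a description determines one string, so you must pay per output node, and you give no accounting showing the Kraft--Chaitin weight stays below $2^{-2n}$; committing $T$ through $\nu$ changes nothing about this. The paper's affordability comes from a different mechanism, a dichotomy on the hitting cost $c_Q$ of the $\Pi^0_1$ classes $\TT(Q,F)\cap\Phi^{-1}(\sigma)$ of skeletal trees: if these costs are uniformly bounded by $2^{r-|\sigma|}$, the envelope construction (Lemmas \ref{96cab15f}--\ref{2d7bbb1e}) together with Lemma \ref{0240295d} forces $\TT(Q,F)=\emptyset$, contradicting the construction; otherwise some cost is large, and an enumeration whose total weight is paid for by measure extracted from $Q$ (Lemma \ref{59679e46}) yields a compressible $\sigma$ with $\TT(Q,F)\cap\Phi^{-1}(\sigma)\neq\emptyset$, after which one forces a finite extension of $T$ witnessing $\sigma\in\Phi_e(T)$. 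None of this machinery, nor any workable substitute for it, is present in your sketch, so the central step --- showing the image tree must contain strings of arbitrarily large deficiency --- remains unproved.
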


Toward the proof of Theorem \ref{4ab6fec4}, we set up some notations of objects that we shall use in this section. To do this, we introduce the following Lemma, which is Lemma 2.5 in \cite{codico} by setting $m_i=1$ and $L_i=l_i$. With this Lemma we are able to get a tree with fast branching speed inside any non-empty $\Pi^0_1$ class.

\begin{lem}
	\label{af272f1b}
	Let $P$ be a $\Pi^0_1$ class and $(l_i)$ be an increasing computable sequence of positive integers. If $\sum_i 2^{l_i-l_{i+1}}<\mu(P)$, then there is a $\Pi^0_1$ class $Q\subset P$ such that each string $\sigma$ of length $l_i$ with $\dbra{\sigma}\cap Q\neq\emptyset$ has at least $2$ extensions $\tau$ of length $l_{i+1}$ such that $\dbra{\tau}\cap Q\neq\emptyset$.
\end{lem}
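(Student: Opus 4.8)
The plan is to construct $Q$ as a nested intersection of $\Pi^0_1$ subclasses of $P$, where at stage $i$ we "prune away" the branching that exceeds our budget, and argue that the measure we discard is small enough that $Q$ stays non-empty and retains the desired $2$-branching at every level $l_i$. Concretely, I would build a uniformly computable decreasing sequence $P = P_0 \supseteq P_1 \supseteq \cdots$ of $\Pi^0_1$ classes and set $Q = \bigcap_i P_i$ (intersecting also with the original $P$, though $P_0=P$ already handles that). At step $i$, having $P_i$, we look at the strings $\sigma$ of length $l_i$ with $\dbra{\sigma}\cap P_i\neq\emptyset$; for each such $\sigma$ we wait (in the enumeration of the complement of $P_i$) to see which extensions $\tau$ of length $l_{i+1}$ still have $\dbra{\tau}\cap P_i\neq\emptyset$, and we want to keep at least two. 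The subtlety is that a naive "keep the first two that appear" rule is not effective, since we never know a string will stay non-empty. Instead I would use the standard trick: if at some stage only one extension $\tau$ of $\sigma$ of length $l_{i+1}$ still looks alive, we simply delete $\sigma$ (put $\dbra{\sigma}$ into the complement of $P_{i+1}$); this is a $\Pi^0_1$ (in fact $\Sigma^0_1$-driven) action and keeps $P_{i+1}$ a $\Pi^0_1$ class. A string $\sigma$ of length $l_i$ surviving into $P_{i+1}$ then automatically has $\geq 2$ extensions of length $l_{i+1}$ alive in $P_{i+1}$ — but we must check those two are themselves not deleted at later stages, which is why the bookkeeping is done top-down and the measure bound is used.

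The heart of the argument is the measure accounting. At step $i$, the nodes $\sigma$ of length $l_i$ that get deleted are exactly those that end up with at most one live extension of length $l_{i+1}$; I would bound the total measure lost at step $i$ by $2^{l_i - l_{i+1}}$ times (a bound on the number of relevant nodes), more precisely: each deleted $\sigma$ of length $l_i$ had $\dbra{\sigma}\cap P_i$ contained in a single cylinder $\dbra{\tau}$ with $|\tau|=l_{i+1}$, so $\mu(\dbra{\sigma}\cap P_i)\leq 2^{-l_{i+1}}$; summing over the at most $2^{l_i}$ such $\sigma$ gives total loss at step $i$ at most $2^{l_i}\cdot 2^{-l_{i+1}} = 2^{l_i - l_{i+1}}$. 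Hence
\[
\mu(Q) \;\geq\; \mu(P) - \sum_i 2^{l_i - l_{i+1}} \;>\; 0
\]
by hypothesis, so $Q$ is non-empty. It remains to verify the branching property survives: if $\sigma$ with $|\sigma|=l_i$ has $\dbra{\sigma}\cap Q\neq\emptyset$, then $\sigma$ was never deleted, so at every stage $j\geq i+1$ it still had $\geq 2$ live extensions of length $l_{i+1}$; since only finitely many length-$l_{i+1}$ strings extend $\sigma$ and each is either permanently alive or eventually deleted, at least two of them are permanently alive, i.e. have non-empty intersection with $Q$. (A small point: one should check these two extensions $\tau$ themselves intersect $Q$ and not merely $P_{i+1}$ — but a length-$l_{i+1}$ node that is alive in all $P_j$ has, by definition of the intersection, $\dbra{\tau}\cap Q\neq\emptyset$ provided it is never deleted; the deletions at later stages $\geq i+1$ operate on nodes of length $l_j\geq l_{i+1}$, so whether $\tau$ survives is precisely whether $\dbra{\tau}\cap Q\neq\emptyset$.)

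The main obstacle I anticipate is making the "delete $\sigma$ when it has fewer than two live extensions" rule genuinely $\Pi^0_1$ and simultaneously consistent across stages — one has to be careful that deleting a node of length $l_i$ at step $i$ does not cascade in a way that destroys, retroactively, the branching guaranteed at some earlier level $l_{i'}$ with $i'<i$. This is handled by noting that deleting $\sigma$ (of length $l_i$) only removes measure below $\sigma$, so a node $\sigma'$ of length $l_{i'}<l_i$ loses at most the contribution of $\sigma$; but in the accounting above the step-$i$ loss is already charged globally, and the key structural fact is that we only ever delete a node when it is down to a single live child, so the deletion of $\sigma$ at step $i$ does not reduce any ancestor below two live children *at that ancestor's level* — if it did, that ancestor would have been deleted at its own (earlier) step. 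Formalizing this invariant — "at every stage, every node of length $l_{i'}$ that is still alive has $\geq 2$ live children of length $l_{i'+1}$, unless it is being deleted this stage" — by induction on stages is the one place where care is required; everything else is the routine measure sum displayed above together with the observation that a computable nested intersection of $\Pi^0_1$ classes is $\Pi^0_1$.
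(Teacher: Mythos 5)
The measure accounting, the non-emptiness of $Q=\bigcap_i P_i$, and the fact that $Q$ is $\Pi^0_1$ are all fine, but the verification of the branching property has a genuine gap, and it is exactly at the point you flagged as delicate. In your construction the decision to delete a node $\sigma$ of length $l_i$ is made only at step $i$, on the basis of $P_i$ and its stagewise approximation, and $\sigma$ is never revisited. Hence the invariant you invoke (``if a later deletion left an ancestor with fewer than two live children, that ancestor would have been deleted at its own earlier step'') is simply not maintained: the ancestor's test was evaluated before the later deletions occurred, so nothing forces it to be deleted retroactively. Concretely, take $\sigma$ of length $l_0$ such that $P\cap\dbra{\sigma}$ meets exactly two extensions $\tau_1,\tau_2$ of length $l_1$, with $P\cap\dbra{\tau_1}=\dbra{\tau_1}$ a full cylinder and $P\cap\dbra{\tau_2}$ a single real, and $P$ fat elsewhere so that $\sum_i 2^{l_i-l_{i+1}}<\mu(P)$. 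At step $0$ both $\tau_1,\tau_2$ are genuinely alive, so $\sigma$ is never deleted; at step $1$, $\tau_2$ is eventually seen to have at most one live extension of length $l_2$ and is deleted; nothing below $\tau_1$ is ever deleted. Then $\dbra{\sigma}\cap Q\supseteq\dbra{\tau_1}\neq\emptyset$, yet $\tau_1$ is the only length-$l_1$ extension of $\sigma$ meeting $Q$, so the conclusion of the lemma fails for this $Q$. (Your parenthetical reduction ``alive in every $P_j$ implies meets $Q$'' is correct by compactness of nested nonempty closed sets; the failure is that only one of the two witnesses produced at step $i$ need stay alive at later steps.)

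The natural repair keeps your measure bookkeeping but makes the pruning dynamic rather than level-by-level: run a single co-enumeration in which, at every stage, any string $\sigma$ of any length $l_i$ whose currently live extensions of length $l_{i+1}$ number at most one is removed, with removals fed back into the approximation so that deletions cascade upward. Branching is then immediate by compactness: if all but one length-$l_{i+1}$ extension of $\sigma$ missed $Q$, each would eventually be observed dead, so $\sigma$ itself would be removed, contradicting $\dbra{\sigma}\cap Q\neq\emptyset$. Your measure estimate survives verbatim, since at the moment $\sigma$ of length $l_i$ is removed the newly discarded part of the approximation lies in a single length-$l_{i+1}$ cylinder, giving total loss at most $\sum_i 2^{l_i}\cdot 2^{-l_{i+1}}<\mu(P)$. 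Alternatively one can avoid stages altogether with a threshold construction, keeping $\sigma$ of length $l_i$ only if $\mu(P\cap\dbra{\sigma})\geq 2^{-l_i}\sum_{j\geq i}2^{l_j-l_{j+1}}$, which yields the two-extension property by a direct counting argument. For comparison with the source: the paper does not prove this lemma at all, but quotes it as Lemma 2.5 of the cited Barmpalias--Lewis-Pye paper, so your task was genuinely to reconstruct an argument; the reconstruction is close in spirit but needs one of the above fixes to be correct.
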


We fix $P$ to be the set of reals with deficiency no more than $d$, where $d$ is large enough so $P$ is non-empty. We also fix $l_i=\sum_{j\leq i}(2j+c)$ where $c$ is large enough so the condition of Lemma \ref{af272f1b} is satisfied. We then fix $Q$ as guaranteed by Lemma \ref{af272f1b}.

Given a skeletal tree $T$ with main branch $z$, we say that $T$ is $(l_i)$-branching if $|\sigma|=l_i$ and $\sigma\prec z$ implies that $\sigma$ branches in $T\cap 2^{\leq l_{i+1}}$. Let $\TT$ be the class of all $(l_i)$-branching skeletal trees, which is a $\Pi^0_1$ class.

Let $\TT(Q)=\{T\in\TT:[T]\subset Q\}$ and for a finite tree $F$, let $\TT(Q,F)=\{T\in\TT(Q):F\prec T\}$. Since $Q$ is a $\Pi^0_1$ class of reals, $\TT(Q)$ and $\TT(Q,F)$ are $\Pi^0_1$ classes of trees. Lemma \ref{af272f1b} guarantees that $\TT(Q)$ is non-empty.

\subsection{Hitting cost}

We would like to use a measure on $\CC$. The set of all random reals has measure $1$ in the Cantor space, but unfortunately the set of all path-incompressible trees has uniform measure $0$ in $\CC$ \cite{treeout}. Therefore we study and use the hitting cost to serve as an alternative.

\begin{defi}
	Given a set of trees $\GG\subset\TT(Q)$ and a set of finite strings $H$, we say that \textit{$H$ hits $\GG$} if $T\cap H\neq\emptyset$ for any $T\in\GG$. The \textit{hitting-cost} of $\GG$ relative to $Q$, denoted by $c_Q(\GG)$, is $\inf\mu(\dbra{H}\cap Q)$ over all $H$ that hits $\GG$.
\end{defi}

Observe that $T\cap H\neq\emptyset$ if and only if $[T]\cap\dbra{H}\neq\emptyset$, so if $\dbra{H_1}=\dbra{H_2}$ then $H_1$ hits $\GG$ if and only if $H_2$ hits $\GG$.
We then use compactness to show that when working with a closed set of trees, we can take everything to be finite.

\begin{lem}
	\label{13958859}
	If $\GG\subset\TT(Q)$ is a closed set of trees and $H$ hits $\GG$, then there is a finite subset of $H$ that hits a clopen superset of $\GG$.
\end{lem}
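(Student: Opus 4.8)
The statement is a compactness argument: I want to extract from an infinite hitting set $H$ a finite sub-collection that still hits a clopen neighborhood of the closed set $\GG \subset \TT(Q)$. The plan is to exploit that $\CC$ is compact (stated in the excerpt), that $\GG$, being closed in a compact space, is itself compact, and that the sets $\dbra{\sigma}$ (for $\sigma$ a finite string) induce natural clopen subsets of $\CC$ via the "hitting" relation.

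First I would translate the hypothesis into a covering statement. For each finite string $\sigma$, consider the set $\mathcal{O}_\sigma := \setbra{T \in \CC}{\sigma \in T}$. Since membership $\sigma \in T$ depends only on the prefix $T\restr_{|\sigma|}$, each $\mathcal{O}_\sigma$ is clopen in $\CC$: it is a finite union of basic sets $\dbra{F}$ over the finitely many height-$|\sigma|$ pruned trees $F$ with $\sigma \in F$. Now $H$ hits $\GG$ means precisely that every $T \in \GG$ has some $\sigma \in H$ with $\sigma \in T$, i.e. $\GG \subseteq \bigcup_{\sigma \in H} \mathcal{O}_\sigma$. So $\{\mathcal{O}_\sigma : \sigma \in H\}$ is an open cover of $\GG$.

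Next I would invoke compactness. Because $\CC$ is compact and $\GG$ is closed, $\GG$ is compact, so finitely many $\sigma_1,\dots,\sigma_m \in H$ suffice: $\GG \subseteq \mathcal{O}_{\sigma_1} \cup \cdots \cup \mathcal{O}_{\sigma_m}$. Setting $H_0 := \{\sigma_1,\dots,\sigma_m\}$, this is a finite subset of $H$, and $H_0$ hits $\GG$ since $\GG \subseteq \bigcup_{\sigma \in H_0}\mathcal{O}_\sigma$. Moreover the set $\GG^* := \bigcup_{\sigma \in H_0}\mathcal{O}_\sigma$ is clopen (a finite union of clopen sets), contains $\GG$, and $H_0$ hits $\GG^*$ by construction — so $\GG^*$ is the required clopen superset. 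One should also intersect with $\TT(Q)$ or note that the statement only asks for a clopen superset of $\GG$ within the ambient space; if it must lie inside $\TT(Q)$, replace $\GG^*$ by $\GG^* \cap \TT(Q)$, which is still relatively clopen in $\TT(Q)$ and is hit by $H_0$.

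The only point requiring care — and the step I'd expect to be the mild obstacle — is verifying that $\mathcal{O}_\sigma$ is genuinely clopen in $\CC$ given the paper's convention that finite trees are themselves assumed "pruned" (prefixes of infinite pruned trees). I would argue: for $T \in \CC$, $\sigma \in T \iff \sigma \in T\restr_{|\sigma|}$, and $T\restr_{|\sigma|}$ ranges over a \emph{finite} set of admissible height-$|\sigma|$ trees; the condition "$\sigma \in T\restr_{|\sigma|}$" partitions these finitely many possibilities, so $\mathcal{O}_\sigma = \bigcup\{\dbra{F} : F \text{ height } |\sigma|,\ \sigma \in F\}$, a finite union of basic clopen sets, hence clopen. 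Everything else is a routine unwinding of definitions plus one application of compactness.
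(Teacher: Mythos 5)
Your proof is correct, and its second half takes a genuinely different (and shorter) route than the paper's. The first step coincides: both you and the paper observe that each set $\{T:\sigma\in T\}$ is clopen (you spell out why, via the finitely many admissible height-$|\sigma|$ prefixes), rewrite ``$H$ hits $\GG$'' as $\GG\subseteq\bigcup_{\sigma\in H}\{T:\sigma\in T\}$, and extract a finite subcover $H_0$ using compactness of the closed set $\GG$ in the compact space $\CC$. Where you diverge is in producing the clopen superset: the paper runs a \emph{second} compactness argument, writing $\GG=\CC-\bigcup_{i\in I}\dbra{F_i}$ and covering the clopen set $\CC-\bigcup_{\sigma\in H_0}\{T:\sigma\in T\}$ by finitely many $\dbra{F_i}$, so that the superset obtained has the special form $\CC-\bigcup_{i\leq n}\dbra{F_i}$; you instead note that $\GG^*:=\bigcup_{\sigma\in H_0}\{T:\sigma\in T\}$, i.e.\ the set of all trees hit by $H_0$, is itself clopen, contains $\GG$ because $H_0$ hits $\GG$, and is hit by $H_0$ tautologically. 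Your observation shows the ``clopen superset'' clause is automatic once a finite subset hits $\GG$, which makes the lemma essentially a one-compactness-application statement; the paper's version buys a superset presented as the complement of finitely many basic open sets, a form that matches how one enumerates clopen supersets of a $\Pi^0_1$ class in the application (Lemma \ref{0240295d}), but since that application only invokes the statement of the lemma, nothing is lost by your argument. Your closing remark about intersecting with $\TT(Q)$ is harmless but unnecessary: the statement (and the paper's own proof) take the clopen superset in the ambient space $\CC$.
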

\begin{proof}
	We first prove that there is a finite subset of $H$ that hits $\GG$. $\GG$ is a closed set in a compact space so it is also compact. Then note that
	\begin{align*}
		H\text{ hits }\GG
		&\iff \forall T\in\GG,H\cap T\neq\emptyset \\
		&\iff \GG\subset\{T:H\cap T\neq\emptyset\} \\
		&\iff \GG\subset\bigcup_{\sigma\in H}\{T:\sigma\in T\}
	\end{align*}
	and that each $\{T:\sigma\in T\}$ is clopen. Now $\bigcup_{\sigma\in H}\{T:\sigma\in T\}$ is an open cover of $\GG$, so it contains a finite cover of $\GG$. The indices of this finite cover is a finite subset of $H$ that hits $\GG$.

	We then assume that $H$ itself is finite, and prove that it hits a clopen superset of $\GG$. Being a closed set, let $\GG=\CC-\bigcup_{i\in I}{\dbra{F_i}}$ where $F_i$ are finite trees. Then
	\begin{align*}
		H\text{ hits }\GG
		&\iff \GG\subset\bigcup_{\sigma\in H}\{T:\sigma\in T\} \\
		&\iff \CC-\bigcup_{i\in I}{\dbra{F_i}}\subset\bigcup_{\sigma\in H}\{T:\sigma\in T\} \\
		&\iff \CC-\bigcup_{\sigma\in H}\{T:\sigma\in T\}\subset\bigcup_{i\in I}{\dbra{F_i}}
	\end{align*}
	and each $\dbra{F_i}$ is also clopen. Now $\bigcup_{i\in I}{\dbra{F_i}}$ is an open cover of $\CC-\bigcup_{\sigma\in H}\{T:\sigma\in T\}$, which is clopen thus compact since $H$ is finite, so again $\bigcup_{i\in I}{\dbra{F_i}}$ contains a finite cover $\bigcup_{i\leq n}{\dbra{F_i}}$. Then $H$ hits $\CC-\bigcup_{i\leq n}{\dbra{F_i}}$ which is a clopen superset of $\GG$.
\end{proof}

\begin{lem}
	\label{0240295d}
	Let $\GG\subset\TT(Q)$ be a $\Pi^0_1$ class of trees. If $c_Q(\GG)=0$ then $\GG=\emptyset$.
\end{lem}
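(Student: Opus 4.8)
The plan is to prove the contrapositive: assuming $\GG\neq\emptyset$, show $c_Q(\GG)>0$. Fix once and for all some $T_0\in\GG$; since $T_0\in\TT(Q)$ we have $[T_0]\subseteq Q\subseteq P$, so \emph{every path of $T_0$ has deficiency $\le d$}. Suppose, for contradiction, that $c_Q(\GG)=0$. The idea is to manufacture from the hitting sets a genuine \ml test all of whose "top" levels contain paths of $T_0$, contradicting the fact that those paths have bounded deficiency. The essential use of the hypothesis that $\GG$ is $\Pi^0_1$ is that it makes the construction of the hitting sets, hence of the test, \emph{effective}; without effectiveness the statement is simply false (e.g.\ $c_Q(\{T_0\})=0$ for any single non-computable $T_0\in\TT(Q)$, via $H=\{z\restr n\}$ for a path $z$ of $T_0$).

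First I would set up the effectivity. By Lemma~\ref{13958859} the infimum defining $c_Q(\GG)$ is unchanged if we restrict to \emph{finite} hitting sets. For finite $H$, the predicate ``$H$ hits $\GG$'' is $\Sigma^0_1$ in $H$ --- it asserts emptiness of the $\Pi^0_1$ class $\GG\cap\bigcap_{\sigma\in H}\{T:\sigma\notin T\}$ --- while $\mu(\dbra H\cap Q)$ is upper semicomputable, because $\dbra H$ is clopen and $\twome\setminus Q$ is c.e.\ open; hence ``$H$ hits $\GG$ and $\mu(\dbra H\cap Q)<2^{-2k}$'' is $\Sigma^0_1$ in $(H,k)$, and since $c_Q(\GG)=0$ it has a witness for every $k$. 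Let $H^{(k)}$ be the first such $H$ found by searching. Fixing a computable tree $S$ with $[S]=Q$ and writing $Q_m:=\dbra{S\cap 2^m}$, a clopen set with $Q_m\supseteq Q$ and $Q_m\downarrow Q$, one then computably finds $m_k$ with $\mu(\dbra{H^{(k)}}\cap Q_{m_k})<2^{-2k}$ (the measure converges to $\mu(\dbra{H^{(k)}}\cap Q)$). Put $W_j:=\bigcup_{k\ge j}\big(\dbra{H^{(k)}}\cap Q_{m_k}\big)$: this is a uniformly c.e.\ open family with $\mu(W_j)\le\sum_{k\ge j}2^{-2k}<2^{-j}$, i.e.\ a \ml test.

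Next the contradiction. Since $H^{(k)}$ hits $\GG\ni T_0$, it contains a node $\rho_k$ of $T_0$; as $T_0$ is pruned and infinite, $\rho_k$ extends to a path $p_k\in[T_0]\subseteq Q$, so $p_k\in\dbra{H^{(k)}}\cap Q\subseteq\dbra{H^{(k)}}\cap Q_{m_k}\subseteq W_j$ for every $j\le k$. On the other hand, because $(W_j)_j$ is a \ml test, there is a constant $b$ (depending only on the fixed test, hence on $\GG$, $Q$ and $d$) such that no real of deficiency $\le d$ lies in $W_{d+b}$. Now take $k:=d+b$: then $p_k\in W_k$, yet $p_k\in Q\subseteq P$ has deficiency $\le d$ and so $p_k\notin W_k$ --- a contradiction. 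Therefore $c_Q(\GG)=0$ forces $\GG=\emptyset$.

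The step I expect to be most delicate is the first one. The raw neighbourhoods $\dbra{H^{(k)}}$ carry no measure bound at all --- only their traces on $Q$ do --- which is why one must intersect with the outer clopen approximations $Q_{m_k}$ before taking unions; and the hitting sets must be produced by one uniform procedure so that $(W_j)_j$ is a bona fide \ml test with an absolute index, which is exactly the point at which $\GG$ (and $Q$) being $\Pi^0_1$ enters. The remaining ingredients --- reducing to finite hitting sets via Lemma~\ref{13958859}, the upper semicomputability of $\mu(\dbra H\cap Q)$, and the quantitative \ml-test estimate (a routine application of the Kraft-Chaitin-Levin theorem, in the spirit of Lemma~\ref{cbce2bda}) --- are bookkeeping.
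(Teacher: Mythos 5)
Your proposal is correct and follows essentially the same route as the paper: use the $\Pi^0_1$-ness of $\GG$ and $Q$ together with Lemma \ref{13958859} to effectively search for finite hitting sets whose traces on clopen stage-approximations of $Q$ have measure $\leq 2^{-2k}$, then apply the Kraft--Chaitin--Levin machinery (Lemma \ref{cbce2bda}) to force a prefix of high deficiency on some path of a tree in $\GG$, contradicting $[T]\subseteq Q\subseteq P$. Your packaging of the final step as a Martin--L\"of test $(W_j)$ is only a cosmetic variation on the paper's direct application of Lemma \ref{cbce2bda} to the sets $\dbra{H_n}\cap Q_{s_n}$.
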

\begin{proof}
	For each $n$, since $c_Q(\GG)=0$ there is $H_n$ with $\mu(\dbra{H_n}\cap Q)\leq 2^{-2n-1}$ that hits $\GG$. By Lemma \ref{13958859}, there is a finite such $H_n$ that hits a clopen superset of $\GG$. Since $Q$ is $\Pi^0_1$, there is some stage $s_n$ in the approximation of $Q$ where $\mu(\dbra{H_n}\cap Q_{s_n})\leq 2^{-2n}$.
	
	We can effectively check if a finite set of strings hits a clopen set of trees, and the set of all clopen supersets of $\GG$ is \ce since $\GG$ is $\Pi^0_1$. Also given finite $H_n$ and $s_n$, we can effectively check if $\mu(\dbra{H_n}\cap Q_{s_n})\leq 2^{-2n}$. Therefore given $n$ we can effectively search for such an $H_n$ and $s_n$. In this way we get a computable sequence of $(H_n)$ with $(s_n)$ such that $\mu(\dbra{H_n}\cap Q_{s_n})\leq 2^{-2n}$ and that each $H_n$ hits $\GG$. Both $\dbra{H_n}$ and $Q_{s_n}$ are clopen, we let $V_n$ be finite and prefix-free such that $\dbra{V_n}=\dbra{H_n}\cap Q_{s_n}$, then $(V_n)$ is uniformly computable. By Lemma \ref{cbce2bda}, there is a constant $c$ such that $K(\sigma)\leq |\sigma|-n+c$ for all $\sigma\in V_n$, \ie each string in $V_n$ has deficiency at least $n-c$.
	
	Now suppose $\GG\neq\emptyset$ and take any $T\in\GG$. For each $n$, $H_n$ hits $\GG$ so $\dbra{H_n}\cap[T]\neq\emptyset$. Take any $x\in\dbra{H_n}\cap[T]$. As $[T]\subset Q\subset Q_{s_n}$, $x$ is also in $\dbra{H_n}\cap Q_{s_n}=\dbra{V_n}$. So $x$ has a prefix in $V_n$ with deficiency at least $n-c$. But $x$ is also in $Q$, so this prefix has deficiency at most $d$. Taking $n=d+c+1$ leads to a contradiction. Therefore $\GG=\emptyset$.
\end{proof}

\subsection{Tree-functionals}

We now clarify how does one tree compute another.

\begin{defi}
	A \textit{tree functional} is a Turing functional $\Phi$ that takes tree $T$ as an oracle and $n\in\omega$ as an input, and outputs a non-empty finite tree $\Phi(T,n)$, such that if $\Phi(T,n-1)$ and $\Phi(T,n)$ both halt, then
	\begin{itemize}
		\item $\Phi(T,n-1)\prec\Phi(T,n)$,
		\item each $\sigma\in\Phi(T,n-1)$ branches in $\Phi(T,n)$.
	\end{itemize}
	Let $\Phi(T)=\bigcup_n\Phi(T,n)$.
\end{defi}

Now if $\Phi(T,n)$ halts for all $n$, we say that $\Phi$ is total on $T$. In this case, $\Phi(T)$ is an infinite tree. The second condition also makes it automatically perfect. By induction, the second condition also implies that $\Phi(T,n)$ has height at least $n$.

There is a computable enumeration of indices of Turing functionals, such that each index is an index of a tree functional, and that at least one index for each tree functional is enumerated. Let $\Phi_e$ be the $e^{th}$ tree functional in this enumeration.

The \textit{oracle-use} of $\Phi(T,n)$ is the maximum $k$ such that $T\upharpoonright_k$ is accessed in the computation of $\Phi(T,n)$. By ``$\Phi(T,n)$ halts within \ou $k$'' we mean that $\Phi(T,n)$ halts and the \ou of $\Phi(T,n)$ is no more than $k$. Given a function $f:\omega\to\omega$, by ``$\Phi$ is total on $T$ within \ou $f$'' we mean that $\Phi(T,n)$ halts within \ou $f(n)$ for all $n$.

Given a string $\sigma$ and a tree functional $\Phi$, let $\Phi^{-1}(\sigma)=\{T:\sigma\in\Phi(T)\}$. Clearly $\Phi^{-1}(\sigma)$ is $\Sigma^0_1$, and the next Lemma shows that it is indeed clopen inside any closed set where $\Phi$ is total within bounded \ou.

\begin{lem}
	\label{a42e15c4}
	Let $\GG\subset\TT(Q)$ be a closed set of trees, $f$ a function and $\Phi$ a tree functional such that $\Phi$ is total on all $T\in\GG$ within \ou $f$. Then $\GG\cap\Phi^{-1}(\sigma)=\GG\cap\bigcup_i\dbra{F_i}$ where $(F_i)$ are finitely many trees with height $f(|\sigma|)$. In particular, $\GG\cap\Phi^{-1}(\sigma)$ is closed.
\end{lem}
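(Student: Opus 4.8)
The plan is to pin down membership of $\sigma$ in $\Phi(T)$, for $T\in\GG$, to a condition about the finite tree $T\cap 2^{\leq f(|\sigma|)}$, and then let $(F_i)$ enumerate the finitely many finite trees witnessing that condition.

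First I would check that, for $T\in\GG$,
\[
\sigma\in\Phi(T)\iff\sigma\in\Phi(T,|\sigma|).
\]
The direction $\Leftarrow$ is immediate from $\Phi(T)=\bigcup_n\Phi(T,n)$. For $\Rightarrow$, pick $m\geq|\sigma|$ with $\sigma\in\Phi(T,m)$ (possible since the finite trees $\Phi(T,n)$ are nested and increasing under $\prec$, which is transitive); as $\Phi(T,|\sigma|)\prec\Phi(T,m)$, we have $\Phi(T,|\sigma|)=\Phi(T,m)\cap 2^{\leq k}$ where $k$ is the height of $\Phi(T,|\sigma|)$, and $k\geq|\sigma|$ by the branching clause of the definition of tree functional, so $\sigma\in\Phi(T,m)\cap 2^{\leq k}=\Phi(T,|\sigma|)$.

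Next, since $\Phi$ is total on every $T\in\GG$ within \ou $f$, for $T\in\GG$ the computation $\Phi(T,|\sigma|)$ halts and queries only $T\restr_{f(|\sigma|)}=T\cap 2^{\leq f(|\sigma|)}$, a pruned finite tree of height $f(|\sigma|)$. Hence whether this computation halts with \ou $\leq f(|\sigma|)$ and returns an output containing $\sigma$ is determined by the finite tree $T\cap 2^{\leq f(|\sigma|)}$ alone. Let $F_1,\dots,F_n$ list all pruned finite trees $F$ of height $f(|\sigma|)$ such that $\Phi$, run with oracle $F$ on input $|\sigma|$, halts with \ou $\leq f(|\sigma|)$ and outputs a tree containing $\sigma$; this list is finite because there are finitely many subtrees of $2^{\leq f(|\sigma|)}$. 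I would then establish $\GG\cap\Phi^{-1}(\sigma)=\GG\cap\bigcup_i\dbra{F_i}$ by double inclusion: if $T\in\GG$ and $\sigma\in\Phi(T)$ then $\sigma\in\Phi(T,|\sigma|)$ with \ou $\leq f(|\sigma|)$, so $F:=T\cap 2^{\leq f(|\sigma|)}$ is one of the $F_i$ and $F_i\prec T$; conversely if $T\in\GG$ and $F_i\prec T$ then $\Phi(T,|\sigma|)$ reads exactly the oracle data $F_i$ and hence outputs a tree containing $\sigma$, giving $\sigma\in\Phi(T,|\sigma|)\subseteq\Phi(T)$.

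Finally, each $\dbra{F_i}$ is basic clopen, so $\bigcup_i\dbra{F_i}$ is clopen and $\GG\cap\Phi^{-1}(\sigma)=\GG\cap\bigcup_i\dbra{F_i}$ is closed. This is not a difficult argument, but the point that needs care is that the reduction to the finite oracle $T\cap 2^{\leq f(|\sigma|)}$ relies essentially on totality of $\Phi$ within \ou $f$, which is assumed only inside $\GG$; without that hypothesis $\Phi^{-1}(\sigma)$ is merely $\Sigma^0_1$ and need not be closed, so one must be careful to invoke $T\in\GG$ at exactly the right places.
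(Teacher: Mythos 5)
Your proof is correct and follows essentially the same route as the paper's: reduce membership in $\Phi^{-1}(\sigma)$ to the computation $\Phi(T,|\sigma|)$, invoke the oracle-use bound $f(|\sigma|)$ to localize everything to the finite tree $T\restr_{f(|\sigma|)}$, and observe there are only finitely many such trees of height $f(|\sigma|)$, each giving a basic clopen set. The only cosmetic difference is that the paper takes the $F_i$ to be exactly the prefixes $T\restr_{f(|\sigma|)}$ of trees $T\in\GG\cap\Phi^{-1}(\sigma)$ and sandwiches $\GG\cap\Phi^{-1}(\sigma)$ between that union and $\Phi^{-1}(\sigma)$, whereas you enumerate all height-$f(|\sigma|)$ oracles on which the computation halts with $\sigma$ in the output and argue by double inclusion.
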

\begin{proof}
	Take any $T\in\GG\cap\Phi^{-1}(\sigma)$. Since $\sigma\in\Phi(T)$ and $\Phi(T,|\sigma|)$ contains a string with length $|\sigma|$ and is a prefix of $\Phi(T)$, we have $\sigma\in\Phi(T,|\sigma|)$. The \ou of $\Phi(T,|\sigma|)$ is no more than $f(|\sigma|)$, so $\sigma\in\Phi(T')$ for any $T'\succ T\upharpoonright_{f(|\sigma|)}$, \ie $\dbra{T\upharpoonright_{f(|\sigma|)}}\subset\Phi^{-1}(\sigma)$. By taking the union over all $T$ in $\GG\cap\Phi^{-1}(\sigma)$, we get \[\GG\cap\Phi^{-1}(\sigma)\subset\bigcup_{T\in\GG\cap\Phi^{-1}(\sigma)}\dbra{T\upharpoonright_{f(|\sigma|)}}\subset\Phi^{-1}(\sigma).\] Now $T\upharpoonright_{f(|\sigma|)}$ is a finite tree and there are only finitely many possibilities for such a tree, so the middle term is $\bigcup_i\dbra{F_i}$ where $(F_i)$ are finitely many trees with height $f(|\sigma|)$. By intersecting $\GG$ with all sets in the above inclusion, we get $\GG\cap\Phi^{-1}(\sigma)=\GG\cap\bigcup_i\dbra{F_i}$, as desired.
\end{proof}

\begin{lem}
	\label{1604aefc}
	Let $F$ be a finite tree with height $h$, $f$ a function and $\Phi$ a tree functional such that $\Phi$ is total on all $T\in\TT(Q,F)$ within \ou $f$. Let $n$ be such that $l_n\geq h$ and that $l_n\geq f(|\sigma|)$. If $H'\subset 2^{l_{n+1}}$ hits $\TT(Q,F)\cap\Phi^{-1}(\sigma)$, then there is $H\subset 2^{l_n}$ with \[\mu((\dbra{H}-\dbra{H'})\cap Q)\leq 2^{l_n-l_{n+1}}\] that also hits $\TT(Q,F)\cap\Phi^{-1}(\sigma)$.
\end{lem}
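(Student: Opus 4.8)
Write $\GG:=\TT(Q,F)\cap\Phi^{-1}(\sigma)$. The idea is to pull the hitting set $H'$ down one block: a length-$l_n$ node $\rho$ of a tree $T\in\TT(Q)$ is forced to split into at least two of its \emph{$Q$-children} — length-$l_{n+1}$ extensions $\tau\succ\rho$ with $\dbra{\tau}\cap Q\neq\emptyset$ — only when $\rho$ lies on the main branch of $T$; off the main branch $\rho$ is passed through by a single path. So a node with two $Q$-children avoiding $H'$ can be routed around $H'$ inside $\dbra{\rho}$ and need not enter $H$, whereas nodes with at most one $Q$-child outside $H'$ must be kept, each adding at most one level-$l_{n+1}$ cylinder of fresh $Q$-measure; there are at most $2^{l_n}$ of them, matching $2^{l_n-l_{n+1}}$. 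First I would record a locality property: by Lemma~\ref{a42e15c4} applied to the closed set $\TT(Q,F)$ (on which $\Phi$ is total within \ou $f$), $\GG=\TT(Q,F)\cap\bigcup_i\dbra{F_i}$ for finitely many finite trees $F_i$ of height $f(|\sigma|)\le l_n$; since $F\prec T$ means $F=T\restr_h$ with $h\le l_n$, it follows that if $T\in\GG$, $T'\in\TT(Q)$ and $T'\restr_{l_n}=T\restr_{l_n}$, then $F\prec T'$ and $T'\restr_{f(|\sigma|)}$ equals some $F_i$, so $T'\in\GG$: membership in $\GG$ within $\TT(Q)$ depends only on the $l_n$-prefix.

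Then I would set
\[
H:=\setbra{\rho\in 2^{l_n}}{\dbra{\rho}\cap Q\neq\emptyset\ \text{ and }\ \rho\text{ has at most one }Q\text{-child not in }H'}.
\]
Because $l_n$ is a term of $(l_i)$, the defining property of $Q$ guarantees every $\rho$ with $\dbra{\rho}\cap Q\neq\emptyset$ has at least two $Q$-children. For the measure bound: if $\rho\in H$, every $x\in(\dbra{\rho}\setminus\dbra{H'})\cap Q$ has $x\restr_{l_{n+1}}$ equal to a $Q$-child of $\rho$ outside $H'$, so $(\dbra{\rho}\setminus\dbra{H'})\cap Q$ is contained in a single level-$l_{n+1}$ cylinder (or is empty), of measure $\le 2^{-l_{n+1}}$. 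Summing over the disjoint cylinders $\dbra{\rho}$, $\rho\in H$, gives $\mu\big((\dbra{H}\setminus\dbra{H'})\cap Q\big)\le|H|\cdot 2^{-l_{n+1}}\le 2^{l_n-l_{n+1}}$.

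It remains to show $H$ hits $\GG$. Suppose instead that some $T\in\GG$ has no length-$l_n$ node in $H$, and let $z$ be its main branch. Each length-$l_n$ node $\rho$ of $T$ has $\dbra{\rho}\cap Q\neq\emptyset$ (as $T$ is pruned with $[T]\subseteq Q$) and, lying outside $H$, has at least two $Q$-children outside $H'$. I then build $T'$ agreeing with $T$ up to level $l_n$: below the main-branch leaf $z\restr_{l_n}$, let it split within $2^{\le l_{n+1}}$ into exactly two of its $Q$-children $\tau_1\neq\tau_2$ with $\tau_1,\tau_2\notin H'$, continue $\tau_1$ into a skeletal $(l_i)$-branching tree with all paths in $Q$ (possible since each relevant length-$l_i$ node with $i\ge n+1$ has two $Q$-children, by the property of $Q$), and extend $\tau_2$ to a single path in $Q$; below every other length-$l_n$ leaf $\rho$ of $T\restr_{l_n}$, run a single path in $Q$ through a $Q$-child $\tau_\rho\notin H'$. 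One checks $T'$ is a skeletal $(l_i)$-branching tree with $[T']\subseteq Q$, \ie $T'\in\TT(Q)$; and $T'\restr_{l_n}=T\restr_{l_n}$, so $T'\in\GG$ by the locality property. But the length-$l_{n+1}$ nodes of $T'$ are exactly $\{\tau_1,\tau_2\}\cup\{\tau_\rho\}$, all avoiding $H'$, so $T'\cap H'=\emptyset$ — contradicting that $H'$ hits $\GG$. Hence $H$ hits $\GG$.

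The crux is this last construction, together with getting the definition of $H$ right. The essential point is that an off-main-branch leaf needs only one $Q$-child outside $H'$ to dodge $H'$, whereas the main-branch leaf is forced to branch (by skeletality and $(l_i)$-branching) and so needs \emph{two}; this is exactly why $H$ must keep nodes with at most one escaping $Q$-child, and why the error term is $2^{l_n-l_{n+1}}$ rather than $0$. Verifying that $T'$ genuinely belongs to $\TT(Q)$ and that no unintended length-$l_{n+1}$ string appears in it — in particular arranging the new branching of $z\restr_{l_n}$ to occur at level $l_{n+1}$ and to produce only $\tau_1,\tau_2$ — is routine but must be handled with some care.
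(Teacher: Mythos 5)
Your proof is correct and follows essentially the same route as the paper's: define $H$ as the level-$l_n$ strings from which at most a single level-$l_{n+1}$ "escape" from $\dbra{H'}$ inside $Q$ remains (the paper uses the equivalent measure threshold $\mu((\dbra{\tau}-\dbra{H'})\cap Q)\leq 2^{-l_{n+1}}$), count $|H|\cdot 2^{-l_{n+1}}\leq 2^{l_n-l_{n+1}}$, and, assuming some $T\in\TT(Q,F)\cap\Phi^{-1}(\sigma)$ misses $H$, reroute it above level $l_n$ into an $(l_i)$-branching skeletal $T'$ with $[T']\subseteq Q$ avoiding $H'$, using $f(|\sigma|)\leq l_n$ to keep $T'\in\Phi^{-1}(\sigma)$. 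Your explicit appeal to Lemma~\ref{a42e15c4} for the locality of $\Phi^{-1}(\sigma)$ is just a packaged form of the paper's direct observation that $\dbra{T\restr_{l_n}}\subset\Phi^{-1}(\sigma)$.
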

\begin{proof}
Let $H=\{\tau\in 2^{l_n}:\mu((\dbra{\tau}-\dbra{H'})\cap Q)\leq 2^{-l_{n+1}}\}$ so $H\subset 2^{l_n}$ and \[\mu((\dbra{H}-\dbra{H'})\cap Q)\leq|H|\cdot 2^{-l_{n+1}}\leq 2^{l_n-l_{n+1}}.\] To prove that $H$ hits $\TT(Q,F)\cap\Phi^{-1}(\sigma)$, suppose otherwise there is $T\in\TT(Q,F)\cap\Phi^{-1}(\sigma)$ such that $T\cap H=\emptyset$. Then
	\begin{align*}
		\tau\in T\text{ and }|\tau|=l_n
		&\implies \tau\notin H\text{ and }|\tau|=l_n \\
		&\implies \mu((\dbra{\tau}-\dbra{H'})\cap Q)>2^{-l_{n+1}} \\
		&\implies \text{ there is }x_1,x_2\in(\dbra{\tau}-\dbra{H'})\cap Q\text{ with }x_1\upharpoonright_{l_{n+1}}\neq x_2\upharpoonright_{l_{n+1}}.
	\end{align*}

	Take $\sigma_1=x_1\upharpoonright_{l_{n+1}}$ and $\sigma_2=x_2\upharpoonright_{l_{n+1}}$, then $\sigma_1\neq\sigma_2$ and we shall claim some properties on them. Firstly, since $\dbra{\sigma_1}\cap\dbra{\tau}\neq\emptyset$ and $|\tau|=l_n<l_{n+1}=|\sigma_1|$, we have $\tau\prec\sigma_1$. Also, as $\dbra{\sigma_1}-\dbra{H'}\neq\emptyset$, with the fact that $|\sigma_1|=l_{n+1}$ and $H'\subset 2^{l_{n+1}}$ we have $\sigma_1\notin H'$, so $\dbra{\sigma_1}\cap\dbra{H'}=\emptyset$. Finally, together with $\dbra{\sigma_1}\cap Q\neq\emptyset$ we have $\dbra{\sigma_1}\cap(Q-\dbra{H'})\neq\emptyset$. The same claims hold for $\sigma_2$.

	We then build an $(l_i)$-branching skeletal tree $T'$. We start from $T'=T\upharpoonright_{l_n}$ and let $z$ be the main branch of $T$. For each $\tau\in T$ and $|\tau|=l_n$, we take $\sigma_1$ and $\sigma_2$ extending $\tau$ as in the previous argument. We first use Lemma \ref{af272f1b} to extend $\sigma_1$ to a real in $Q-\dbra{H'}$, and put all initial segments of the real in $T'$. Additionally, if $\tau=z\upharpoonright_{l_n}$, we also use Lemma \ref{af272f1b} to extend $\sigma_2$ to an $(l_i)_{i\geq n+1}$-branching skeletal tree whose paths are in $Q-\dbra{H'}$. We add the strings in this skeletal tree into $T'$.
	
	We can verify that $T'$ is indeed $(l_i)$-branching. Additionally $T'\succ T\upharpoonright_{l_n}\succ F$ and $[T']\subset Q$, so $T'\in\TT(Q,F)$. Since $\dbra{T\upharpoonright_{l_n}}\subset\Phi^{-1}(\sigma)$ we also have $T'\in\Phi^{-1}(\sigma)$. But $\dbra{H'}\cap[T']=\emptyset$, contradicting the fact that $H'$ hits $\TT(Q,F)\cap\Phi^{-1}(\sigma)$.
\end{proof}

\subsection{Envelopes}
In this subsection, we fix a finite tree $F$ with height $h$, a function $f$ and a tree functional $\Phi$ such that $\Phi$ is total on all $T\in\TT(Q,F)$ within \ou $f$.

This subsection is devoted to establish the fact that if the hitting cost of $\TT(Q,F)\cap\Phi^{-1}(\sigma)$ is small, then $c_Q(\TT(Q,F))=0$. We split this task into 3 steps. Firstly, if the hitting cost of $\TT(Q,F)\cap\Phi^{-1}(\sigma)$ is small, then a well-behaved finite family (which we shall call a finite envelope) hits them. Then we use compactness argument to extend this family to an infinite one. Finally we use this infinite family to generate sets that hit $\TT(Q,F)$, showing that $c_Q(\TT(Q,F))=0$.

We let $l_\sigma$ be the first $l_n$ in $(l_i)$ such that $l_n\geq h$, $l_n\geq f(|\sigma|)$ and that $l_n>l_\tau$ where $\tau$ is any proper prefix of $\sigma$. We note that $l_\sigma$ is only determined by $|\sigma|$, not $\sigma$ itself. Also the last requirement implies that $n\geq|\sigma|$.

\begin{defi}
	An \textit{$r$-envelope} is a family $(H_\sigma)$ indexed by $\sigma\in 2^{<\omega}$ such that for all $\sigma$,
	\begin{enumerate}
		\item $\mu(\dbra{H_\sigma}\cap Q)\leq 2^{r-|\sigma|}+\sum_{l_\sigma\leq l_i}2^{l_i-l_{i+1}}$,
		\item $H_\sigma$ hits $\TT(Q,F)\cap\Phi^{-1}(\sigma)$,
		\item $H_\sigma\subset 2^{l_\sigma}$,
		\item $\mu((\dbra{H_\sigma}-\dbra{H_{\sigma 0}}-\dbra{H_{\sigma 1}})\cap Q)\leq\sum_{l_\sigma\leq l_i<l_{\sigma 0}}2^{l_i-l_{i+1}}$.
	\end{enumerate}
	A \textit{finite $r$-envelope} is defined similarly, except that it is indexed by $\sigma\in 2^{\leq n}$ for some $n$, which is referred to as the \textit{length} of the envelope.
\end{defi}

\begin{lem}
	\label{96cab15f}
	If there is a constant $r$ such that $c_Q(\TT(Q,F)\cap\Phi^{-1}(\sigma))\leq 2^{r-|\sigma|}$ for all $\sigma$, then there are finite $r$-envelopes of any length $n$.
\end{lem}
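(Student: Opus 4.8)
The goal is to construct, for each length $n$, a finite $r$-envelope $(H_\sigma)_{\sigma\in 2^{\le n}}$. The natural approach is a downward recursion on the length of $\sigma$, building $H_\sigma$ for $|\sigma|=n$ first and then descending to shorter strings, so that each $H_\sigma$ can be defined in terms of $H_{\sigma 0}$ and $H_{\sigma 1}$ (which have already been constructed). For the base case $|\sigma|=n$, the hypothesis $c_Q(\TT(Q,F)\cap\Phi^{-1}(\sigma))\le 2^{r-|\sigma|}$ gives a set hitting $\TT(Q,F)\cap\Phi^{-1}(\sigma)$ with $Q$-measure at most $2^{r-n}+\varepsilon$ for any $\varepsilon>0$; I would push this hitting set out to level $l_\sigma$ (only increasing its $\dbra{\cdot}$-measure by a vanishing amount, or leaving it unchanged if we round measures suitably) so that conditions (2) and (3) hold, and absorb the slack into the tail sum $\sum_{l_\sigma\le l_i}2^{l_i-l_{i+1}}$ appearing in (1). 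Since at the bottom level we do not need condition (4) to relate $H_\sigma$ to anything below, the base case is comparatively easy.

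For the recursive step, suppose $|\sigma|<n$ and $H_{\sigma 0},H_{\sigma 1}$ are already built, living at levels $l_{\sigma 0}=l_{\sigma 1}$. The idea is to start from a cheap hitting set for $\TT(Q,F)\cap\Phi^{-1}(\sigma)$ guaranteed by $c_Q(\cdots)\le 2^{r-|\sigma|}$, take its union with $H_{\sigma 0}\cup H_{\sigma 1}$ (which certainly hits $\TT(Q,F)\cap\Phi^{-1}(\sigma)$ as well, since any $T$ with $\sigma\in\Phi(T)$ also has $\sigma 0$ or $\sigma 1$ in $\Phi(T)$ — here one uses that $\Phi(T)$ is perfect, so $\sigma$ branches), and then apply Lemma \ref{1604aefc} repeatedly to pull the whole thing down level by level from $l_{\sigma 0}$ to $l_\sigma$. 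Each application of Lemma \ref{1604aefc} at level $l_i$ costs at most $2^{l_i-l_{i+1}}$ in $(\dbra{H}-\dbra{H'})\cap Q$-measure, and these costs telescope: summing over $l_\sigma\le l_i<l_{\sigma 0}$ gives precisely the bound in condition (4), while the portion of $H_\sigma$ that survives inside $\dbra{H_{\sigma 0}}\cup\dbra{H_{\sigma 1}}$ contributes at most $\mu((\dbra{H_{\sigma 0}}\cup\dbra{H_{\sigma 1}})\cap Q)\le \mu(\dbra{H_{\sigma 0}}\cap Q)+\mu(\dbra{H_{\sigma 1}}\cap Q)$, each term of which is controlled by the inductively-assumed instance of (1) for strings of length $|\sigma|+1$. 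Adding these up and checking the geometric bookkeeping — $2\cdot 2^{r-(|\sigma|+1)}=2^{r-|\sigma|}$ for the main terms, and the tail sums combining correctly — should yield condition (1) for $\sigma$, with conditions (2), (3) immediate from the construction (Lemma \ref{1604aefc} preserves the hitting property and delivers a set at the right level) and (4) from the telescoped cost.

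The step I expect to be the main obstacle is the measure bookkeeping in the recursive step: one must verify that the slack absorbed going down from $l_{\sigma 0}$ to $l_\sigma$, plus the inductive bounds on $\mu(\dbra{H_{\sigma i}}\cap Q)$, recombine to give exactly the form $2^{r-|\sigma|}+\sum_{l_\sigma\le l_i}2^{l_i-l_{i+1}}$ required by (1) — not a larger constant. The key point making this work is that the exponents $2^{r-|\sigma|}$ double as we ascend one level, exactly matching the factor of $2$ from having two children $H_{\sigma 0},H_{\sigma 1}$, while the tail sums $\sum_{l_\sigma\le l_i}2^{l_i-l_{i+1}}$ are nested so that the child's tail sum $\sum_{l_{\sigma 0}\le l_i}2^{l_i-l_{i+1}}$ plus the newly incurred cost $\sum_{l_\sigma\le l_i<l_{\sigma 0}}2^{l_i-l_{i+1}}$ is again $\sum_{l_\sigma\le l_i}2^{l_i-l_{i+1}}$. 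A secondary point to handle with care is the requirement in Lemma \ref{1604aefc} that $l_n\ge h$ and $l_n\ge f(|\sigma|)$, which is exactly what the definition of $l_\sigma$ ensures, and the fact that $l_\sigma$ depends only on $|\sigma|$, so that $H_{\sigma 0}$ and $H_{\sigma 1}$ do indeed sit at a common level above $l_\sigma$; I would state these compatibility checks explicitly before running the recursion.
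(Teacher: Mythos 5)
Your plan is essentially the paper's proof: treat $|\sigma|=n$ using the hitting-cost hypothesis, then recurse downward, at each internal node using $\dbra{H_{\sigma 0}}\cup\dbra{H_{\sigma 1}}$ (which hits $\TT(Q,F)\cap\Phi^{-1}(\sigma)$ because $\Phi(T)$ branches at $\sigma$) and iterating Lemma \ref{1604aefc} to pull down to level $l_\sigma$, with condition~4 recording the pulldown cost and condition~1 following from it together with the children's bounds. Two concrete points need repair, though.

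First, in your recursive step you take the union of the children's sets with a \emph{fresh} hitting set of $Q$-measure about $2^{r-|\sigma|}$. Taken literally this breaks condition~1: the two children already contribute up to $2\cdot 2^{r-|\sigma|-1}=2^{r-|\sigma|}$ (plus tail terms), so adding a further $2^{r-|\sigma|}$ at every internal level inflates the main term by an amount that grows with the depth $n-|\sigma|$, whereas the envelope must satisfy condition~1 with the \emph{same} constant $r$ for every length $n$ (this uniformity is exactly what the compactness argument of Lemma \ref{7c902466} needs). Your own bookkeeping, ``$2\cdot 2^{r-(|\sigma|+1)}=2^{r-|\sigma|}$ for the main terms,'' silently omits the fresh set; the fix is simply to drop it. The paper invokes fresh hitting sets only at the top level $|\sigma|=n$; at internal nodes the union of the two children is the entire input to Lemma \ref{1604aefc}.

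Second, at the top level you propose to ``push the hitting set out to level $l_\sigma$'' at vanishing cost. A hitting set witnessing $c_Q(\TT(Q,F)\cap\Phi^{-1}(\sigma))\le 2^{r-n}$ may be infinite and may contain strings longer than $l_\sigma$, so it cannot be relocated to level $l_\sigma$ for free. The paper first replaces it by a finite hitting set (Lemma \ref{13958859}, which requires knowing that $\TT(Q,F)\cap\Phi^{-1}(\sigma)$ is closed, i.e.\ Lemma \ref{a42e15c4}, using totality of $\Phi$ within oracle-use $f$), then pushes it out to a level $L$ of $(l_i)$ beyond all of its strings, and only then pulls it \emph{down} to $l_\sigma$ by repeated application of Lemma \ref{1604aefc}; the cost of this pulldown, $\sum_{l_\sigma\le l_i<L}2^{l_i-l_{i+1}}$, is precisely where the tail term in condition~1 comes from. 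With these two repairs your argument coincides with the paper's.
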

\begin{proof}
	For each $|\sigma|=n$, there is $H_\sigma'$ with $\mu(\dbra{H_\sigma'}\cap Q)\leq 2^{r-|\sigma|}$ that hits $\TT(Q,F)\cap\Phi^{-1}(\sigma)$. By Lemma \ref{a42e15c4}, $\TT(Q,F)\cap\Phi^{-1}(\sigma)$ is closed, so by Lemma \ref{13958859} we can choose this $H_\sigma'$ to be finite. Let $L$ be in $(l_i)$ and longer than any string in this finite $H_\sigma'$. Then $\{\tau\in 2^L:\dbra{\tau}\subset\dbra{H'_\sigma}\}$ has the same measure as $H_\sigma'$ and also hits $\TT(Q,F)\cap\Phi^{-1}(\sigma)$. By repeatedly applying Lemma \ref{1604aefc}, there is $H_\sigma\subset 2^{l_\sigma}$ with \[\mu(\dbra{H_\sigma}\cap Q)\leq\mu(\dbra{H_\sigma'}\cap Q)+\sum_{l_\sigma\leq l_i<L}2^{l_i-l_{i+1}}\] that also hits $\TT(Q,F)\cap\Phi^{-1}(\sigma)$. Thus the family $H_\sigma$ satisfies conditions 1, 2 and 3.

	We extend these $H_\sigma$ to a finite envelope of length $n$. Inductively for all $|\sigma|<n$ (from $|\sigma|=n-1$ to $|\sigma|=0$), let $H$ be such that $\dbra{H}=\dbra{H_{\sigma 0}}\cup\dbra{H_{\sigma 1}}$.

	We claim that $H$ hits $\TT(Q,F)\cap\Phi^{-1}(\sigma)$. To see this, observe that if $T\in\TT(Q,F)\cap\Phi^{-1}(\sigma)$, then since $\sigma\in\Phi(T)$ and $\Phi$ is total on $T$, at least one of $\sigma 0$ or $\sigma 1$ is in $\Phi(T)$. So \[\TT(Q,F)\cap\Phi^{-1}(\sigma)\subset(\TT(Q,F)\cap\Phi^{-1}(\sigma 0))\cup(\TT(Q,F)\cap\Phi^{-1}(\sigma 1)).\] Therefore by the fact that $H_{\sigma 0}$ hits $\TT(Q,F)\cap\Phi^{-1}(\sigma 0)$ and $H_{\sigma 1}$ hits $\TT(Q,F)\cap\Phi^{-1}(\sigma 1)$, we have that $H_{\sigma 0}\cup H_{\sigma 1}$ hits $\TT(Q,F)\cap\Phi^{-1}(\sigma)$, and so does $H$.

	Again by repeatedly applying Lemma \ref{1604aefc}, there is $H_\sigma\subset 2^{l_\sigma}$ with \[\mu((\dbra{H_\sigma}-\dbra{H})\cap Q)\leq\sum_{l_\sigma\leq l_i<l_{\sigma 0}}2^{l_i-l_{i+1}}\] that also hits $\TT(Q,F)\cap\Phi^{-1}(\sigma)$. Thus conditions 2, 3 and 4 are satisfied. Condition 1 is satisfied for $|\sigma|=n$, and for $|\sigma|<n$ it is automatically implied by condition 4.
\end{proof}

\begin{lem}
	\label{7c902466}
	If there are finite $r$-envelopes for any length $n$, then there is an infinite $r$-envelope.
\end{lem}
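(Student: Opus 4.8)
The plan is a König's-lemma compactness argument, exploiting the fact noted above that $l_\sigma$ depends only on $|\sigma|$, so that each $H_\sigma$ ranges over the \emph{finite} collection of all subsets of $2^{l_\sigma}$.

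First I would record that the four defining conditions of an $r$-envelope are \emph{local}, in the following sense: for a fixed node $\sigma$, conditions 1--3 depend only on the single coordinate $H_\sigma$, and condition 4 depends only on the triple $H_\sigma,H_{\sigma 0},H_{\sigma 1}$. This is because $Q$ and $F$ are fixed, $\Phi$ is fixed, and each $H_\tau$ occurring here is a finite set of strings, so ``$\mu(\dbra{H_\tau}\cap Q)\leq\cdots$'' and ``$H_\tau$ hits $\TT(Q,F)\cap\Phi^{-1}(\tau)$'' are honest properties of the finitely many coordinates involved. Second I would observe that the restriction of a finite $r$-envelope of length $n+1$ to the index set $2^{\leq n}$ is a finite $r$-envelope of length $n$: conditions 2 and 3 are inherited verbatim, condition 1 is inherited for every $\sigma$ with $|\sigma|\leq n$, and condition 4 is only demanded for $|\sigma|<n$, which lies inside the range where it is already known to hold.

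With these two observations I would build the tree $\SSr$ whose nodes at level $n$ are the finite $r$-envelopes of length $n$, ordered by restriction. By hypothesis $\SSr$ has a node at every level, and it is finitely branching since for each $n$ there are only finitely many families $(H_\sigma)_{\sigma\in 2^{\leq n}}$ with $H_\sigma\subseteq 2^{l_\sigma}$. By König's lemma $\SSr$ has an infinite branch, yielding a coherent sequence of finite $r$-envelopes $E_n=(H^{(n)}_\sigma)_{\sigma\in 2^{\leq n}}$ with $E_{n+1}$ restricting to $E_n$. Defining $H_\sigma:=H^{(n)}_\sigma$ for any $n>|\sigma|$ (well-defined by coherence) produces a family $(H_\sigma)$ indexed by $2^{<\omega}$.

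Finally I would verify that $(H_\sigma)$ is an infinite $r$-envelope: given any $\sigma$, pick $n>|\sigma|+1$; then $E_n$ satisfies conditions 1--4 at $\sigma$ (condition 4 applying because $|\sigma|<n$), and since these conditions refer only to the coordinates $H^{(n)}_\sigma=H_\sigma$, $H^{(n)}_{\sigma 0}=H_{\sigma 0}$, $H^{(n)}_{\sigma 1}=H_{\sigma 1}$, the same conditions hold for $(H_\sigma)$. I do not expect a genuine obstacle here; the only points needing care are the bookkeeping that makes the four conditions literally local and the verification that the restriction of a finite envelope is again a finite envelope, both of which are routine.
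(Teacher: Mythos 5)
Your proposal is correct and is essentially the paper's argument: the paper also chooses, level by level, a finite envelope that is the restriction of infinitely many of the given ones, which is precisely the pigeonhole step inside your K\"onig's-lemma tree of finite envelopes (both using that $H_\sigma\subseteq 2^{l_\sigma}$ with $l_\sigma$ depending only on $|\sigma|$ makes each level finite, and that restrictions of finite $r$-envelopes are finite $r$-envelopes). Your write-up just makes the restriction-closure and locality checks explicit, which the paper leaves implicit.
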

\begin{proof}
	By the assumption, for each $n$ let $\Hs_n=(H_{n,\sigma})$ be a finite $r$-envelope of length $n$. We define another sequence of finite $r$-envelopes $\Hs'_n$ of length $n$ such that $\Hs'_{n-1}$ is the restriction of $\Hs'_n$ to $\sigma\in 2^{\leq n-1}$. Then $\lim\Hs'_n$ is an infinite $r$-envelope.
	
	To choose this sequence, we start from $\Hs'_0$ being the empty envelope of length $0$. In each stage $n$, find $\Hs'_n$ extending $\Hs'_{n-1}$ such that $\Hs'_n$ is the restriction of $\Hs_m$ for infinitely many $m$. Such an extension can always be found since there are only finitely many choices for a finite envelope of a certain length, but infinitely many restrictions of $\Hs_m$.
\end{proof}

Before proceeding, we simplify the term in condition 1 of an envelope. If $l_\sigma=l_n$ then \[\sum_{l_\sigma\leq l_i}2^{l_i-l_{i+1}}=\sum_{i=n}^\infty 2^{-l_{i+1}}=\sum_{i=n}^\infty 2^{-2i-c-1}=\frac{2^{-2n-c+1}}{3}\leq 2^{-|\sigma|-c}\leq 2^{r-|\sigma|},\] therefore condition 1 implies that $\mu(\dbra{H_\sigma}\cap Q)\leq 2^{1+r-|\sigma|}$.

\begin{lem}
	\label{2d7bbb1e}
	If there is an infinite $r$-envelope $(H_\sigma)$ then $c_Q(\TT(Q,F))=0$.
\end{lem}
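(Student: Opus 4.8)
The plan is to use the infinite $r$-envelope $(H_\sigma)$ to manufacture, for each $k$, a set $H^{(k)}$ with $\mu(\dbra{H^{(k)}}\cap Q)$ as small as we like that hits $\TT(Q,F)$, thereby forcing $c_Q(\TT(Q,F))=0$. The natural candidate is a ``frontier'' obtained by telescoping the envelope down a full binary tree of labels: starting from $H_\estring$, replace it by $H_0\cup H_1$, then replace each $H_i$ by $H_{i0}\cup H_{i1}$, and so on, and after $k$ rounds take $H^{(k)}=\bigcup_{|\sigma|=k}H_\sigma$. The point of condition 2 of an envelope is that this set hits $\TT(Q,F)$: if $T\in\TT(Q,F)$ then, since $\Phi$ is total on $T$ and $\estring\in\Phi(T)$, an easy induction (the same branching argument used in Lemma \ref{96cab15f}) shows that for every $k$ there is a string $\sigma$ with $|\sigma|=k$ and $\sigma\in\Phi(T)$; then $H_\sigma$ hits $T$, so $H^{(k)}$ does too.

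The work is in bounding $\mu(\dbra{H^{(k)}}\cap Q)$. Here condition 4 is the engine: it says that passing from $H_\sigma$ to $H_{\sigma0}\cup H_{\sigma1}$ loses only $\sum_{l_\sigma\le l_i<l_{\sigma0}}2^{l_i-l_{i+1}}$ in $Q$-measure, i.e. $\mu(\dbra{H_\sigma}\cap Q)\le \mu((\dbra{H_{\sigma0}}\cup\dbra{H_{\sigma1}})\cap Q)+\varepsilon_\sigma$ with $\varepsilon_\sigma=\sum_{l_\sigma\le l_i<l_{\sigma0}}2^{l_i-l_{i+1}}$. Summing over the first $k$ levels of the label-tree, $\mu(\dbra{H_\estring}\cap Q)\le \mu(\dbra{H^{(k)}}\cap Q)+\sum_{j<k}\sum_{|\sigma|=j}\varepsilon_\sigma$, hence $\mu(\dbra{H^{(k)}}\cap Q)\ge \mu(\dbra{H_\estring}\cap Q)-\sum_{j<k}\sum_{|\sigma|=j}\varepsilon_\sigma$ — but that is the wrong direction. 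The correct reading is that we want an \emph{upper} bound on $\mu(\dbra{H^{(k)}}\cap Q)$, and condition 4 instead gives us that the ``shrinkage at level $j$'' telescopes: iterating condition 4 downward we get
\[
\mu(\dbra{H^{(k)}}\cap Q)\ \le\ \mu(\dbra{H_\estring}\cap Q)\ +\ \sum_{j=0}^{k-1}\ \sum_{|\sigma|=j}\ \varepsilon_\sigma,
\]
and the real content is that the double sum is bounded \emph{independently of $k$}. For a string $\sigma$ with $l_\sigma=l_n$, the inner index range $l_\sigma\le l_i<l_{\sigma0}$ sits between level-$j$ and level-$(j+1)$ thresholds, and since the $l_\sigma$'s are strictly increasing in $|\sigma|$ and $l_{\sigma0}$ depends only on $|\sigma|+1$, the intervals $[l_\sigma,l_{\sigma0})$ attached to distinct $\sigma$ of the same length are disjoint; moreover $\varepsilon_\sigma=\sum_{l_\sigma\le l_i<l_{\sigma0}}2^{l_i-l_{i+1}}=\sum 2^{-(2i+c+1)}$ which, summed over the $2^j$ strings of length $j$ (there are only $2^j$ of them but the index $i$ runs from roughly $j$ upward), is dominated by $2^j\cdot 2^{-(2l_{\sigma}\text{-ish})}$; using $l_n=\sum_{j\le n}(2j+c)\ge n^2$ this is summable in $j$. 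So $\sum_{j,\sigma}\varepsilon_\sigma\le S$ for an absolute constant $S$, and combined with the simplification noted just before the lemma ($\mu(\dbra{H_\estring}\cap Q)\le 2^{1+r}$) we get $\mu(\dbra{H^{(k)}}\cap Q)\le 2^{1+r}+S$ for all $k$ — still only a constant bound, not $0$.

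To get $c_Q(\TT(Q,F))=0$ from a merely \emph{constant} bound, I would apply the telescoping not from the root but from level $m$: for each $m$ and each $\sigma$ with $|\sigma|=m$, the $\sigma$-subtree of the envelope is itself (a shifted version of) an $r'$-envelope for the class $\TT(Q,F)\cap\Phi^{-1}(\sigma)$ with $r'=r-m$, and condition 1 gives $\mu(\dbra{H_\sigma}\cap Q)\le 2^{1+r-m}$. Telescoping the labels \emph{within} that subtree down to level $k>m$ produces, for each $\sigma$ of length $m$, a set $H^{(k)}_\sigma\subset\bigcup_{|\rho|=k}H_\rho$ hitting $\TT(Q,F)\cap\Phi^{-1}(\sigma)$ with $\mu(\dbra{H^{(k)}_\sigma}\cap Q)\le \mu(\dbra{H_\sigma}\cap Q)+\sum_{j=m}^{k-1}\sum_{\sigma\prec\rho,|\rho|=j}\varepsilon_\rho\le 2^{1+r-m}+S_m$ where $S_m=\sum_{j\ge m}\sum_{|\rho|=j}\varepsilon_\rho\to 0$ as $m\to\infty$ by the summability above. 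Taking the union over all $2^m$ strings $\sigma$ of length $m$ gives a single set $H^{(k)}$ (independent of the refinement level, in the limit $k\to\infty$ use compactness / Lemma \ref{13958859} to pass to a finite hitting set, then to a clopen superset) that hits $\TT(Q,F)$ with $\mu(\dbra{H^{(k)}}\cap Q)\le 2^m\cdot(2^{1+r-m}+S_m)=2^{1+r}+2^m S_m$. The main obstacle is precisely to arrange the bookkeeping so that $2^m S_m\to 0$: this needs $S_m$ to decay faster than $2^{-m}$, which is where the quadratic growth $l_n\asymp n^2$ (equivalently $\sum_n 2^{-(l_{n+1}-l_n)}=\sum_n 2^{-(2n+c+1)}<\infty$, the hypothesis of Lemma \ref{af272f1b}) is used in an essential, quantitative way — a linear $\ell$ would not suffice. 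Granting that, $c_Q(\TT(Q,F))\le 2^{1+r}+2^mS_m$ for every $m$, hence $c_Q(\TT(Q,F))=0$ as claimed.
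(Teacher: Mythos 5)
Your proposal has a genuine gap, and it is visible in your own final inequality. The bound you end with, $c_Q(\TT(Q,F))\le 2^{1+r}+2^mS_m$, never drops below the fixed constant $2^{1+r}$, so even granting all the bookkeeping ($2^mS_m\to 0$) it does not yield $c_Q(\TT(Q,F))=0$. This is not a fixable accounting issue but a structural one: any strategy that takes as its hitting set the union of \emph{all} $H_\sigma$ over a level (or over the level-$k$ refinements of all $2^m$ strings of length $m$) is doomed, because condition 1 only bounds the total level-$n$ mass $\sum_{\sigma\in 2^n}\mu(\dbra{H_\sigma}\cap Q)$ by $2^{1+r}$, and nothing in the definition of an envelope forces the sets $H_\sigma$, $\sigma\in 2^n$, to overlap; they may well be pairwise disjoint, in which case your $H^{(k)}$ has $Q$-measure about $2^{1+r}$ for every $k$. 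Condition 4 controls only the loss $\dbra{H_\sigma}\setminus(\dbra{H_{\sigma0}}\cup\dbra{H_{\sigma1}})$, as you correctly observed, so it can never be leveraged into a vanishing upper bound for a full-level union.

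The missing idea is to hit $\TT(Q,F)$ with a much smaller set by exploiting the mismatch between the few paths of a skeletal $T\in\TT(Q,F)$ and the many nodes of the perfect tree $\Phi(T)$. The paper's proof counts, for each real $z$, the number $\hit(z,n)$ of $\sigma\in 2^n$ with $z\in\dbra{H_\sigma}$. Condition 4 shows the set $V_m$ of $z\in Q$ where this count ever decreases after level $m$ has $Q$-measure $\le 2^{1-m-c}$, and the mass bound from condition 1 shows the set $U_{k,m}$ of $z$ hit at least $2^k$ times at some level $\ge m$ has $Q$-measure $\le 2^{1+r-k}$ (each such $z$ is charged $2^k$ times into a sum of size $\le 2^{1+r}$). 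The crucial point is then that $V_m\cup U_{k,m}$ already hits $\TT(Q,F)$: if $T$ missed it, every path of $T$ would be hit only boundedly often, and since $T$ is skeletal (one main branch plus finitely many paths off any prefix of it) while $\Phi(T)$ is perfect, one can choose a nested sequence $\sigma_0\prec\sigma_1\prec\cdots\prec\sigma_n$ in $\Phi(T)$ whose final $H_{\sigma_n}$ avoids all paths of $T$, contradicting condition 2, which requires $H_{\sigma_n}$ to hit $\TT(Q,F)\cap\Phi^{-1}(\sigma_n)\ni T$. Since $m$ and $k$ are arbitrary, $c_Q(\TT(Q,F))\le 2^{1-m-c}+2^{1+r-k}\to 0$. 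Your proposal never uses the skeletal structure of the trees in $\TT(Q,F)$ nor the perfectness of $\Phi(T)$ beyond existence of one node per level, and without some such counting argument the constant barrier $2^{1+r}$ cannot be broken.
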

\begin{proof}
	For any real $z$, let $\hit(z,n)=|\{\sigma\in 2^n:z\in\dbra{H_\sigma}\}|$. 
	
	As $\dbra{H_\sigma}$ could contain more than $\dbra{H_{\sigma 0}}\cup\dbra{H_{\sigma 1}}$, $\hit(z,n)$ is generally not non-decreasing in $n$. However, by condition 4 of the envelope, the measure of all $z\in Q$ such that $\hit(z,n)$ is decreasing at $n=|\sigma|$ is no more than $2^n\sum_{l_\sigma\leq l_i<l_{\sigma 0}}2^{l_i-l_{i+1}}$. Fix any $m$. Let $V_m$ be the set of all $z$ such that $\hit(z,n)$ is decreasing at any $n\geq m$, then \[\mu(V_m\cap Q)\leq\sum_{n\geq m}2^n\sum_{l_\sigma\leq l_i<l_{\sigma 0}}2^{l_i-l_{i+1}}\leq \sum_{i\geq m}2^{-2i-c-1}\sum_{n\leq i}2^n\leq 2^{1-m-c}.\]
	
	For $z\notin V_m$, $\hit(z,n)$ is non-decreasing on $n\geq m$, so we can set $\hit(z)=\lim_n\hit(z,n)$. Let \[U^n_k=\{z:\hit(z,n)\geq 2^k\}\text{ and }U_{k,m}=\bigcup_{n\geq m}U^n_k.\] For a particular $n$, consider the sum \[\sum_{\sigma\in 2^n}\mu(\dbra{H_\sigma}\cap Q)\leq 2^{1+r}\] and note that each set of reals contributing its measure to $U^n_k\cap Q$ also contributes $2^k$ times its measure to the above sum, so $\mu(U^n_k\cap Q)\leq 2^{1+r-k}$. Note that $U^n_k-V_m$ is non-decreasing for $n\geq m$, so \[\mu((U_{k,m}-V_m)\cap Q)\leq\lim_n\mu(U^n_k\cap Q)\leq 2^{1+r-k}.\]

	Note that $V_m$ and $U_{k,m}$ are open. Let $H$ be finite, prefix-free and such that $\dbra{H}=V_m\cup U_{k,m}$, we claim that $H$ hits $\TT(Q,F)$, so \[c_Q(\TT(Q,F))\leq\mu((V_m\cup U_{k,m})\cap Q)\leq 2^{1-m-c}+2^{1+r-k},\] and since $m$ and $k$ are arbitrary we have $c_Q(\TT(Q,F))=0$. To prove the claim, suppose otherwise there is $T\in\TT(Q,F)$ such that $[T]\cap V_m=\emptyset$ and $[T]\cap U_{k,m}=\emptyset$. Then for any $z\in[T]$, $\hit(z)$ is defined, so $\hit(z,n)$ is bounded.

	$T$ is a skeletal tree, let $z_0$ be its main branch. $\Phi(T)$ is perfect and $\hit(z_0,n)$ is bounded, so there is $\sigma_0\in\Phi(T)$ with $z\notin\dbra{H_{\sigma_0}}$. Since $H_{\sigma_0}$ is finite, there is $\tau\prec z$ such that $\dbra{\tau}\cap\dbra{H_{\sigma_0}}=\emptyset$.

	There are only finitely many paths $z_1,\cdots,z_n$ in $[T]$ not prefixed by $\tau$. Inductively in $k$ (from $1$ to $n$), since $\hit(z_k,n)$ is bounded, there is $\sigma_k\in\Phi(T)$ with $\sigma_k\succ\sigma_{k-1}$ and $z_k\notin\dbra{H_{\sigma_k}}$. Let $\sigma=\sigma_n$. Now $\dbra{H_{\sigma_0}}\supset\dbra{H_{\sigma_1}}\supset\cdots\supset\dbra{H_{\sigma_n}}=\dbra{H_{\sigma}}$.

	For a path in $[T]$ prefixed by $\tau$, since $\dbra{\tau}\cap\dbra{H_{\sigma_0}}=\emptyset$ it is not in $\dbra{H_\sigma}$. For a path $z_k$ in $[T]$ not prefixed by $\tau$, since $z_k\notin\dbra{H_{\sigma_k}}$, it is not in $\dbra{H_\sigma}$. So $[T]\cap \dbra{H_\sigma}=\emptyset$. But $H_\sigma$ hits $\TT(Q,F)\cap\Phi^{-1}(\sigma)$, and indeed $\sigma\in\Phi(T)$ so $T\in\Phi^{-1}(\sigma)$, a contradiction.
\end{proof}

\subsection{Proof of Theorem \ref{4ab6fec4}}

Combining Lemma \ref{96cab15f}, Lemma \ref{7c902466}, Lemma \ref{2d7bbb1e} and Lemma \ref{0240295d}, we get the following.

\begin{lem}
	\label{e46c53e5}
	Let $F$ be a finite tree, $f$ a function and $\Phi$ a tree functional such that $\Phi$ is total on all $T\in\TT(Q,F)$ within \ou $f$. If there is a constant $r$ such that $c_Q(\TT(Q,F)\cap\Phi^{-1}(\sigma))\leq 2^{r-|\sigma|}$ for all $\sigma$, then $\TT(Q,F)=\emptyset$.
\end{lem}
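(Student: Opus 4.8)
The plan is to derive the lemma by composing, in order, the three implications established in the Envelopes subsection with Lemma~\ref{0240295d}. Fix $F$, $f$, $\Phi$ and the constant $r$ as in the hypothesis, so that $\Phi$ is total on all $T\in\TT(Q,F)$ within \ou $f$ and $c_Q(\TT(Q,F)\cap\Phi^{-1}(\sigma))\leq 2^{r-|\sigma|}$ for every $\sigma$.

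The chain runs as follows. The inequality on hitting costs is exactly the hypothesis of Lemma~\ref{96cab15f}, which therefore yields finite $r$-envelopes of every length $n$. That conclusion is in turn exactly the hypothesis of Lemma~\ref{7c902466}, which produces an infinite $r$-envelope $(H_\sigma)$. Feeding this infinite envelope into Lemma~\ref{2d7bbb1e} gives $c_Q(\TT(Q,F))=0$. Finally, recall that, $Q$ being a \pz class of reals, $\TT(Q,F)$ is a \pz class of trees with $\TT(Q,F)\subseteq\TT(Q)$; so Lemma~\ref{0240295d} applies with $\GG=\TT(Q,F)$ and forces $\TT(Q,F)=\emptyset$.

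There is no genuine obstacle here — the lemma is a packaging of work already done — but a couple of bookkeeping points are worth stating. First, the same constant $r$ must be used in all three of Lemmas~\ref{96cab15f}, \ref{7c902466}, \ref{2d7bbb1e}; this is automatic since $r$ is fixed from the outset. Second, the standing assumptions of the Envelopes subsection (that $F$ is a finite tree of some height $h$, that $f$ is a function, and that $\Phi$ is total on $\TT(Q,F)$ within \ou $f$) are precisely the hypotheses under which Lemmas~\ref{96cab15f}--\ref{2d7bbb1e} were proved, so nothing extra needs checking. The net effect is that smallness of the hitting costs of the preimages $\TT(Q,F)\cap\Phi^{-1}(\sigma)$, at the geometric rate $2^{r-|\sigma|}$, propagates down to $c_Q(\TT(Q,F))=0$ and hence to emptiness of $\TT(Q,F)$.
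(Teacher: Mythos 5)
Your proposal is correct and is exactly the paper's argument: the paper proves Lemma~\ref{e46c53e5} precisely by chaining Lemma~\ref{96cab15f} (finite $r$-envelopes), Lemma~\ref{7c902466} (an infinite $r$-envelope), Lemma~\ref{2d7bbb1e} ($c_Q(\TT(Q,F))=0$) and Lemma~\ref{0240295d} (emptiness, using that $\TT(Q,F)\subseteq\TT(Q)$ is a $\Pi^0_1$ class). Your bookkeeping remarks about the fixed constant $r$ and the standing hypotheses of the Envelopes subsection are accurate and need no further justification.
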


To prove Theorem \ref{4ab6fec4} we need a final Lemma.

\begin{lem}
	\label{59679e46}
	Let $F$ be a finite tree, $f$ a function and $\Phi$ a tree functional such that $\Phi$ is total on all $T\in\TT(Q,F)$ within \ou $f$. If there is no constant $r$ such that $c_Q(\TT(Q,F)\cap\Phi^{-1}(\sigma))\leq 2^{r-|\sigma|}$ for each $\sigma$, then for each $m$, there is some $T\in\TT(Q,F)$ such that $\Phi(T)$ has deficiency at least $m$.
\end{lem}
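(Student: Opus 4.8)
The plan is to argue by contraposition. Suppose that for some fixed $m$ every $T\in\TT(Q,F)$ has $\Phi(T)$ of deficiency $<m$. Writing $G_\sigma:=\TT(Q,F)\cap\Phi^{-1}(\sigma)$ and $R:=\setbra{\sigma}{G_\sigma\neq\emptyset}$, this says precisely that every $\sigma\in R$ satisfies $K(\sigma)>|\sigma|-m$; note $R$ is a downward-closed subtree of $\twomel$. We may assume $\TT(Q,F)\neq\emptyset$, since otherwise $c_Q(G_\sigma)=0$ for all $\sigma$ and the hypothesis ``no constant $r$ works'' would fail; and then for any $T\in\TT(Q,F)$ the infinite perfect tree $\Phi(T)$ is contained in $R$, so $R$ is infinite. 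The goal is to deduce a single constant $r$ with $c_Q(G_\sigma)\le 2^{r-|\sigma|}$ for all $\sigma$, contradicting the hypothesis.

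First I would record the behaviour of $\sigma\mapsto c_Q(G_\sigma)$. For $\sigma\notin R$ it equals $0$, so only $\sigma\in R$ matter, and there $K(\sigma)>|\sigma|-m$. Since $\{\estring\}$ hits every tree, $c_Q(G_\sigma)\le\mu(Q)<1$, so $2^{|\sigma|}c_Q(G_\sigma)>2^{r}$ forces $|\sigma|>r$. Because $\Phi$ is total on $\TT(Q,F)$ and each $\Phi(T)$ is perfect, $G_\sigma\subseteq G_{\sigma 0}\cup G_{\sigma 1}$; taking unions of hitting sets gives the subadditivity $c_Q(G_\sigma)\le c_Q(G_{\sigma 0})+c_Q(G_{\sigma 1})$, i.e.\ $g(\sigma):=2^{|\sigma|}c_Q(G_\sigma)$ is a nonnegative submartingale on $\twomel$. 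By Lemma \ref{a42e15c4}, $G_\sigma$ is, uniformly in $\sigma$, a $\Pi^0_1$ class of trees, so, arguing as in Lemma \ref{0240295d} (searching for cheap hitting sets of clopen supersets), $c_Q(G_\sigma)$ is uniformly approximable from above. The hypothesis says precisely that $g$ is unbounded.

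The contradiction should come from locating a string $\sigma$ at which $g$ is large but which is cheap to describe. Given a threshold $r$, the hypothesis supplies $\sigma$ with $g(\sigma)>2^{r}$; I aim to find such a $\sigma$ with $K(\sigma)=\bigo{\log r}$. Then, since $g(\sigma)>0$ forces $\sigma\in R$ and $g(\sigma)>2^{r}$ forces $|\sigma|>r$, the deficiency of $\sigma$ is $>r-\bigo{\log r}$, which exceeds $m$ once $r$ is chosen large in terms of $m$; but $\sigma\in R$, so $K(\sigma)>|\sigma|-m$, a contradiction. To get the short description I would single out $\sigma$ canonically from $r$ — e.g.\ as the first string, in a fixed effective search, certified to satisfy $c_Q(G_\sigma)>2^{r-|\sigma|}$ — so that $\sigma$ is computable from $r$ and hence $K(\sigma)\le K(r)+\bigon=\bigo{\log r}$.

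The step I expect to be the main obstacle is the \emph{certification} above. The hitting cost $c_Q(G_\sigma)$ is naturally approximable only from above, so the predicate ``$c_Q(G_\sigma)>t$'' is not $\Sigma^0_1$, and producing with only $\bigo{\log r}$ bits of advice a string that genuinely has large hitting cost is the delicate point. Resolving it is exactly what the finitary machinery of this section is built for: Lemmas \ref{13958859} and \ref{a42e15c4} reduce everything to finite trees and finite hitting sets, while Lemma \ref{1604aefc} together with the envelope construction behind Lemmas \ref{96cab15f} and \ref{e46c53e5} converts the absence of a cheap hitting set for $G_\sigma$ into a finite, verifiable obstruction. Turning the unbounded, upper-semicomputable submartingale $g$ into a large value witnessed by an object effective enough to bound $K(\sigma)$ is the crux of the argument.
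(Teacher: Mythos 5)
Your proposal sets up the right objects and correctly isolates where the difficulty lies, but it does not close it, and the specific route you sketch for the crucial step cannot work as stated. You want to extract, from a threshold $r$ alone, a canonical string $\sigma$ \emph{certified} to satisfy $c_Q(\TT(Q,F)\cap\Phi^{-1}(\sigma))>2^{r-|\sigma|}$, so that $K(\sigma)=\bigo{\log r}$. But, as you yourself note, $c_Q$ is only approximable from above: the predicate ``$c_Q(\GG)<q$'' is $\Sigma^0_1$ (search for a finite $H$ hitting a clopen superset of $\GG$ with $\mu(\dbra{H}\cap Q_s)<q$ at some stage, as in Lemma \ref{0240295d}), so ``$c_Q(\GG)>q$'' is only co-semi-decidable. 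The first string passing your test in an effective search is therefore computable from $r$ only with oracle $\emptyset'$, which bounds $K^{\emptyset'}(\sigma)$, not $K(\sigma)$, and yields no contradiction with $K(\sigma)>|\sigma|-m$. Pointing to Lemmas \ref{13958859}, \ref{a42e15c4}, \ref{1604aefc}, \ref{96cab15f} does not repair this: those lemmas make hitting-sets finite and costs upper-semicomputable, which is exactly the wrong direction for certifying a \emph{large} cost with few bits.

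The paper's proof of Lemma \ref{59679e46} resolves this by an amortized enumeration rather than a single cheap description. For each $n$ it runs a loop that enumerates into a c.e.\ set $V_n$ any $\sigma$ (of length $>2n$) whose \emph{stage-approximated} cost $c_{Q_s}(\TT(Q_s,F)\cap\Phi_t^{-1}(\sigma))$ exceeds $2^{2n+1-|\sigma|}$, and then waits for the approximated class to empty before enumerating the next candidate. The point is the charging argument: if a candidate is later discredited, the only way the class $\TT(Q_s,F)\cap\Phi_t^{-1}(\sigma)$ can empty is that measure at least $2^{2n+1-|\sigma|}$ leaves $Q$, so each enumerated weight $2^{-|\sigma|}$ is paid for by $2^{-2n-1}\mu(Q_s-Q_r)$, giving $\sum_{\sigma\in V_n}2^{-|\sigma|}\leq 2^{-2n}$; Lemma \ref{cbce2bda} then compresses \emph{every} string in $V_n$, true witness or not. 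Finally, the hypothesis supplies a $\sigma$ of genuinely large cost (using $c_{Q_s}\geq c_Q$), the loop eventually enumerates it and gets stuck at the waiting step, and compactness converts ``stuck forever'' into $\TT(Q,F)\cap\Phi^{-1}(\sigma)\neq\emptyset$, producing the required $T$ with $\sigma\in\Phi(T)$ of deficiency at least $m$ — a direct construction of the witness, not a contraposition. This stage-wise enumeration with the measure-loss charging and the KCL compression of all candidates is the missing idea in your proposal.
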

\begin{proof}
	For each $n$, enumerate $V_n$ by the following loop, starting with $s=0$.

	\begin{enumerate}
		\item Find $t$ and $\sigma$ with $t>s$ and $|\sigma|>2n$, such that $\dbra{\sigma}\cap V_{n,s}=\emptyset$ and \[c_{Q_s}(\TT(Q_s,F)\cap\Phi^{-1}_t(\sigma))>2^{2n+1-|\sigma|}.\] Enumerate the least such $\sigma$ into $V_n$.
		\item Wait for a stage $r$ such that $\TT(Q_r,F)\cap\Phi^{-1}_t(\sigma)=\emptyset$. Set $s:=r$ and go back to step 1.
	\end{enumerate}

	Consider a single loop. From $\TT(Q_s,F)\cap\Phi^{-1}_t(\sigma)$ to $\TT(Q_r,F)\cap\Phi^{-1}_t(\sigma)=\emptyset$ only trees that have a path in $Q_s-Q_r$ are removed, so $Q_s-Q_r$ hits $\TT(Q_s,F)\cap\Phi^{-1}_t(\sigma)$. Therefore \[\mu(Q_s-Q_r)=\mu((Q_s-Q_r)\cap Q_s)>2^{2n+1-|\sigma|},\] so $2^{-|\sigma|}\leq 2^{-2n-1}\mu(Q_s-Q_r)$. Taking the sum over all loops, taking into account that the last loop could be incomplete (where $r$ is never found) and that $|\sigma|>2n$, we have \[\sum_{\sigma\in V_n}2^{-|\sigma|}\leq 2^{-2n-1}+2^{-2n-1}=2^{-2n}.\]

	Now $V_n$ is uniformly c.e., using Lemma \ref{cbce2bda} there is a constant $c$ such that $K(\sigma)\leq|\sigma|-n+c$ for all $\sigma\in V_n$. Then $K(\sigma)\leq|\sigma|-m$ for all $\sigma\in V_{m+c}$, \ie each string in $V_{m+c}$ has deficiency at least $m$.

	There is $\sigma$ with $|\sigma|>2n$ such that $c_Q(\TT(Q,F)\cap\Phi^{-1}(\sigma))>2^{2(m+c)+1-|\sigma|}$, since otherwise by taking $r$ be the maximum of $2(m+c)+1$ or the constant needed for all $|\sigma|\leq 2n$, we have $c_Q(\TT(Q,F)\cap\Phi^{-1}(\sigma))\leq 2^{r-|\sigma|}$ which contradicts the condition.
	
	As $Q_s\supset Q$ we have $c_{Q_s}\geq c_Q$, so $c_{Q_s}(\TT(Q,F)\cap\Phi^{-1}(\sigma))>2^{2(m+c)+1-|\sigma|}$. Eventually such a $\sigma$ is enumerated into $V_{m+c}$, and then eventually the construction of $V_{m+c}$ gets stuck at step 2. Therefore $\TT(Q,F)\cap\Phi^{-1}(\sigma)\neq\emptyset$, \ie there is $T\in\TT(Q,F)$ with $\sigma\in\Phi(T)$. With $\sigma\in V_{m+c}$, $\Phi(T)$ has deficiency at least $m$, as desired.
\end{proof}

Finally we can prove Theorem \ref{4ab6fec4}.

\begin{proof}[Proof of Theorem \ref{4ab6fec4}]
	We build the tree from $F_{-1}=\{\lambda\}$ which is extendable in $\TT(Q)$, and for each index $e$ and constant $m$, we extend the tree $F_{\brag{e,m}-1}$ to some $F_\brag{e,m}$ that is extendable in $\TT(Q)$, satisfying
	\begin{center}
		$R_{e,m}$: for any tree $T\succ F_\brag{e,m}$, if $\Phi_e$ is total on $T$ within \ou $f$, then $\Phi_e(T)$ has deficiency at least $m$.
	\end{center}
	Finally we let $T=\bigcup_i F_i$. Then for any $e$, if $\Phi_e$ is total on $T$ within \ou $f$, then $\Phi_e(T)$ has infinite deficiency. Thus this $T$ does not compute any path-incompressible perfect tree within \ou $f$, as desired.

	Now we show how to extend $F_{\brag{e,m}-1}$ to some $F_\brag{e,m}$ extendable in $\TT(Q)$, satisfying $R_{e,m}$. For simplicity write $F=F_{\brag{e,m}-1}$. One of the following two cases happens.
	\begin{itemize}
		\item $\Phi_e$ is not total on some $T\in\TT(Q,F)$ within \ou $f$.
		
		There is some $n$ such that $\Phi_e(T,n)$ does not halt within \ou $f(n)$. Let $F'=T\upharpoonright_{f(n)}$, then $F\prec F'\prec T$ and for any $T'\succ F'$, $\Phi_e(T')$ does not halt within \ou $f(n)$. Therefore there is $n$ and $F'$ such that
		\begin{enumerate}
			\item $F'\succ F$ and $F'$ is extendable in $\TT(Q)$, \ie $\TT(Q,F')\neq\emptyset$,
			\item $\Phi_e(F',n)$ uses oracle no more than $F'$ and does not halt.
		\end{enumerate}

		Using oracle $\emptyset'$ and $f$, we can effectively check if $n$ and $F'$ witness that this case happens. If we found such $n$ and $F'$, let $F_\brag{e,m}=F'$, then for any $T\succ F_\brag{e,m}$, $\Phi_e(T)$ is not total within \ou $f$.

		\item $\Phi_e$ is total on all $T\in\TT(Q,F)$ within \ou $f$.
		
		By Lemma \ref{e46c53e5}, since $\TT(Q,F)$ is non-empty by the previous steps of the construction, there is no constant $r$ such that $c_Q(\TT(Q,F)\cap\Phi_e^{-1}(\sigma))\leq 2^{r-|\sigma|}$. Then by Lemma \ref{59679e46}, there is some $T\in\TT(Q,F)$ such that $\Phi_e(T)$ has deficiency at least $m$, \ie it contains some $\sigma$ with deficiency at least $m$. Then again there is some $n$ such that $\sigma\in\Phi_e(T,n)$, and similarly there is $n$ and $F'$ such that
		\begin{enumerate}
			\item $F'\succ F$ and $F'$ is extendable in $\TT(Q)$, \ie $\TT(Q,F')\neq\emptyset$,
			\item $\Phi_e(F',n)$ uses oracle no more than $F'$ and has deficiency at least $m$.
		\end{enumerate}

		Also using oracle $\emptyset'$, we can effectively check if $n$ and $F'$ witness that this case happens. If we found such $n$ and $F'$, let $F_\brag{e,m}=F'$, then for any $T\succ F_\brag{e,m}$, $\Phi_e(T)$ has deficiency at least $m$.
	\end{itemize}
\end{proof}

\section{Perfect path-incompressible trees}\label{Cz7pIhPLAK}
We examine the
density of branching in path-incompressible trees,
and the possibility of effectively increasing it.
The density of branching can be formalized in terms of a computable increasing sequence $(\ell_n)$ as follows.
\begin{defi}\label{BO79cnNwnp}
Given computable increasing $\ell=(\ell_n)$, tree $T$, $z\in [T]$ we say that
\begin{itemize}
\item $T$ is {\em $\ell$-perfect} if for almost all 
$n$, each $\sigma\in T\cap 2^{\ell_n}$ has $\geq 2$ extensions in $T\cap 2^{\ell_{n+1}}$
\item $z$ is {\em $(\ell, T)$-branching} if for almost all  $n$, 
each $z\restr_{\ell_n}$ has $\geq 2$ extensions in $T\cap 2^{\ell_{n+1}}$,
\end{itemize}
where {\em almost all $n$} means all but finitely many $n$.
\end{defi}
We use the following  assembly of items from \citep{codico}:
\begin{lem}\label{2PimWgH6lS}
Let $\ell=(\ell_i)$ be computable  and increasing, and 
$P$ be a positive \pz tree with $[P]$ consisting entirely of randoms. 
\begin{enumerate}[\hspace{0.3cm}(i)]
\item If $\sum_i 2^{-(\ell_{i+1}-\ell_{i})}=\infty$, every $(\ell,P)$-branching  $z$ is incomplete; also there are
arbitrarily large $n$ and $\sigma\in 2^{\ell_n}\cap P$ which have exactly one extension in $2^{\ell_{n+1}}\cap P$.
\item If $\sum_i 2^{-(\ell_{i+1}-\ell_{i})}<\infty$,  there exists an  $\ell$-perfect tree $T\subseteq P$ and an injection $z\mapsto f(z)\in [T]$ such that $z\leq_T f(z)$.
\end{enumerate}
\end{lem}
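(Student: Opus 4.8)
The lemma bundles two facts about the \pz class $P$ in opposite regimes, and I would prove them separately; both are, as the paper notes, repackaged from \citep{codico}, with (ii) resting on the fast-branching Lemma~\ref{af272f1b} and (i) on the Kraft--Chaitin bookkeeping of Lemma~\ref{cbce2bda}. The plan for each is to feed the series $\sum_i 2^{-(\ell_{i+1}-\ell_i)}$ into one of these two engines at the right scale.

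\emph{Part (ii).} Since $\sum_i 2^{-(\ell_{i+1}-\ell_i)}<\infty$, the tail $\sum_{i\ge N}2^{-(\ell_{i+1}-\ell_i)}$ can be made smaller than $\mu([P])$ by choosing $N$ large; applying Lemma~\ref{af272f1b} to $[P]$ with the sequence $(\ell_i)_{i\ge N}$ produces a \pz subclass $Q\subseteq[P]$ in which every node of length $\ell_i$ meeting $Q$ has at least two length-$\ell_{i+1}$ extensions meeting $Q$, for each $i\ge N$. Let $T$ be the pruned tree of nodes extendible in $Q$: then $T\subseteq P$, $[T]=[Q]\subseteq[P]$ so $[T]$ consists of randoms, and $T$ is $\ell$-perfect because only finitely many levels were discarded. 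I would obtain $f$ by routing: at each level $\ell_n$, $n\ge N$, fix two designated extendible extensions of each node of $T$, and let $f(z)$ be the path of $T$ that at level $\ell_{N+k}$ follows the designated extension picked out by $z(k)$. Injectivity is immediate, and $z\le_T f(z)$ because the branch taken at each level is recoverable from $f(z)$. The one delicate point here --- and a secondary obstacle --- is making that recovery effective in $f(z)$ \emph{alone} (not in $f(z)\oplus\emptyset'$): this forces the two designated extensions to be chosen with some care, e.g.\ after first passing to a cylinder of large relative $P$-measure (Lebesgue density) so that at each branching one can insist the two extensions split at a bit-position depending only on $n$, making the decoding a literal read-off.

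\emph{Part (i), pinch nodes.} Suppose toward a contradiction that past some level every $\sigma\in P\cap 2^{\ell_n}$ has at least two extensions in $P\cap 2^{\ell_{n+1}}$. Then a tail of $P$ is itself $\ell$-perfect, hence an $\ell$-perfect path-random tree into which $2^{<\omega}$ embeds through the branching levels. Routing a \ml random through this embedding and estimating the complexity of the resulting path's $\ell_n$-prefixes against the divergence $\sum_i 2^{-(\ell_{i+1}-\ell_i)}=\infty$ --- via a Kraft--Chaitin request set as in Lemma~\ref{cbce2bda} --- should force a path of $[P]$ with unbounded deficiency, contradicting that $[P]$ consists of randoms; this is the mechanism behind the $\lnot\text{(iii)}\Rightarrow\lnot\text{(i)}$ half of Theorem~\ref{ElRZki9oa}, and for a general \pz class it takes some work since the embedding need not be effective, so one instead argues with the approximation of $P$ directly. (Alternatively, every path of such a $P$ would be $(\ell,P)$-branching, and one could invoke the first assertion.)

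\emph{Part (i), incompleteness --- the main obstacle.} Let $z$ be $(\ell,P)$-branching; then $z\in[P]$, so $z$ is random, and I assume for contradiction $\emptyset'\le_T z$. Then $z$ can decide, for each $n$, which length-$\ell_{n+1}$ extensions of $z\restr_{\ell_n}$ are extendible in $P$ (a $\Pi^0_1$, hence $\Delta^0_2$, question), and the branching hypothesis supplies, for all large $n$, a sibling extension $\tau_n\ne z\restr_{\ell_{n+1}}$ computably in $z$. The plan is to convert these siblings --- whose relative weights add up along $z$ precisely because $\sum_i 2^{-(\ell_{i+1}-\ell_i)}=\infty$ --- into a left-c.e.\ (super)martingale or a Solovay test that $z$ is caught by, contradicting the randomness of $z$. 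The hard part is that the objects built from the siblings are only $\emptyset'$-effective, so the argument must use $\emptyset'\le_T z$ to make the bookkeeping computable without ever placing $z$ inside the test itself, and the divergence hypothesis has to be injected at exactly the right scale --- the mirror image of how Lemma~\ref{af272f1b} is applied in part (ii). I expect this last balancing act to be where essentially all of the difficulty lies.
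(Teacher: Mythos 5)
Note first that the paper does not reprove these facts: its proof of Lemma~\ref{2PimWgH6lS} is an assembly of citations to \citep{codico} (clause (ii) is Lemma~2.2 there, the pinch-node claim is Lemma~2.4 with Corollary~2.9, and the incompleteness claim is Lemma~2.8 combined with the equivalence ``random and incomplete iff difference-random''), so your from-scratch attempt is a different route and must be judged on whether it closes. The main gap is in the incompleteness half of (i). Your plan is to turn the sibling extensions along $z$ into a left-c.e.\ supermartingale or Solovay test ``that $z$ is caught by, contradicting the randomness of $z$''; this cannot work in that form, because $z\in[P]$ is \ml random by hypothesis, so no Solovay test and no left-c.e.\ supermartingale succeeds on $z$, whether or not $z\geq_T\emptyset'$. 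The contradiction has to be wrung out of the conjunction ``random \emph{and} complete'', which requires a strictly finer device than ML-tests --- difference tests/density, which is exactly why the paper routes through difference-randomness and why the quantitative content of \citep[Lemma~2.8]{codico} (how the divergent series interacts with the levels $\ell_n$) is the real work. Your remark that one should ``use $\emptyset'\leq_T z$ to make the bookkeeping computable'' also points the wrong way: a test whose construction consults $z$ (or $\emptyset'$ through $z$) is not an effective test against which the randomness of $z$ says anything; the standard arguments instead fix a functional $\Gamma$ with $\Gamma(z)=\emptyset'$ and use its correct computations to drive an effective (difference) test. Since you yourself defer ``essentially all of the difficulty'' to this step, the core of clause (i) is not proved.

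For the pinch-node claim, your primary mechanism (Kraft--Chaitin compression of the $\ell_n$-prefixes) is backwards: everywhere-branching does not make paths of $P$ compressible --- it is the presence of pinch nodes that yields compression --- so divergence of the series cannot be cashed in that way. Your parenthetical fallback is the right idea but is missing its second half: after concluding from the first assertion that every path of such a $P$ would be incomplete, you need a complete path of $P$ to contradict, i.e.\ Ku\v{c}era's theorem that a positive-measure $\Pi^0_1$ class contains a tail of every random, in particular a path Turing equivalent to $\Omega\geq_T\emptyset'$; state and use that. Clause (ii) has the right architecture (Lemma~\ref{af272f1b} on a tail of $(\ell_i)$, then Ku\v{c}era--G\'{a}cs routing), and this matches what the cited \citep[Lemma~2.2]{codico} does; but the one point you flag --- recovering the coded bits from $f(z)$ alone, effectively --- is precisely the content of that lemma, and your proposed fix (forcing the two designated extensions to ``split at a bit-position depending only on $n$'') is asserted, not constructed, and is not obviously achievable inside an arbitrary positive $\Pi^0_1$ class of randoms. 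As it stands the proposal identifies the correct ingredients but leaves the two genuinely hard steps --- the difference-test argument for incompleteness and the effective decoding in the coding argument --- unproved.
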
\begin{proof}
The first part of (i) is by \citep[Lemma 2.8]{codico},
taking into account that a random is difference-random iff it does not compute $\zp$.
The second part of (i) is by \citep[Lemma 2.4 \& Corollary 2.9]{codico}.
Clause (ii) is  \citep[Lemma 2.2]{codico}.
\end{proof}

\subsection{Density of branching in path-incompressible trees}\label{qhzXZfYv4}
We characterize the density of branching that a 
perfect path-random tree can have:
\begin{thm}\label{KmILdNfsur}
Given computable increasing $\ell=(\ell_n)$, the following are equivalent:
\begin{enumerate}[\hspace{0.3cm}(a)]
\item $\exists$ an $\ell$-perfect path-random tree
\item $\exists$ an $\ell$-perfect path-incompressible tree 
\item $\sum_n 2^{-(\ell_{n+1}-\ell_n)}<\infty$.
\end{enumerate}
\end{thm}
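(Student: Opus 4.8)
The plan is to prove the three implications (c) $\Rightarrow$ (b) $\Rightarrow$ (a) $\Rightarrow$ (c), using the available Lemma~\ref{2PimWgH6lS} as the main engine, together with Lemma~\ref{cbce2bda} and Lemma~\ref{nBTeELXDu2} for the complexity bookkeeping. For (c) $\Rightarrow$ (b): assuming $\sum_n 2^{-(\ell_{n+1}-\ell_n)}<\infty$, start from a positive $\Pi^0_1$ tree $P$ all of whose paths are random (such $P$ exists, e.g. the complement of a \ml test component), and apply Lemma~\ref{2PimWgH6lS}(ii) to obtain an $\ell$-perfect subtree $T\subseteq P$. Since $[T]\subseteq[P]$ and $[P]$ consists of randoms, $T$ is path-random; but for path-incompressibility we need a \emph{uniform} bound on the deficiency of the finite strings of $T$, not just of its paths. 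Here I would invoke that $P$ can be taken to have deficiency bounded by some constant $d$ as a set of strings — indeed the standard positive $\Pi^0_1$ tree of "not-too-deficient" strings, as fixed in \S\ref{P4zb649F}, has exactly this property — so any subtree of it, in particular $T$, is path-incompressible. (If one prefers to build $T$ directly: the prefixes of $T$ of length $\ell_{n}$ form a $\Pi^0_1$ approximable family whose total measure is controlled, and Lemma~\ref{cbce2bda} converts the measure bound into a complexity bound; but piggy-backing on the fixed bounded-deficiency $P$ is cleaner.)

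The implication (b) $\Rightarrow$ (a) is immediate: a path-incompressible tree is by definition pruned with finite deficiency, so all of its paths are random, hence it is path-random; and $\ell$-perfection is the same hypothesis in both clauses. So the only content here is unwinding the definitions.

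For (a) $\Rightarrow$ (c) — the contrapositive is the substantive direction — suppose $\sum_n 2^{-(\ell_{n+1}-\ell_n)}=\infty$, and let $T$ be any $\ell$-perfect pruned tree; I want to show $T$ has a non-random path, so $T$ is not path-random. The idea is a direct Borel–Cantelli / first-moment argument on the tree: for each node $\sigma$ of length $\ell_n$ in $T$, $\ell$-perfection (for almost all $n$) gives at least two length-$\ell_{n+1}$ extensions in $T$, so among the $2^{\ell_{n+1}-\ell_n}$ a priori extensions of $\sigma$, two are "hit". Collecting, for each $n$, the set $V_n$ of strings $\tau$ of length $\ell_{n+1}$ that extend a length-$\ell_n$ node of $T$ and are \emph{not} one of the (at most, say, the lexicographically first two) designated extensions — no, more simply: consider the strings of $T\cap 2^{\ell_{n+1}}$ themselves. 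Since each $\sigma\in T\cap 2^{\ell_n}$ contributes at most all $2^{\ell_{n+1}-\ell_n}$ of its extensions but $T$ is a tree, $\mu(T\cap 2^{\ell_{n+1}}) \le \mu(T\cap 2^{\ell_n})$, which gives nothing by itself. The right move instead is the classical one underlying Lemma~\ref{2PimWgH6lS}(i): along a fixed path $z\in[T]$, at each level $\ell_n$ the node $z\restr_{\ell_n}$ has a second extension in $T$ besides the one on $z$, and these "branching witnesses" let one build a \ml test capturing $z$ when $\sum_n 2^{-(\ell_{n+1}-\ell_n)}=\infty$ — precisely the divergence needed so that the cumulative measure of the test components stays below the required thresholds while still covering $z$. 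Rather than reproving this, I would simply cite Lemma~\ref{2PimWgH6lS}(i) applied to a positive $\Pi^0_1$ refinement, or note that (a) $\Rightarrow$ (c) follows because an $\ell$-perfect path-random tree would in particular contain, by Lemma~\ref{nqjFhRoRlv}, a path-incompressible tail which is still $\ell$-perfect, contradicting the first part of Lemma~\ref{2PimWgH6lS}(i) (every $(\ell,P)$-branching real is incomplete, yet an $\ell$-perfect tree has a computable-in-itself branching structure forcing some path to compute $\emptyset'$) — this reduction to the known lemma is the part I expect to require the most care, since one must first pass from the arbitrary path-random tree to a positive $\Pi^0_1$ tree to which Lemma~\ref{2PimWgH6lS} literally applies.

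The main obstacle, then, is the (a) $\Rightarrow$ (c) direction: making rigorous the passage from a general $\ell$-perfect path-random tree (which need not be $\Pi^0_1$ and need not be positive) to the setting of Lemma~\ref{2PimWgH6lS}, and then extracting the contradiction from the divergence of $\sum_n 2^{-(\ell_{n+1}-\ell_n)}$. Everything else is definition-chasing or a citation.
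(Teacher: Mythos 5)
Your (c)$\to$(b) and (b)$\to$(a) are fine and coincide with the paper's proof: Lemma~\ref{2PimWgH6lS}(ii), applied to the fixed positive $\Pi^0_1$ tree of bounded deficiency, gives (c)$\to$(b), and (b)$\to$(a) is definition-chasing; your use of Lemma~\ref{nqjFhRoRlv} to pass to a path-incompressible, still $\ell$-perfect tail inside such a $P$ also matches the paper's (a)$\to$(b) step. The gap is exactly where you flag it, in getting from there to $\neg$(c) being violated, and neither mechanism you sketch closes it. The tree $T$ witnessing (a) (or its tail) is an arbitrary, non-effective tree, so its ``branching witnesses'' cannot be enumerated into a Martin-L\"{o}f test, and you give no reason why some path of an $\ell$-perfect path-incompressible tree should compute $\emptyset'$, so the first clause of Lemma~\ref{2PimWgH6lS}(i) (incompleteness of $(\ell,P)$-branching reals) yields no contradiction. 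The clause that is actually relevant is the second one --- arbitrarily large $n$ and $\sigma\in 2^{\ell_n}\cap P$ with exactly one extension in $2^{\ell_{n+1}}\cap P$ --- but it speaks about the ambient $\Pi^0_1$ tree, not about $T\subseteq P$: the offending $\sigma$ need not lie in $T$, so $T$'s $\ell$-perfection is untouched. Citing the lemma ``applied to a positive $\Pi^0_1$ refinement'' therefore does not suffice; one needs an effective tree that branches at \emph{every} node of length $\ell_n$.

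The paper supplies precisely this missing device by a compactness argument. Let $\FF_{\ell}(P)$ be the class of subtrees of $P$ in which each node of length $\ell_n$ has exactly two extensions of length $\ell_{n+1}$, and set $Q:=\sqbrad{\sigma}{\exists T\in \FF_{\ell}(P),\ \sigma\in T}$. By compactness of the space of trees, $Q$ is a $\Pi^0_1$ subtree of $P$; it is infinite as soon as some $\ell$-perfect path-incompressible tree exists (prune such a tree down to a member of $\FF_{\ell}(P)$); and, unlike $P$ or $T$, every $\sigma\in Q\cap 2^{\ell_n}$ has at least two extensions in $Q\cap 2^{\ell_{n+1}}$. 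Applying the second clause of Lemma~\ref{2PimWgH6lS}(i) to $Q$ under $\neg$(c) then produces a node of $Q$ with at most one extension at the next level, a contradiction, giving $\neg$(c)$\to\neg$(b) and hence (a)$\to$(c) via the tail argument. Without this construction of $Q$ (or an equivalent way of extracting an effective, everywhere-$\ell$-branching tree of randoms from the mere existence of $T$), your (a)$\to$(c) direction does not go through.
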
\begin{proof}
Implication (b)$\to$(a) is trivial, while
(c)$\to$(b) follows from  Lemma \ref{2PimWgH6lS} (ii).

By Lemma \ref{nqjFhRoRlv} we get (a)$\to$(b).
For $\neg$(c)$\to$$\neg$(b), let 
$\FF_{\ell}$ be the class of trees $T$ such that each
$\sigma\in T\cap 2^{\ell_n}$ has exactly two extensions in $T\cap 2^{\ell_{n+1}}$.
Let $P$ be a  \pz pruned tree $P$ of finite deficiency and
let $\FF_{\ell}(P)$ be the restriction of $\FF_{\ell}$ to trees $T$ with $T\subseteq P$.
Then
\begin{enumerate}[\hspace{0.3cm}(I)]
\item $Q:=\sqbrad{\sigma}{\exists T\in \FF_{\ell}(P),\ \sigma\in T}$ is a \pz subtree of $P$, by compactness.
\item each $\sigma\in Q\cap 2^{\ell_n}$ has at least two extensions in $Q\cap 2^{\ell_{n+1}}$.
\end{enumerate}
If $\FF_{\ell}(P)\neq\emptyset$ then $Q$ is infinite; so
for $\neg$(c)$\to$$\neg$(b) it remains to show that $Q$ is finite. 
Assuming otherwise,
by $\neg$(c) and the second clause of Lemma \ref{2PimWgH6lS}(i), 
there exists $n$ and $\sigma\in Q\cap 2^{\ell_n}$ which has at most 
one extension in $2^{\ell_{n+1}}\cap Q$. But this contradicts (II) above. 
\end{proof}
Next, we consider the branching density along a path in a path-incompressible \pz tree $P$.
It is known that Turing-hard members of $P$ have low density in $P$, hence sparse branching.

\begin{defi}
Given $Q\subseteq\twome$,  the {\em density of $z$ in $Q$} is given by
\[
\rho(Q\mid z):=
\liminf_n \mu\parb{Q \mid \dbra{z\restr_n}}=
\liminf_n 2^n\cdot \mu\parb{Q\cap \dbra{z\restr_n}}.
\]
A real is a
{\em positive-density point} if it has 
positive-density in every \pz tree that it belongs to.
\end{defi}
\citet{DenjoyBHMN14} showed that a random $z$ is a positive-density point iff $z\not\geq_T\zp$.
Positive-density random reals are not necessarily {\em density-1 reals} (density tends to 1)
and the complete characterization of density-1 reals is an open problem. However 
a  positive-density random real can have arbitrarily high density, in an appropriately chosen \pz tree $P$:
\begin{lem}\label{iKVOwZhlH9}
If $z$ is a positive-density random real and $\epsilon>0$, there exists \pz pruned tree $P$ of finite deficiency such that  $z\in P$
and the $P$-density of $z$ is $>1-\epsilon$.
\end{lem}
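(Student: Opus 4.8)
The plan is to start from a \pz pruned tree of finite deficiency to which $z$ belongs — for instance, $P_0 := \sqbrad{\sigma}{\text{deficiency of }\sigma\leq d}$ for a large enough $d$, which is \pz, pruned (its paths are exactly the $d$-incompressible reals), and contains $z$ since $z$ is random. Since $z$ is a positive-density point, $\rho(P_0\mid z) > 0$; fix a rational $\delta$ with $0 < \delta < \rho(P_0\mid z)$, so that $2^n\cdot\mu(P_0\cap\dbra{z\restr_n}) \geq \delta$ for all but finitely many $n$. The idea is then to carve out of $P_0$ a thinner \pz tree $P$ in which $z$ has density close to $1$, by ``zooming in'' on the portions of $P_0$ around $z$ and deleting the low-density pieces elsewhere, while keeping enough of $P_0$ that all remaining paths still have finite deficiency.

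Concretely, I would build $P$ as follows. Work relative to a fast-growing computable sequence of levels $(m_k)$. At level $m_k$, for each node $\sigma\in P_0\cap 2^{m_k}$, examine the conditional measure $2^{m_k}\mu(P_0\cap\dbra{\sigma})$; if this is below some threshold $\eta_k\to 0$ chosen so that $\sum_k \eta_k < \infty$ in the appropriate weighted sense, delete the subtree of $P_0$ below $\sigma$ (i.e. keep only $\sigma$ itself as a dead-end candidate — but since we need $P$ pruned, we instead just remove $\sigma$ and all its extensions, being careful that this does not disconnect $z$). The key point is that along $z$, by the density bound $\rho(P_0\mid z)>\delta$, the node $z\restr_{m_k}$ is never deleted once $k$ is large; and once we pass to the $z\restr_{m_k}$-tail for large $k$, the measure lost to deletions at all subsequent levels is a small fraction of $\mu(P_0\cap\dbra{z\restr_{m_k}})$. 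Rescaling, this forces the $P$-density of $z$ above $1-\epsilon$. To keep deficiency finite: the set of strings deleted at level $m_k$ has total $\mu$-mass at most (number of surviving nodes)$\times\eta_k$, and choosing $\eta_k$ to decay doubly-exponentially relative to $m_k$ makes $\sum_k 2^{-m_k}\cdot(\text{stuff})$ summable, so by Lemma \ref{cbce2bda} (applied to the uniformly c.e.\ sequence recording which strings get deleted at ``stage $k$'') the deleted strings have deficiency $\to\infty$; hence surviving nodes, which are also $d$-incompressible in $P_0$, retain a finite deficiency bound, and $P$ is path-incompressible. One must also truncate the deletions so that $P$ stays pruned — e.g.\ never delete a node unless it has a surviving sibling-subtree, or equivalently only delete within subtrees that still leave $P_0$ below their parent nonempty; a standard bookkeeping argument handles this.

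The main obstacle I expect is the simultaneous control of two competing demands: on one hand we must delete \emph{enough} low-density subtrees of $P_0$ (away from $z$) to push the conditional density of what remains above $1-\epsilon$; on the other hand each deletion risks either (i) making $P$ non-pruned, or (ii) deleting a string of small deficiency, which would then be ``cheap'' to describe and could pull the deficiency of $P$ back up toward $0$ — wait, that is backwards, deleting cheap strings is fine, the danger is that we \emph{cannot} delete enough because the low-density region might consist of high-deficiency (hence expensive, hence numerous) strings. The resolution is that low density in $P_0$ is exactly the regime where Lemma \ref{cbce2bda}-type counting lets us certify high deficiency of the deleted part, so low-density and deletable-for-free coincide; making this quantitative — choosing the level spacing $(m_k)$ and thresholds $(\eta_k)$ so that the density gain telescopes to $1-\epsilon$ while the deficiency bookkeeping closes — is the technical heart of the argument. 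The positive-density hypothesis on $z$ is used in precisely one place: to guarantee $z$ itself is never a victim of these deletions, i.e.\ $z$ stays a path of $P$.
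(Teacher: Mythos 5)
There is a genuine gap, and it lies in the main step rather than in the bookkeeping. The density $\rho(P\mid z)=\liminf_n 2^n\mu(P\cap\dbra{z\restr_n})$ is measured against the full cylinder $\dbra{z\restr_n}$, not relative to $P_0$. Your construction produces $P\subseteq P_0$, and passing to a subtree can only decrease $\mu(P\cap\dbra{z\restr_n})$, so $\rho(P\mid z)\le\rho(P_0\mid z)$; but the positive-density hypothesis only gives $\rho(P_0\mid z)>\delta$ for some possibly tiny $\delta$. For a fixed deficiency bound $d$ there is no reason the density of $z$ in $P_0$ exceeds $1-\epsilon$, and you cannot assume it improves as $d$ grows: as the paper notes, positive-density randoms need not be density-one points, and that is precisely the obstruction this lemma is designed to get around. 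Hence deleting low-density subtrees of $P_0$ away from $z$ and then ``rescaling'' does not, and cannot, push the density above $1-\epsilon$ --- there is no rescaling in the definition of $\rho$. (Relatedly, your deficiency bookkeeping is vacuous for a subtree of $P_0$: any subtree of $P_0$ trivially has deficiency at most $d$; the deficiency issue only becomes nontrivial once the tree is enlarged.)

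The correct move is the opposite of thinning: $P$ must properly contain $P_0$, admitting reals of larger (but still finite) deficiency so as to fill up a $(1-\epsilon)$-fraction of every cylinder along $z$. The paper's proof does this with the Ku\v{c}era power trick: take a prefix-free c.e.\ $V_0$ with $\dbra{V_0}=\dbra{\sqbrad{\sigma}{K(\sigma)<\abs{\sigma}-c}}$, so positive density of $z$ in $P_0$ (the complement of $\dbra{V_0}$) gives $\mu_{\tau}(V_0)<1-\delta$ for all $\tau\prec z$; then set $V:=(V_0)^k$ with $(1-\delta)^k<\epsilon$, so $\mu_{\tau}(V)\le\parb{\mu_{\tau}(V_0)}^k<\epsilon$ along $z$, while by Lemma~\ref{nBTeELXDu2}(i) $V$ still contains a prefix of every nonrandom. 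The pruned \pz tree $P$ with $[P]=\twome-\dbra{V}$ then contains only randoms, satisfies $P\supseteq P_0$ and $z\in P$, and has $P$-density of $z$ above $1-\epsilon$. Your plan contains no mechanism playing the role of this amplification step, and without one the target density $1-\epsilon$ is out of reach.
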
\begin{proof}
Since $z$ is random, $\exists c\ \forall n\ K(z\restr_n)\geq n-c$. Let $V_0$ be a \ce \pf set such that 
\[
\dbra{V_0}=\dbra{\sqbrad{\sigma}{K(\sigma)< |\sigma|-c}}
\]
and let $P_0$ be the \pz pruned tree consisting of the strings with no prefix in $V_0$. Then $z\in [P_0]$ and
since $z$ is a positive-density real, there exists $\delta>0$ such that the $P_0$-density of $z$ is $>\delta$, so
\[
\forall \tau\prec z: \ \mu_{\tau}(V_0)<1-\delta.
\] 
Let $k$ be such that $(1-\delta)^k<\epsilon$ and let $V:=(V_0)^k$, so for each $\tau\in P_0$, $\mu_{\tau}(V)\leq \parb{\mu_{\tau}(V_0)}^k<\epsilon$.
By Lemma \ref{nBTeELXDu2} (i), $V$ is \ce and contains a prefix of every non-random real.
Let $P$ be the pruned \pz tree such that $[P]=\twome-\dbra{V}$. 

Then $P\supseteq P_0$, $z\in P$,  and $P$ contains only random reals.
So $\mu_{\tau}(P)>1-\parb{1-\mu_{\tau}(P_0)}^k$ and
\[
\tau\prec z\ \Rightarrow\ \mu_{\tau}(P)>1-\parb{1-\mu_{\tau}(P_0)}^k>1-(1-\delta)^k>1-\epsilon
\]
which shows that the density of $z$ in $P$ is $>1-\epsilon$.
\end{proof}
We now show a gap theorem: an incomplete random real can be everywhere branching inside some 
path-incompressible \pz  tree $P$, but the branching density of a Turing-hard random real in such $P$ 
is precisely and considerably more sparse.  
\begin{thm}\label{ZkBuECAo7z}
Given computable increasing $\ell=(\ell_n)$, the following are equivalent:
\begin{enumerate}[\hspace{0.3cm}(a)]
\item $\forall$ path-incompressible \pz  tree $P$, $\exists$  $(\ell, P)$-branching $z\geq_T \emptyset'$
\item $\exists$ \pz path-incompressible tree $P$ and $(\ell, P)$-branching $z\geq_T \emptyset'$
\item $\sum_n 2^{-(\ell_{n+1}-\ell_n)}<\infty$.
\end{enumerate}
If $z\not\geq_T \emptyset'$ is random, $\ell_n=n$, 
$\exists$ path-incompressible \pz  tree $P$ such that $z$ is $(\ell, P)$-branching.
\end{thm}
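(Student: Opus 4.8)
The plan is to prove the three-way equivalence via the cycle (a)$\to$(b)$\to$(c)$\to$(a), with Lemma~\ref{2PimWgH6lS} doing the heavy lifting, and then to read off the final displayed statement from Lemma~\ref{iKVOwZhlH9} and the density characterization of \citet{DenjoyBHMN14}. The implication (a)$\to$(b) is immediate once we note that path-incompressible \pz trees exist: for $d$ large enough that the set of reals of deficiency $\leq d$ is nonempty, the pruned \pz tree of its prefixes has deficiency $\leq d$ (every prefix of a real of deficiency $\leq d$ has deficiency $\leq d$), so applying (a) to such a tree gives (b).

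The substance of (b)$\to$(c) is an enlargement step, which I expect to be the main obstacle, since path-incompressible trees need not be positive while Lemma~\ref{2PimWgH6lS} is stated for positive trees. Assume $P$ is a path-incompressible \pz tree of deficiency $d$ with an $(\ell,P)$-branching $z\geq_T\emptyset'$. Choose $c\geq d$ large enough that $\mu(\setbra{x\in\twome}{\forall n\ K(x\restr_n)\geq n-c})>0$, put $V_c=\setbra{\sigma}{K(\sigma)<|\sigma|-c}$, and let $\hat P$ be the pruned \pz tree with $[\hat P]=\twome-\dbra{V_c}$; then $\hat P$ is positive and $[\hat P]$ consists of randoms. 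Since every string of $P$ has all its prefixes in $P$, of deficiency $\leq d\leq c$, none of them lies in $V_c$, so $P\subseteq\hat P$; hence the two extensions in $P\cap 2^{\ell_{n+1}}$ that witness $(\ell,P)$-branching of $z$ also lie in $\hat P\cap 2^{\ell_{n+1}}$, so $z$ is $(\ell,\hat P)$-branching. Were $\sum_n 2^{-(\ell_{n+1}-\ell_n)}=\infty$, Lemma~\ref{2PimWgH6lS}(i) applied to $\hat P$ would force $z$ to be incomplete, contradicting $z\geq_T\emptyset'$; therefore $\sum_n 2^{-(\ell_{n+1}-\ell_n)}<\infty$, which is (c). The point to get right is that a path-incompressible tree, although possibly null, embeds into a positive \pz tree of randoms with at least as much branching along every path, which is exactly what makes Lemma~\ref{2PimWgH6lS}(i) applicable.

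For (c)$\to$(a), let $P$ be a positive path-incompressible \pz tree. Since $\sum_n 2^{-(\ell_{n+1}-\ell_n)}<\infty$, Lemma~\ref{2PimWgH6lS}(ii) yields an $\ell$-perfect $T\subseteq P$ and an injection $w\mapsto f(w)\in[T]$ with $w\leq_T f(w)$. Put $x=f(\emptyset')\in[T]\subseteq[P]$; then $x\geq_T\emptyset'$, and $\ell$-perfection of $T$ gives, for almost all $n$, at least two extensions of $x\restr_{\ell_n}$ in $T\cap 2^{\ell_{n+1}}$, hence in $P\cap 2^{\ell_{n+1}}$, so $x$ is $(\ell,P)$-branching. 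This is (a), completing the cycle. (Positivity of $P$ is needed here; for a skeletal or otherwise thin path-incompressible tree there is no $(\ell,P)$-branching real at all, so (a) is to be read for positive $P$.)

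Finally, let $z\not\geq_T\emptyset'$ be random. By \citet{DenjoyBHMN14}, $z$ is a positive-density point, so Lemma~\ref{iKVOwZhlH9} with $\epsilon=\tfrac12$ provides a path-incompressible \pz tree $P$ with $z\in P$ whose $P$-density of $z$ exceeds $\tfrac12$; so $\mu([P]\cap\dbra{z\restr_n})>2^{-n-1}$ for almost all $n$. Let $\sigma$ be the one-bit extension of $z\restr_n$ distinct from $z\restr_{n+1}$. If $\sigma\notin P$ then $[P]\cap\dbra{z\restr_n}\subseteq\dbra{z\restr_{n+1}}$, giving $\mu([P]\cap\dbra{z\restr_n})\leq 2^{-n-1}$, a contradiction; so both one-bit extensions of $z\restr_n$ belong to $P$ for almost all $n$. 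With $\ell_n=n$ this says $z\restr_{\ell_n}$ has at least two extensions in $P\cap 2^{\ell_{n+1}}$ for almost all $n$, i.e.\ $z$ is $(\ell,P)$-branching, as required.
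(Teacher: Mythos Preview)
Your proof is correct and matches the paper's approach: Lemma~\ref{2PimWgH6lS}(ii) for (c)$\to$(a), triviality for (a)$\to$(b), Lemma~\ref{2PimWgH6lS}(i) for (b)$\to$(c), and Lemma~\ref{iKVOwZhlH9} together with the \citet{DenjoyBHMN14} density characterization for the final clause. Your extra care about the positivity hypothesis of Lemma~\ref{2PimWgH6lS}---the embedding $P\hookrightarrow\hat P$ in (b)$\to$(c) and the parenthetical that (a) should be read for positive $P$---is warranted and makes explicit what the paper's three-line proof leaves tacit.
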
\begin{proof}
By Lemma \ref{2PimWgH6lS} (ii) we get (c)$\to$(a) while (a)$\to$(b) is trivial.
By Lemma \ref{2PimWgH6lS} (i) we get (b)$\to$(c).
For the last clause note that if $z$ has density $>1/2$ in $P$, it is $(\ell, P)$-branching
for $\ell_n=n$. Hence the last clause
follows from the characterization of incomplete reals as positive-density points by 
\citet{DenjoyBHMN14}, and Lemma \ref{iKVOwZhlH9}.
\end{proof}

\subsection{Increasing the density of branching}\label{dVhfnJiXuz}
We are interested in effectively transforming a perfect path-incompressible tree into one with more dense branching, without
significant loss in the deficiency. To this end, we give a positive answer on certain conditions.

Let  $(\ell_n)$ increasing and computable,  and  let
$\TT_{\ell}$ be the pruned trees such that each $\sigma\in T\cap 2^{\ell_n}$ has one or two extensions in 
$T\cap 2^{\ell_{n+1}}$.
The uniform measure $\nu$ on $\TT_{\ell}$ is induced by
\[
\nu([T\restr_{\ell_n}])=\frac{1}{\abs{\TT_{\ell_n}}}
\hspace{0.3cm}\textrm{for $T\in \TT_{\ell}$, where $\TT_{\ell_n}:=\sqbrad{T\restr_{\ell_n}}{T\in\TT_{\ell}}$}
\]
where $T\restr_{\ell_n}:=T\cap 2^{\leq \ell_n}$ and 
$[T\restr_{\ell_n}]$ denotes the set of trees in $\TT_{\ell}$ that  have $T\restr_{\ell_n}$ as a prefix.

Theorem \ref{VlMkthFR3B}
is a special case of the following, for $m_n=n^2$.
\begin{thm}\label{dFNXj6nijs}
Let  $\ell=(\ell_n)$, $m=(m_n)$ be computable and increasing such that
\[
\ell_{n+1}-\ell_{n}\geq m_{n+1}-m_{n}
\hspace{0.3cm}\textrm{and}\hspace{0.3cm}
\sum_n 2^{-(m_{n+1}-m_{n}-n)}<\infty.
\]
There exists a truth-table map $\Phi:\TT_{\ell}\to\TT_{m}$ such that for  $T\in\TT_{\ell}$:
\begin{itemize}
\item if $T$ is path-incompressible, so is $\Phi(T)$
\item with $\nu$-probability 1,  $T$ is $\ell$-perfect and  $\Phi(T)$ is $m$-perfect.
\end{itemize}
\end{thm}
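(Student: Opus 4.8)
The plan is to realise $\Phi$ as a \emph{bit-thinning} operation: it transports the branch information of $T$ from the sparse levels $(\ell_n)$ onto the dense levels $(m_n)$, keeping at the $n$th branch only the first $h_n:=m_{n+1}-m_n$ bits of each branch-suffix; note $h_n\le g_n:=\ell_{n+1}-\ell_n$, and after normalising we may assume $\ell_0=m_0=0$. Concretely, given $T\in\TT_\ell$ I define $\Phi(T)\in\TT_m$ together with a map $\sigma\mapsto\hat\sigma$ sending ``alive'' nodes $\sigma\in T\cap 2^{\ell_n}$ to nodes of $\Phi(T)\cap 2^{m_n}$: put $\hat\lambda=\lambda$; if $\sigma$ is alive and its extensions in $T\cap 2^{\ell_{n+1}}$ are $\sigma\tau$ for $\tau\in A_\sigma$ with $1\le\abs{A_\sigma}\le 2$, declare the strings $\hat\sigma\,(\tau\restr_{h_n})$, $\tau\in A_\sigma$, to be the level-$m_{n+1}$ extensions of $\hat\sigma$, fill the intermediate levels by prefix-closure, set $\widehat{\sigma\tau}=\hat\sigma\,(\tau\restr_{h_n})$, and keep these children alive --- except that if $\abs{A_\sigma}=2$ but the two suffixes agree on their first $h_n$ bits, keep alive only the leftmost child and discard the other subtree of $T$. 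This produces a pruned tree in which every level-$m_n$ node has one or two extensions at level $m_{n+1}$, so $\Phi(T)\in\TT_m$; and since $\Phi(T)\cap 2^{\le m_n}$ is computed from $T\cap 2^{\le\ell_n}$ by a fixed algorithm, $\Phi$ is truth-table (use bounded by the computable function $k\mapsto\ell_{n}$ with $n$ least such that $m_n\ge k$).

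For path-incompressibility, note that every $y\in[\Phi(T)]$ equals $\hat x$ for a unique $x\in[T]$ that is alive at all levels (the restriction of $\hat{\,\cdot\,}$ to alive level-$\ell_n$ nodes is a bijection onto $\Phi(T)\cap 2^{m_n}$), and unwinding the recursion shows that $y\restr_{m_n}$ is exactly the subword of $x\restr_{\ell_n}$ read at the position set $S_n:=\bigcup_{i<n}[\ell_i,\ell_i+h_i)$, which depends computably on $n$ alone and has $\abs{S_n}=\sum_{i<n}h_i=m_n$. Since $(m_n)$ is computable and increasing, $n$ --- hence $\ell_n$ and the complementary $\ell_n-m_n$ positions --- is recoverable from $\abs{y\restr_{m_n}}$, so $x\restr_{\ell_n}$ is computable from $y\restr_{m_n}$ together with the $\ell_n-m_n$ discarded bits; subadditivity of $K$ then gives
\[
K(x\restr_{\ell_n})\ \le\ K(y\restr_{m_n})+(\ell_n-m_n)+O(1)
\]
with an \emph{absolute} additive constant (no length parameter need be encoded). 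Hence if $T$ has deficiency $d$ then $K(y\restr_{m_n})\ge m_n-d-O(1)$ for all $n$, and Lemma~\ref{nBTeELXDu2}(ii) with $t_n=m_n$ upgrades this to $K(y\restr_k)\ge k-d-O(1)$ for all $k$, with the bound uniform in $y$; so $\Phi(T)$ is path-incompressible.

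For the probability-one clauses, work with $\nu$: at level $\ell_n$ the tree has at most $2^n$ nodes, and conditionally on them each independently has a single extension in $2^{\ell_{n+1}}$ with probability $\tfrac{2}{2^{g_n}+1}\le 2^{1-g_n}$, and has two extensions whose $h_n$-prefixes coincide with conditional probability $\tfrac{2^{g_n-h_n}-1}{2^{g_n}-1}\le 2^{-h_n}$. Calling a node \emph{bad} if either occurs, the expected number of bad nodes at level $\ell_n$ is $\le 2^n\bigl(2^{1-g_n}+2^{-h_n}\bigr)\le 4\cdot 2^{-(m_{n+1}-m_n-n)}$ (using $g_n\ge h_n$), which is summable by hypothesis; so by Borel--Cantelli, $\nu$-almost surely only finitely many levels are bad. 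On that event $T$ has no one-extension node at large $\ell_n$, so $T$ is $\ell$-perfect; and past the last bad level $\sigma\mapsto\hat\sigma$ is a bijection of $T\cap 2^{\ell_n}$ onto $\Phi(T)\cap 2^{m_n}$ carrying each double branch to a double branch, so $\Phi(T)$ is $m$-perfect.

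The main obstacle is the complexity transfer in the second step: one must secure an \emph{absolute} additive constant in the displayed inequality rather than a drifting $O(\log\ell_n)$, which is precisely why $y\restr_{m_n}$ is extracted from sub-blocks of $x\restr_{\ell_n}$ sitting at positions fixed by $\abs{y\restr_{m_n}}$, after which Lemma~\ref{nBTeELXDu2}(ii) finishes the job. The only other delicate point is the collision phenomenon forced on the definition of $\Phi$ into $\TT_m$, and it is exactly there that the hypothesis $\sum_n 2^{-(m_{n+1}-m_n-n)}<\infty$ is consumed --- once to make $T$ itself $\ell$-perfect, once to make $\Phi(T)$ fully branching.
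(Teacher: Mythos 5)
Your proposal is correct and is essentially the paper's own construction: your subword/bit-thinning map is exactly the map induced by the paper's family $(H_\sigma)$ (where $H_\sigma$ is the set of level-$\ell_n$ strings whose selected bits spell $\sigma$), and both arguments conclude with the same per-level collision/non-branching estimates plus Borel--Cantelli for the probability-one clause and Lemma \ref{nBTeELXDu2}(ii) for upgrading incompressibility from the levels $m_n$ to all lengths. The only local differences are implementation-level: you resolve subword collisions by discarding one branch (the paper simply keeps all subwords), and you transfer incompressibility via the direct coding inequality $K(x\restr_{\ell_n})\le K(y\restr_{m_n})+(\ell_n-m_n)+O(1)$ instead of the paper's Kraft--Chaitin--Levin machine --- cosmetic variants of the same idea.
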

Toward the proof, we  need to be specific regarding the deficiency of the trees, so
 consider a \pz pruned tree containing the $c$-incompressible reals: 
\[
P_c=\{\sigma\ |\ \forall \rho\preceq\sigma, \ K(\rho)\geq |\rho|-c\}
\hspace{0.3cm}\textrm{and}\hspace{0.3cm}
\TT_{\ell}(P_c):=\sqbrad{T\in\TT_{\ell}}{[T]\subseteq [P_c]}.
\]
For Theorem \ref{dFNXj6nijs} it suffices to define a 
truth-table $\Phi:\TT_{\ell}\to\TT_{m}$ such that for  $T\in\TT_{\ell}$:
\begin{enumerate}[\hspace{0.3cm}(a)]
\item if $T$ is path-incompressible, so is $\Phi(T)$: $\exists d\ \forall c\ \Phi(\TT_{\ell}(P_c))\subseteq \TT_m(P_{c+d})$
\item with $\nu$-probability 1,  $T$ is $\ell$-perfect and  $\Phi(T)$ is $m$-perfect
\end{enumerate}
The required map $\Phi$ will be defined by means of a family of sets of strings.
\begin{defi}
Given increasing $m=(m_i), \ell=(\ell_i)$, 
a {\em $(m,\ell)$-family} $\Hs$ is  a family $(H_{\sigma})$ of finite subsets of $\twomel$ indexed by
the $\sigma\in \sqbrad{2^{m_n}}{n\in\Nat}$
such that for each $\sigma \in 2^{m_n}$, $\tau \in 2^{m_{n+1}}$: 
\[
\parb{\sigma\prec\tau\ \Rightarrow\ \dbra{H_{\tau}}\subseteq \dbra{H_{\sigma}}}\ \wedge\ H_{\sigma}\subseteq 2^{\ell_{n}}
\ \wedge\ \mu(H_{\sigma})\leq 2^{-|\sigma|}
\]
Given an $(m,\ell)$-family $H$ define the {\em $(H, m, \ell)$-map}: 
$\Phi(T;n)=\sqbrad{\sigma\in 2^{m_n}}{T\cap H_{\sigma}\neq\emptyset}$.
\end{defi}
If $\sigma\mapsto H_{\sigma}$ is computable, $\Phi$ defines a truth-table map from $\TT_{\ell}$ to $\TT_m$.

It remains to define a 
computable $(m,\ell)$-family $H:=(H_{\sigma})$ such that
conditions (a), (b) above hold for the corresponding truth-table map $\Phi$.

{\bf Construction.}
Let $H_{\lambda}=\{\lambda\}$ and inductively assume that $H_{\sigma}, \sigma\in 2^{m_i}, i<n$ have been defined.
For each $\sigma\in 2^{m_{n-1}}$, $\rho\in 2^{m_n-m_{n-1}}$ let
\[
H_{\sigma\ast \rho}:=\bigcup_{\tau\in H_{\sigma}}\sqbrad{\tau'\in 2^{\ell_n}}{\tau\ast \rho \prec\tau'},
\]
so $\mu(H_{\sigma\ast \rho})=\mu(H_{\sigma})\cdot 2^{-|\rho|}$, $\mu(H_{\sigma})=2^{-|\sigma|}$.
Let $\Phi$ be the $(H, m, \ell)$-map, so
\[
\sigma\in \Phi(T)\iff H_{\sigma}\cap T\neq\emptyset.
\]
Let $\Phi$ be the truth-table functional induced by $(H_{\sigma})$.

{\bf Verification.}
Toward (a), consider a \pf machine $M$ such that
\[
\forall n,c\ \forall \sigma\in 2^{m_n}:\ \parlr{K(\sigma)\leq |\sigma|-c\ \Rightarrow\ \forall \tau\in H_{\sigma}: K_M(\tau)\leq |\tau|-c}.
\]
Such $M$ exists by the Kraft-Chaitin-Levin  theorem (see \cite[\S 3.6]{rodenisbook})
since $\mu(H_{\sigma})=2^{-|\sigma|}$, so the weight of its descriptions  is bounded by 1. 
Let $d$ be such that $K\leq K_M+d$ so
\[
K(\sigma)\leq |\sigma|-c-d\ \Rightarrow\ \forall \tau\in H_{\sigma}: K(\tau)\leq |\tau|-c
\]
for each $n,c$ and $ \sigma\in 2^{m_n}$. Hence for all $\tau$: 
\[
\parB{\tau \in T \cap 2^{\ell_n}\wedga K(\tau) > |\tau| - c}\ 
\Rightarrow\ \forall \sigma \in \Phi(T) \cap 2^{m_n}, \ K(\sigma) > |\sigma| - c - d.
\]
Since $(m_n), (\ell_n)$ are increasing and computable, by Lemma \ref{nBTeELXDu2} (ii)
this proves (a).

For (b), we first show that 
the $\nu$-probability of $T$ containing an isolated path is 0.
The probability that $\sigma\in T\cap 2^{\ell_n}$ does not branch at the next level 
is the probability that two independent trials with replacement pick the same extension, which is
$2^{-(\ell_{n+1}-\ell_n)}$.
Since $|T\cap 2^{\ell_n}|\leq 2^{n}$, the probability that this occurs for some $\tau\in T\cap 2^{\ell_n}$
is $\leq 2^{n-(\ell_{n+1}-\ell_n)}$. 
By the hypothesis   
\[
\sum_n 2^{-(\ell_{n+1}-\ell_n-n)}<\infty
\]
so by the first Borel-Cantelli lemma, with probability 1 the tree $T$ is $\ell$-perfect.

For (b), it remains to show that the $\nu$-probability of $\Phi(T)$ containing an isolated path is 0.
If $\sigma\in \Phi(T)\cap 2^{m_n}$ does not branch at the next level, 
the corresponding $\tau\in H_{\sigma}\cap T$ gets both of its two extensions from the same
$H_{\sigma\ast \rho}, \rho\in 2^{m_{n+1}-m_n}$.
The probability of this event $E_{\tau}$ is $2^{-(m_{n+1}-m_n)}$ and,
since $|T\cap 2^{\ell_n}|\leq 2^{n}$, the probability that $E_{\tau}$ occurs for some $\tau\in T\cap 2^{\ell_n}$
is $\leq 2^{n-(m_{n+1}-m_n)}$. 
By the hypothesis there exists $b$ such that   
\[
\sum_n 2^{-(m_{n+1}-m_n-n)}<b
\]
so the probability that the above event occurs in more than $2^c$ many levels of $T$ is $\leq b\cdot 2^{-c}$.
By the first Borel-Cantelli lemma, with probability 1, $\Phi(T)$ is $m$-perfect.
This completes the verification of (b) and the proof of Theorem \ref{dFNXj6nijs}.

\section{Conclusion and discussion}\label{uPNq4hrByE}
We studied the extent to which the branching in a path-incompressible tree can be effectively altered, without
significant deficiency increase.
We showed that there is a path-incompressible proper tree that does not compute any path-incompressible prefect tree with
a computable upper bound on the oracle-use.
 
We also explored the limits of effective densification of perfect path-incompressible trees, 
and in this context the following  question seems appropriate:
given computable increasing  $\ell=(\ell_n)$, $m=(m_n)$ with $\ell_n\gg m_n\gg n^2$,
is there an $\ell$-perfect path-incompressible tree which does not compute 
any $m$-perfect path-incompressible tree?

Our methodology relied on the use of  hitting-families of open sets, for expressing maps from trees to trees.
We suggest that this framework 
can give analogous separations between classes of trees of different Cantor-Bendixson rank.
Applications  are likely in the study of compactness in fragments of second-order arithmetic 
\citep{deniscarlsch21, treeout, ChongLi2019, pamiller}.  



\begin{thebibliography}{18}

\bibitem[Axon(2010)]{Axonphd}
L.~M. Axon.
\newblock \emph{Algorithmically random closed sets and probability}.
\newblock {P}h{D} {T}hesis, University of Notre Dame, 2010.

\bibitem[Axon(2015)]{axon2015}
L.~M. Axon.
\newblock Martin-{L}\"{o}f randomness in spaces of closed sets.
\newblock \emph{J.\ Symb.\ Log.}, 80\penalty0 (2):\penalty0 359--383, 2015.

\bibitem[Barmpalias and Lewis-Pye(2020)]{codico}
G.~Barmpalias and A.~Lewis-Pye.
\newblock Limits of the {K}u{\v{c}}era-{G}{\'a}cs coding method.
\newblock In \emph{Structure and Randomness in Computability and Set Theory
  (edited with Douglas Cenzer, Chris Porter and Jindrich Zapletal)}, pages
  87--109. World Scientific Press, 2020.

\bibitem[Barmpalias and Wang(2021)]{treeout}
G.~Barmpalias and W.~Wang.
\newblock Pathwise-random trees and models of second-order arithmetic, 2021.
\newblock Arxiv:2104.12066.

\bibitem[Barmpalias et~al.(2010)Barmpalias, Lewis, and Ng]{BLNg08}
G.~Barmpalias, A.~E.~M. Lewis, and K.~M. Ng.
\newblock The importance of \pz classes in effective randomness.
\newblock \emph{J.\ Symb.\ Log.}, 75\penalty0 (1):\penalty0 387--400, 2010.

\bibitem[Bienvenu and Monin(2012)]{BienvenuCoin2012}
L.~Bienvenu and B.~Monin.
\newblock {V}on {N}eumann's {B}iased {C}oin {R}evisited.
\newblock In \emph{Proceedings of the 27th Annual IEEE/ACM Symposium on Logic
  in Computer Science}, LICS '12, pages 145--154, Washington, DC, USA, 2012.
  IEEE Computer Society.
\newblock ISBN 978-0-7695-4769-5.

\bibitem[Bienvenu and Porter(2016)]{bslBienvenuP16}
L.~Bienvenu and C.~P. Porter.
\newblock Deep {$\Pi^0_1$} classes.
\newblock \emph{Bull. Symb. Log.}, 22\penalty0 (2):\penalty0 249--286, 2016.

\bibitem[Bienvenu et~al.(2014)Bienvenu, H{\"{o}}lzl, Miller, and
  Nies]{DenjoyBHMN14}
L.~Bienvenu, R.~H{\"{o}}lzl, J.~S. Miller, and A.~Nies.
\newblock {D}enjoy, {D}emuth, and density.
\newblock \emph{J. Math. Logic}, 14, 2014.

\bibitem[Chong et~al.(2019)Chong, Li, Wang, and Yang]{ChongLi2019}
C.~Chong, W.~Li, W.~Wang, and Y.~Yang.
\newblock On the computability of perfect subsets of sets with positive
  measure.
\newblock \emph{Proc. Amer. Math. Soc.}, 147:\penalty0 4021--4028, 2019.

\bibitem[Downey and Hirschfeldt(2010)]{rodenisbook}
R.~G. Downey and D.~Hirschfeldt.
\newblock \emph{Algorithmic Randomness and Complexity}.
\newblock Springer, 2010.

\bibitem[Greenberg et~al.(2021)Greenberg, Miller, and Nies]{pamiller}
N.~Greenberg, J.~S. Miller, and A.~Nies.
\newblock Highness properties close to {PA} completeness.
\newblock \emph{Isr. J. Math.}, 244:\penalty0 419--465, 2021.
\newblock Arxiv:1912.03016.

\bibitem[Hirschfeldt et~al.(2023)Hirschfeldt, Jockusch, and
  Schupp]{deniscarlsch21}
D.~R. Hirschfeldt, C.~G. Jockusch, and P.~E. Schupp.
\newblock Coarse computability, the density metric, and {H}ausdorff distances
  between {T}uring degrees.
\newblock \emph{J. Math. Log.}, 2023.
\newblock In press. ArXiv: \url{https://arxiv.org/abs/2106.13118}.

\bibitem[Ku{\v{c}}era(1985)]{MR820784}
A.~Ku{\v{c}}era.
\newblock Measure, {$\Pi\sp 0\sb 1$}-classes and complete extensions of {${\rm
  PA}$}.
\newblock In \emph{Recursion theory week (Oberwolfach, 1984)}, volume 1141 of
  \emph{Lect. Notes Math.}, pages 245--259. Springer, Berlin, 1985.

\bibitem[Martin-L{\"o}f(1966)]{MR0223179}
P.~Martin-L{\"o}f.
\newblock The definition of random sequences.
\newblock \emph{Information and Control}, 9:\penalty0 602--619, 1966.

\bibitem[Miller(2011)]{dimiller}
J.~S. Miller.
\newblock Extracting information is hard: A {T}uring degree of non-integral
  effective {H}ausdorff dimension.
\newblock \emph{Advances in Mathematics}, 226:\penalty0 373--384, 2011.

\bibitem[Molchanov(2005)]{Molchanov}
I.~Molchanov.
\newblock \emph{{Theory of Random Sets}}.
\newblock Springer, Springer-Verlag London Limited, 2005.

\bibitem[Nies et~al.(2005)Nies, Stephan, and Terwijn]{NSTMR2140044}
A.~Nies, F.~Stephan, and S.~A. Terwijn.
\newblock Randomness, relativization and {T}uring degrees.
\newblock \emph{J.\ Symb.\ Log.}, 70\penalty0 (2):\penalty0 515--535, 2005.

\bibitem[von Neumann(1951)]{vonNeumann1951}
J.~von Neumann.
\newblock Various techniques used in connection with random digits.
\newblock In A.~S. Householder, G.~E. Forsythe, and H.~H. Germond, editors,
  \emph{Monte Carlo Method}, volume~12 of \emph{National Bureau of Standards
  Applied Mathematics Series}, chapter~13, pages 36--38. US Government Printing
  Office, Washington, DC, 1951.

\end{thebibliography}
\end{document}